\theoremstyle{definition}
\newtheorem{dfn}{Definition}[section]
\newtheorem{rem}[dfn]{Remark}
\newtheorem{ex}[dfn]{Example}
\theoremstyle{plain}
\newtheorem{prop}[dfn]{Proposition}
\newtheorem{lem}[dfn]{Lemma}
\newtheorem{thm}[dfn]{Theorem}
\newtheorem{cor}[dfn]{Corollary}
\newtheorem{cond}[dfn]{Condition}
\numberwithin{equation}{section}
\newcommand{\ms}[1]{\mathscr{#1}}
\newcommand{\mb}[1]{\mathbb{#1}}
\newcommand{\localslope}{|\partial^- f|}
\newcommand{\localslopeu}{\localslope \circ u}
\newcommand{\cmsp}{\text{CMS}_p([0, a); u_0)}
\newcommand{\pcms}[2]{\text{CMS}_{#1}([0, {#2}); u_0)}
\newcommand{\cmspinfty}{\text{CMS}_p([0, +\infty); u_0)}
\newcommand{\params}{\bm{s}}
\newcommand{\paramt}{\bm{t}}
\newcommand{\tu}{\tilde{u}}
\newcommand{\ta}[1]{T^\ast_{#1}}
\newcommand{\sap}[2]{S_{{#1}\to{#2}}^\ast}
\newcommand{\sa}{S^\ast}
\newcommand{\ut}{\tilde{u}}
\newcommand{\cldomain}{\overline{D(f)}}
\newcommand{\evilam}{\text{EVI}_\lambda}
\newcommand{\ubar}{\bar{u}}
\setlist[enumerate]{topsep=0pt,itemsep=-1ex,partopsep=1ex,parsep=1ex}
\title{Transformation of $p$-gradient flows to $p'$-gradient flows in metric spaces}
\author{Sho Shimoyama}
\date{}
\begin{document}
\maketitle
\begin{abstract}
    We explicitly construct parameter transformations between gradient flows in metric spaces, called curves of maximal slope, having different exponents when the associated function satisfies a suitable convexity condition.
    These transformations induce the uniqueness of gradient flows for all exponents under a natural assumption which is satisfied in many examples.
    We also prove the regularizing effects of gradient flows.
    To establish these results, we directly deal with gradient flows instead of using variational discrete approximations which are often used in the study of gradient flows.
\end{abstract}

\setlength{\cftbeforesecskip}{0pt}
\tableofcontents

\section{Introduction}
The main goal of this study is to explicitly construct parameter transformations from $p$-gradient flows to $p'$-ones in complete metric spaces, for all $p, p' \in (1, +\infty)$.

For a function $f$ on a complete metric space, a $p$-gradient flow, also called \textit{$p$-curve of maximal slope}, is a continuous map $u$ from an interval $[0, a)$ to the space, and is a solution of the evolution equation in the form
\begin{align}
    \label{eq:intro_p_cms}
    (f \circ u)'(t) \leq - \frac{1}{p}|u'|^p(t) - \frac{1}{q} \localslope^q \circ u(t),
\end{align}
where $q$ is the conjugate exponent of $p$, and $|u'|$ and $\localslope$ are the speed of $u$ and the norm of the gradient of $f$ in a metric sense, respectively.
The evolution equation of the form~\eqref{eq:intro_p_cms} arises in several contexts: doubly nonlinear evolution equations on Wasserstein spaces~\cite{Otto1996DoublyDD,Agueh2005,caillet2024doubly}, on Banach spaces~\cite{colli1992,rosmiesav08,Hynd2016ADN}, and application to Rayleigh quotient~\cite{HYND20174873}. 

The question of what properties are consistent and what properties are different between $p$ and $p'$-gradient flows is mostly unclear.
Specifically, our attention is directed towards the uniqueness of $p$-gradient flows.

Muratori and Savar\'e in~\cite{muratori2020108347} showed that a $2$-gradient flow starting from the same point exists uniquely, for suitable convex, called \textit{semi-convex}, functions on many interesting spaces: e.g., $2$-Wasserstein spaces over separable Hilbert spaces; 
connected and complete smooth Riemannian manifolds; complete CAT$(k)$ spaces with $k \in \mb{R}$ which are metric spaces with curvature bounded from above~\cite{burago2001course}; 
locally compact $\text{RCD}(K, \infty)$ spaces with $K \in \mb{R}$ which are metric measure spaces with Ricci curvature bounded from below~\cite{Ambrosio_2014}. 
Furthermore, for such functions and metric spaces, they demonstrated that $2$-gradient flows satisfy regularizing effects including the Lipschitz continuity of the flows.
However, within the same settings, the unique existence and regularizing effects of $p$-gradient flows for $p\neq 2$ have been unrevealed, hindered by the nonlinearity associated with $p \neq 2$.
We also refer to \cite{KELL20162045} in which the uniqueness of $p$-gradient flows is proved for a specific convex function on $p$-Wasserstein spaces when $p \in (1, 2]$.

\subsection*{Results}
Let $(\ms{S}, d)$ be a complete metric space, and $f \colon \ms{S} \to (-\infty, +\infty]$ be lower semicontinuous and proper, i.e., $D(f)\coloneqq \{v \in \ms{S} \mid f(v) < +\infty\} \neq \emptyset$, and let $u_0 \in D(f)$.
We also define $p$-curves of maximal slope as follows:
\begin{dfn}[$p$-curves of maximal slope]
    \label{def:p_cms_in_intro}
    Let $p \in (1,+\infty)$.
    We say that $u \in AC_{loc}([0,a);\ms{S})$ is a \textit{$p$-curve of maximal slope} for $f$ starting from $u_0$ if we have $u(0) = u_0$, $f \circ u$ is non-increasing, and $u$ satisfies
    \begin{align}
        (f \circ u)'(t) \leq -\frac{1}{p} |u'|^p(t) - \frac{1}{q} \localslope^q(u(t)), \quad \ms{L}\text{-a.e. in } t \in [0, a),
    \end{align}
    where $|u'|(t) \coloneqq \lim_{s\to t}\frac{d(u(t), u(s))}{|t - s|}$, and $\localslope (v) \coloneqq \limsup_{w \to v}\frac{\left(f(v) - f(w)\right)^+}{d(v, w)}$ if $v\in D(f)$ and $+\infty$ otherwise.
    We denote by $\pcms{p}{a}$ the set of all $p$-curves of maximal slope for $f$ starting from $u_0$ defined on $[0, a)$.
\end{dfn}

The main goal of our study is the construction of parameter transformations from $p$-curves of maximal slope to $p'$-curves of maximal slope when $f$ satisfies the following convexity:
\begin{dfn}[$(p, \lambda)$-convexity]
    \label{def:p_lambda_convex}
    Let $p \in (1, +\infty)$ and $\lambda \in \mb{R}$.
    We say that a function $f \colon \ms{S} \to (-\infty, +\infty]$ is $(p, \lambda)$-convex if there exists a function $\psi \colon [0, 1] \to [0, 1]$ with $\lim_{t\:\downarrow\:0} \psi(t) = 0$ and $\psi(t) < 1$ at some $t \in (0, 1)$, and for any $v_0, v_1 \in \ms{S}$ there exists a curve $\gamma \colon [0, 1] \to \ms{S}$ from $v_0$ to $v_1$ such that
    \begin{align*}
        d(\gamma_t, \gamma_0) &\leq t d(\gamma_1, \gamma_0), \\
        f(\gamma_t) &\leq (1-t)f(\gamma_0) + tf(\gamma_1) - \lambda t(1-\psi(t))d^p(\gamma_0, \gamma_1) \text{ for every } t \in [0, 1].
    \end{align*}
\end{dfn}
Note that $(p, \lambda)$-convex functions are also $(p, \lambda')$-convex for any $\lambda' \leq \lambda$.
Therefore it is sufficient to consider the case $\lambda \leq 0$ in our results, excluding regularizing effects.

We are now in a position to introduce our main results.
In what follows, we denote by $\ta{u}$ a quantity depending on a $p$-curve of maximal slope $u$ and denote by $\sa_{p\to p'}$ a quantity depending on $p, p' \in (1, +\infty)$ and $u$.
\begin{thm}[Parameter transformation; the case $\lambda = 0$]
    \label{thm:p_cms_to_p_dash_cms_lambda_gt_0}
    Suppose that $f$ is $(p_0, \lambda)$-convex for some $(p_0, \lambda)$ satisfying $\lambda=0$ and $p_0 \in (1, +\infty)$. 
    Let $p\in(1, +\infty)$ and $u \in \pcms{p}{+\infty}$.
    Then for any exponent $p' \in (1, +\infty)$ there exists a non-increasing continuous map $\paramt_{p \to p'} \colon [0, +\infty) \to [0, \ta{u}]$ with
    \begin{enumerate}[nolistsep]
        \item the curve $u_{p'} \coloneqq u \circ \paramt_{p \to p'}$ belongs to $\text{CMS}_{p'}([0, +\infty); u_0)$;
        \item if $v \in \cmspinfty$ is equal to $u$, then we have $\paramt_{p \to p', u} = \paramt_{p \to p', v}$;
        \item the map $\paramt_{p' \to p}$ for $u_{p'}$ satisfies the inverse transformation $u = u_{p'} \circ \paramt_{p' \to p}$.
    \end{enumerate}
\end{thm}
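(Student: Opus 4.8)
The plan is to extract from Definition~\ref{def:p_cms_in_intro} a rigid energy identity that fixes the metric speed in terms of the local slope \emph{independently} of the exponent, and then to change the time variable by an explicit integral whose power is chosen to convert the $p$-speed law into the $p'$-speed law. First I would observe that the defining inequality forces two equalities: for a.e.\ $t$ one has the metric chain-rule bound $(f \circ u)'(t) \geq -\localslope(u(t))\,|u'|(t)$ and Young's inequality $\tfrac1p|u'|^p(t) + \tfrac1q\localslope^q(u(t)) \geq \localslope(u(t))\,|u'|(t)$, and chaining these with the curve-of-maximal-slope inequality shows both must be equalities, whence
\[
  |u'|(t) = \localslope(u(t))^{1/(p-1)}, \qquad (f \circ u)'(t) = -\localslope(u(t))\,|u'|(t) \quad \text{for a.e.\ } t.
\]
Writing $g(t) := \localslope(u(t))$, this is the linchpin: the geometric trajectory of $u$ is exponent-free, and only the speed law $|u'| = g^{1/(p-1)}$ records the value of $p$.

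Next I would construct the time change explicitly. Let $\ta{u} := \inf\{t \geq 0 : g(t) = 0\}$ be the terminal time at which $u$ reaches a critical point, so $g > 0$ on $[0, \ta{u})$, and define the new time as a function of the old one by
\[
  \sap{p}{p'}(t) := \int_0^t g(\tau)^{\frac{1}{p-1} - \frac{1}{p'-1}}\, d\tau, \qquad t \in [0, \ta{u}).
\]
I would set $\paramt_{p \to p'} := (\sap{p}{p'})^{-1}$, continuous and monotone because the integrand has constant sign, and extend it by the constant value $\ta{u}$ once $g$ vanishes, so that its range is $[0, \ta{u}]$. Here I must verify measurability and local integrability of the integrand and analyse $\sap{p}{p'}(t)$ as $t \uparrow \ta{u}$ to pin down the domain (together with the constant extension this is what forces $u_{p'}$ to live on all of $[0,+\infty)$).

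To prove (i), I would use that $\paramt_{p \to p'}$ is monotone and locally absolutely continuous, so the metric speed transforms as $|u_{p'}'|(s) = |u'|(\paramt_{p\to p'}(s))\,\paramt_{p\to p'}'(s)$ for a.e.\ $s$; substituting $|u'| = g^{1/(p-1)}$ and $\paramt_{p\to p'}'(s) = g(\paramt_{p\to p'}(s))^{\frac{1}{p'-1}-\frac{1}{p-1}}$ collapses this to $|u_{p'}'|(s) = \localslope(u_{p'}(s))^{1/(p'-1)}$, which is exactly the $p'$-energy identity; together with $u_{p'}(0) = u_0$ and the monotonicity of $f \circ u_{p'}$ this yields $u_{p'} \in \pcms{p'}{+\infty}$. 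Property (ii) is immediate, since $\sap{p}{p'}$ is built solely from the intrinsic function $t \mapsto g(t)$, so equal curves give equal transformations. For (iii) the decisive observation is that the exponent $\tfrac{1}{p-1} - \tfrac{1}{p'-1}$ is antisymmetric under $p \leftrightarrow p'$: the change of variables $\rho = \sap{p}{p'}(\tau)$, under which the slope along $u_{p'}$ at $\rho$ equals $g(\tau)$, turns $\sap{p'}{p}(\sap{p}{p'}(t))$ into $\int_0^t g(\tau)^0\, d\tau = t$, so $\paramt_{p' \to p}$ inverts $\paramt_{p \to p'}$ and $u_{p'} \circ \paramt_{p' \to p} = u$.

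The main obstacle I anticipate is the degeneracy of the slope $g$. Where $g$ vanishes or tends to $0$, the integrand $g^{\frac{1}{p-1}-\frac{1}{p'-1}}$ is either singular or degenerate according to the sign of $p' - p$, so I must show the time change is well defined, that its range is exactly $[0, \ta{u}]$, and that invertibility survives the flat pieces where $u$ rests at a critical point. Tied to this is the analytic verification that absolute continuity is preserved under the reparametrization and that the metric-speed chain rule holds a.e.\ even when $\paramt_{p\to p'}'$ vanishes or blows up. Once these two points---the behaviour of the time-change integral near $\ta{u}$ and the transfer of absolute continuity---are settled, the algebraic content of (i)--(iii) is essentially forced by the energy identity of the first step.
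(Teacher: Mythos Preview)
Your explicit time change coincides with the paper's (via $|u'|=g^{1/(p-1)}$ your integrand $g^{\frac{1}{p-1}-\frac{1}{p'-1}}$ equals the paper's $|u'|^\alpha$ with $\alpha=1-\tfrac{p-1}{p'-1}$), and the extension by a constant past the critical time is the same idea. But the obstacle you flag in your last paragraph is not peripheral: it is the principal technical content of the theorem, and your plan gives no indication of how to resolve it.

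Concretely, for the forward map to be locally bi-Lipschitz on $(0,\ta{u})$---which is what makes $\paramt_{p\to p'}$ locally absolutely continuous and validates the metric-speed chain rule in your verification of (i)---you need two-sided local bounds $0<c\le g\le M$ on every compact subinterval of $(0,\ta{u})$. The lower bound is comparatively soft: once $g$ hits zero the curve freezes by a short estimate from the global slope formula, and lower semicontinuity of $\localslope$ does the rest (Corollary~\ref{cor:lower_bound_of_local_slope}). The upper bound is the paper's key technical lemma (Lemmas~\ref{lem:convexity_of_f_circ_u_tilde} and~\ref{lem:upper_bound_of_local_slope}): by a contradiction argument exploiting the global slope formula~\eqref{eq:global_formula} and the $(p_0,0)$-convexity of $f$, one proves that $f\circ\tilde u$ is \emph{convex} along the arc-length reparametrisation $\tilde u$, and the local upper bound on $g$ together with the a.e.\ monotonicity of $|u'|$ drop out. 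That monotonicity is also what settles your hardest extension case (new total time finite but $\ta{u}=+\infty$): it forces $|u'|\in L^1([0,+\infty))$, so $u(t)$ converges as $t\to+\infty$ to a point at which $g=0$, and the constant extension of $\paramt_{p\to p'}$ then yields a genuine $p'$-curve on all of $[0,+\infty)$. Without this convexity step nothing in your outline controls $g$ from above, and the argument stalls. A smaller gap: your opening chain-rule bound $(f\circ u)'\ge -g\,|u'|$ already presupposes that $\localslope$ is a strong upper gradient, which is itself a consequence of $(p_0,0)$-convexity (Proposition~\ref{prop:local_slope_is_a_strong_upper_gradient}) that you should invoke explicitly.
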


\begin{thm}[Parameter transformation; the case $\lambda < 0$]
    \label{thm:p_cms_to_p_dash_cms_lambda_st_0}
    Suppose that $f$ is $(p_0, \lambda)$-convex for some $(p_0, \lambda)$ satisfying $\lambda < 0$ and $p_0 \in [2, +\infty)$. 
    Let $p \in (1, p_0]$ and $u \in \pcms{p}{+\infty}$.
    Then for any exponent $p' \in (1, p_0]$ if we have one of the following:
    \begin{enumerate}[label=(\alph*), nolistsep]
        \item $\sap{p}{p'} = +\infty$;
        \item $\sap{p}{p'} < +\infty$ and $\ta{u} < + \infty$;
        \item $\sap{p}{p'} < +\infty$, $\ta{u} = +\infty$ and there exists $\lim_{t \:\uparrow\: \ta{u}}u(t)$;
    \end{enumerate} 
    then there exists a non-increasing continuous map $\paramt_{p \to p'} \colon [0, +\infty) \to [0, \ta{u}]$ with
    \begin{enumerate}[nolistsep]
        \item the curve $u_{p'} \coloneqq u \circ \paramt_{p \to p'}$ belongs to $\text{CMS}_{p'}([0, +\infty); u_0)$;
        \item if $v \in \cmspinfty$ is equal to $u$, then we have $\paramt_{p \to p', u} = \paramt_{p \to p', v}$;
        \item the map $\paramt_{p' \to p}$ for $u_{p'}$ satisfies the inverse transformation $u = u_{p'} \circ \paramt_{p' \to p}$.
    \end{enumerate}
\end{thm}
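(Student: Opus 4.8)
The plan is to exploit the fact that a $p$-curve and a $p'$-curve of maximal slope issuing from the same point should trace out the \emph{same} trajectory and differ only by a time reparametrization governed by the descending slope. I would first make this precise by upgrading the defining inequality to a sharp energy identity. Since $f$ is $(p_0,\lambda)$-convex, $\localslope$ is a strong upper gradient (a fact I expect from the general theory/earlier in the paper), so along $u$ the metric chain rule gives $(f\circ u)'(t)\ge -\localslope(u(t))\,|u'|(t)$ for a.e.\ $t$; combining this with the defining inequality and Young's inequality $\tfrac1p a^p+\tfrac1q b^q\ge ab$ forces all inequalities to be equalities a.e., whence
\begin{align*}
    |u'|(t)=\localslope(u(t))^{\frac{1}{p-1}},\qquad (f\circ u)'(t)=-\localslope(u(t))^{q}.
\end{align*}
This pins down the speed of any $p$-curve purely through the value of the slope at the occupied point, which is the structural input making a reparametrization possible.

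Writing $g(t):=\localslopeu(t)$ and seeking $u_{p'}=u\circ\paramt_{p\to p'}$, the analogous identity $|u_{p'}'|=\localslope(u_{p'})^{1/(p'-1)}$ for the target exponent, together with the reparametrization rule $|u_{p'}'|(s)=|u'|(\paramt_{p\to p'}(s))\,|\paramt'_{p\to p'}(s)|$ valid for monotone changes of variable, determines $\paramt_{p\to p'}$ as the inverse of
\begin{align*}
    \sigma(\tau):=\int_0^{\tau} g(\rho)^{\,\frac{1}{p-1}-\frac{1}{p'-1}}\,\mathrm d\rho .
\end{align*}
I would take $\ta{u}:=\sup\{t:g(t)>0\}$ and $\sap{p}{p'}:=\lim_{\tau\uparrow\ta{u}}\sigma(\tau)$. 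The role of the convexity hypothesis (with $\lambda<0$, $p_0\ge2$ and $p,p'\le p_0$ in the second theorem) is to supply the regularity needed here: $g$ is measurable, stays strictly positive on $[0,\ta{u})$ and decays to $0$ only at $\ta{u}$, because the slope obeys a convexity-driven monotonicity (non-increasing when $\lambda=0$, controlled by an $e^{\lambda t}$-type Gr\"onwall bound when $\lambda<0$). This makes $\sigma$ a continuous strictly monotone bijection onto $[0,\sap{p}{p'})$, so that $\paramt_{p\to p'}:=\sigma^{-1}$ is a well-defined continuous monotone map onto $[0,\ta{u}]$; since $\sigma$ is built only from the intrinsic slope along $u$, property (ii) follows at once, and since $\sigma_{p'\to p}$ carries the reciprocal speed-ratio (the exponent negates) a change of variables shows it inverts $\sigma_{p\to p'}$, yielding (iii).

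It then remains to verify (i), that $u_{p'}$ is a genuine $p'$-curve of maximal slope on all of $[0,+\infty)$. On $[0,\sap{p}{p'})$ this is a direct computation: $u_{p'}$ is locally absolutely continuous, its metric speed equals $\localslope(u_{p'})^{1/(p'-1)}$ by construction, and substituting into the energy identity recovers the $p'$-inequality as an equality. The three cases record how the new time axis is filled. When $\sap{p}{p'}=+\infty$ (case~(a)) the reparametrization already covers $[0,+\infty)$ and nothing further is required. When $\sap{p}{p'}<+\infty$, the reparametrization reaches only new time $\sap{p}{p'}$, and I would extend $u_{p'}$ by the constant value $\lim_{t\uparrow\ta{u}}u(t)$ for $s\ge\sap{p}{p'}$ (equivalently $\paramt_{p\to p'}\equiv\ta{u}$ there); this limit is an equilibrium point at which $\localslope$ vanishes, so the constant extension trivially satisfies the $p'$-inequality and glues continuously. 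The limit exists automatically when $\ta{u}<+\infty$ by continuity of $u$ (case~(b)), and is exactly the hypothesis imposed in case~(c) when $\ta{u}=+\infty$.

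The main obstacle is everything feeding the second paragraph: upgrading the defining inequality to the sharp energy identity in the bare metric setting (which rests on $\localslope$ being a strong upper gradient and on $f\circ u$ being genuinely absolutely continuous, with no singular part) and, above all, controlling $g$ near the extinction time so that the singular integral defining $\sigma$ is meaningful. When $p'<p$ the integrand $g^{1/(p-1)-1/(p'-1)}$ blows up as $g\to0$, and whether $\sap{p}{p'}$ is finite or infinite---precisely the dichotomy separating cases (a)--(c)---hinges on delicate estimates for the decay of the slope along the flow. It is here that the restrictions $p_0\ge2$ and $p,p'\le p_0$ in the $\lambda<0$ theorem should be used, to close the convexity-based differential inequality for $g$ and to guarantee that the reparametrized curve remains absolutely continuous with the asserted metric derivative.
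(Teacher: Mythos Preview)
Your time-change $\sigma(\tau)=\int_0^\tau g(\rho)^{1/(p-1)-1/(p'-1)}\,d\rho$ is exactly the paper's $\tilde{\params}_{p\to p'}(t)=\int_0^t|u'|^\alpha\,dr$ (rewrite $|u'|=g^{1/(p-1)}$ and check $\alpha/(p-1)=1/(p-1)-1/(p'-1)$), the energy-identity upgrade is the paper's Proposition~\ref{prop:p_cms_energy_identity}, and your case split (a)--(c) together with the constant extension past $\sap{p}{p'}$ matches the paper's cases (A)--(D). So the architecture is right.

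The substantive gap is the \emph{upper} bound on $g=\localslopeu$. You invoke an ``$e^{\lambda t}$-type Gr\"onwall bound'' for $g$, but no differential inequality for $g$ follows from $(p_0,\lambda)$-convexity in the bare metric setting, and even formally such a bound would need $u_0\in D(\localslope)$, which is not assumed. This matters not only near the extinction time: without a \emph{local} upper bound for $g$ on each $[T_1,T_2]\subset(0,\ta{u})$ you only know $\sigma$ is continuous and strictly increasing, not locally bi-Lipschitz, so $\paramt_{p\to p'}=\sigma^{-1}$ need not be locally absolutely continuous, and then neither the reparametrization identity $|u_{p'}'|(s)=|u'|(\paramt(s))\,|\paramt'(s)|$ nor the change of variables in the $p'$-energy identity is justified a.e. The paper closes this gap by a different device: it passes to the \emph{arc-length} parametrization $\tilde u$ and proves (Lemma~\ref{lem:convexity_of_f_circ_u_tilde}) that $s\mapsto f\circ\tilde u(s)$ is $(2,-\lambda^-)$-convex on every sub-interval of width $\le 1$, by a contradiction argument using the global slope formula~\eqref{eq:global_formula}; it is precisely in that lemma that $p_0\ge 2$ and $p\le p_0$ are used (to compare $d^{p_0}$ with the arc-length increment). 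This local convexity immediately yields the local upper bound on $g$ (Lemma~\ref{lem:upper_bound_of_local_slope}), after which your outline goes through verbatim. A smaller point in the same vein: in case~(c) you assert the limit is an equilibrium, but this too needs an argument---the paper deduces $|u'|\in L^1(0,\infty)$ from $\sap{p}{p'}<\infty$, the assumed limit, and lower semicontinuity of $f$, and then uses $\liminf|u'|=0$ together with lower semicontinuity of $\localslope$.
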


Note that the condition $\lim_{t\:\uparrow\:\ta{u}} u(t)$ in (c) of Theorem~\ref{thm:p_cms_to_p_dash_cms_lambda_st_0} cannot be dropped.
We will give an example supporting this claim in Example~\ref{ex:assumption_cannot_be_dropped}.

These parameter transformations induce the uniqueness of a $p$-curve of maximal slope.
\begin{cor}[Uniqueness of $p$-curves of maximal slope; the case $\lambda = 0$]
    \label{cor:uniqueness_of_p_cms_lambda_gt_0_intro}
    Suppose that $f$ is $(p_0, \lambda)$-convex for some $(p_0, \lambda)$ satisfying $\lambda = 0$ and $p_0 \in (1, +\infty)$.
    For all $p, p' \in (1, + \infty)$ there exists a canonical bijective map $\Lambda \colon \pcms{p}{+\infty} \to \pcms{p'}{+\infty}$.
    
    In particular, if $\# \pcms{p}{+\infty} = 1$ for an exponent $p \in (1, +\infty)$, then $\# \pcms{p'}{+\infty} = 1$ for all $p' \in (1, +\infty)$.
\end{cor}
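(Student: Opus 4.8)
The plan is to treat Theorem~\ref{thm:p_cms_to_p_dash_cms_lambda_gt_0} as a black box and assemble the asserted bijection from the reparametrisations it produces, noting that under the hypothesis $\lambda = 0$, $p_0 \in (1, +\infty)$ the theorem applies to \emph{every} pair of exponents with no auxiliary condition. Fixing $p, p' \in (1, +\infty)$, I would define
\[
    \Lambda \colon \cmspinfty \to \pcms{p'}{+\infty}, \qquad \Lambda(u) \coloneqq u \circ \paramt_{p \to p', u},
\]
where $\paramt_{p \to p', u}$ is the non-increasing continuous map attached to $u$ by the theorem. Property (i) guarantees that $\Lambda(u)$ really lies in $\pcms{p'}{+\infty}$, so $\Lambda$ takes values in the target; property (ii) guarantees that $\paramt_{p \to p', u}$, and hence $\Lambda(u)$, depends only on the curve $u$ and not on any auxiliary data, so that $\Lambda$ is a genuine, canonically defined map rather than a relation.

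For the inverse I would introduce the analogous map in the opposite direction,
\[
    \Lambda' \colon \pcms{p'}{+\infty} \to \cmspinfty, \qquad \Lambda'(w) \coloneqq w \circ \paramt_{p' \to p, w},
\]
obtained by applying the same theorem with the roles of $p$ and $p'$ exchanged, which is legitimate precisely because the hypothesis on $f$ is symmetric in the two exponents when $\lambda = 0$. The key identities come from property (iii). Applying the theorem to $u \in \cmspinfty$ and setting $u_{p'} = \Lambda(u)$, property (iii) states exactly that $u_{p'} \circ \paramt_{p' \to p, u_{p'}} = u$, i.e.\ $\Lambda'(\Lambda(u)) = u$; running the same argument from an arbitrary $w \in \pcms{p'}{+\infty}$ with $w_p = \Lambda'(w)$ yields $w_p \circ \paramt_{p \to p', w_p} = w$, i.e.\ $\Lambda(\Lambda'(w)) = w$. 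Thus $\Lambda$ and $\Lambda'$ are mutually inverse, so $\Lambda$ is a bijection. Consequently $\cmspinfty$ and $\pcms{p'}{+\infty}$ always have the same cardinality; in particular, if $\# \cmspinfty = 1$ for some $p$, then $\# \pcms{p'}{+\infty} = 1$ simultaneously for all $p' \in (1, +\infty)$, and the empty-set case is handled symmetrically since $\Lambda'$ maps $\pcms{p'}{+\infty}$ into $\cmspinfty$.

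I expect the argument to be essentially formal: the transformations provided by the theorem carry all of the analytic content, and the corollary merely packages them. The one point deserving care — and the closest thing to an obstacle — is confirming that $\Lambda'$ is a genuine two-sided inverse and not merely a left inverse. The identity $\Lambda' \circ \Lambda = \mathrm{id}$ is immediate from property (iii) as stated, whereas $\Lambda \circ \Lambda' = \mathrm{id}$ requires invoking property (iii) in the reverse direction for a generic $w \in \pcms{p'}{+\infty}$. This is justified because property (ii), applied with $p$ and $p'$ exchanged, makes $w \mapsto \paramt_{p' \to p, w}$ single-valued, so the reparametrisation attached to an arbitrary $w$ agrees with the one appearing in (iii) whenever $w = \Lambda(u)$. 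Once this compatibility — where the \emph{canonical} nature of the transformation does the real work — is pinned down, bijectivity and the cardinality statement follow at once.
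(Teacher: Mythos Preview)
Your proposal is correct and follows essentially the same route as the paper: the paper defines $\Lambda(u) \coloneqq u \circ \paramt_{p \to p'}$ using Theorem~\ref{thm:p_cms_to_p_dash_cms_lambda_gt_0} and asserts bijectivity as a direct consequence, and your argument spells out exactly the details (well-definedness from (i)--(ii), two-sided inverse from (iii) applied in both directions) that the paper leaves implicit. Your care in checking that $\Lambda \circ \Lambda' = \mathrm{id}$ requires a second, symmetric application of the theorem is the one nontrivial point, and you handle it correctly.
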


\begin{cor}[Uniqueness of $p$-curves of maximal slope; the case $\lambda < 0$]
    \label{cor:uniqueness_of_p_cms_lambda_st_0_intro}
    Suppose that $f$ is $(p_0, \lambda)$-convex for some $(p_0, \lambda)$ satisfying $\lambda < 0$ and $p_0 \in [2, +\infty)$.
    If there exists an exponent $p \in (1, p_0]$ such that
    \begin{align}
        \label{ass:intro_uniqueness_p_cms_for_all_a}
        \#\pcms{p}{a}=1 \quad \text{for any } a \in (0, +\infty],
    \end{align}
    then for any $p' \in (1, p_0]$ there exists $S^\ast \in [0, +\infty]$
    such that
        $S^\ast =  \sup \{a \in (0, +\infty] \mid \# \pcms{p'}{a} \geq 1\}$
    and there exists a canonical bijective map $\Lambda \colon \pcms{p}{+\infty} \to \pcms{p'}{S^\ast}$.
\end{cor}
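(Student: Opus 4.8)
The plan is to transport the global uniqueness of $p$-curves through the transformation of Theorem~\ref{thm:p_cms_to_p_dash_cms_lambda_st_0} and its inverse. First I would use hypothesis~\eqref{ass:intro_uniqueness_p_cms_for_all_a} with $a=+\infty$ to fix the unique $u$ with $\cmspinfty=\{u\}$, and then the same hypothesis with finite $a$ to record that the unique element of $\pcms{p}{a}$ must be $u|_{[0,a)}$ (a restriction of a $p$-curve being again a $p$-curve); thus every $p$-curve of maximal slope for $f$ from $u_0$, on any interval whatsoever, is a restriction of $u$. This is the only point at which uniqueness on all intervals (rather than merely on $[0,+\infty)$) is used, and it is exactly what lets me later identify back-transformed curves that live on finite intervals.

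Next I would build the image curve and identify $\sa$. Feeding $u$ into the forward construction gives $\paramt_{p\to p'}$ and $u_{p'}=u\circ\paramt_{p\to p'}$. If one of the conditions (a)--(c) of Theorem~\ref{thm:p_cms_to_p_dash_cms_lambda_st_0} holds, that theorem already yields $u_{p'}\in\pcms{p'}{+\infty}$ and I set $\sa:=+\infty$. The one remaining situation is $\sap{p}{p'}<+\infty$, $\ta{u}=+\infty$ and $\lim_{t\:\uparrow\:\ta{u}}u(t)$ nonexistent, which the theorem deliberately excludes; there the same construction still produces $\paramt_{p\to p'}$ onto $[0,\ta{u})$ and a $p'$-curve $u_{p'}$, but only on the finite interval $[0,\sap{p}{p'})$, and I set $\sa:=\sap{p}{p'}$. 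In every case $u_{p'}\in\pcms{p'}{\sa}$, so restricting $u_{p'}$ shows $\pcms{p'}{a}\neq\emptyset$ for all $a\le\sa$ and hence $\sup\{a:\#\pcms{p'}{a}\ge1\}\ge\sa$.

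Then I would establish the two statements that pin down $\sa$ and yield the bijection. For uniqueness, given any $w\in\pcms{p'}{a}$ with $a\le\sa$, I would apply the inverse ($p'\to p$) transformation, valid on finite intervals in the same manner as conclusion~(iii) of the theorem, to obtain a $p$-curve of maximal slope on some interval; by the first step it coincides with the corresponding restriction of $u$, and the canonicity in conclusion~(ii) then forces the reparametrizations attached to $w$ and to $u_{p'}$ to agree, so that transforming forward gives $w=u_{p'}|_{[0,a)}$. Hence $p'$-curves are unique on every admissible interval and $\pcms{p'}{\sa}=\{u_{p'}\}$. For maximality: when $\sa=+\infty$ there is nothing to prove, while in the excluded case any $w\in\pcms{p'}{a}$ with $a>\sa$ would, by the uniqueness just shown, restrict to $u_{p'}$ on $[0,\sa)$; but $u_{p'}(\tau)=u(\paramt_{p\to p'}(\tau))$ with $\paramt_{p\to p'}(\tau)\to\ta{u}$ as $\tau\uparrow\sa$, so $\lim_{\tau\:\uparrow\:\sa}u_{p'}(\tau)$ cannot exist, contradicting the continuity of $w$ at $\sa$. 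Thus $\pcms{p'}{a}=\emptyset$ for $a>\sa$, which gives $\sa=\sup\{a:\#\pcms{p'}{a}\ge1\}$, and $\Lambda\colon u\mapsto u_{p'}$ is the required canonical bijection $\cmspinfty\to\pcms{p'}{\sa}$.

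I expect the main obstacle to be precisely the case omitted from Theorem~\ref{thm:p_cms_to_p_dash_cms_lambda_st_0}, where $\sap{p}{p'}<+\infty$, $\ta{u}=+\infty$ and $u$ fails to converge. Here one must carry out the transformation on the half-open interval $[0,\ta{u})$ without the endpoint-extension step, verify that $u\circ\paramt_{p\to p'}$ is a genuine $p'$-curve of maximal slope on $[0,\sap{p}{p'})$, and then exploit the nonexistence of $\lim_{t\:\uparrow\:\ta{u}}u(t)$ to rule out any continuation beyond $\sa$. The accompanying technical nuisance is the bookkeeping of domains: the inverse transformation has to be applied to $p'$-curves defined on finite intervals, so I would first isolate a finite-interval version of the transformation---proved by the very arguments underlying Theorems~\ref{thm:p_cms_to_p_dash_cms_lambda_gt_0} and~\ref{thm:p_cms_to_p_dash_cms_lambda_st_0}---and check that the composition identities $u=u_{p'}\circ\paramt_{p'\to p}$ survive restriction.
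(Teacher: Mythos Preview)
Your proposal is correct and follows essentially the same strategy as the paper's proof (Corollary~\ref{cor:uniqueness_p_cms_lambda_st_0}): both split into the case where Theorem~\ref{thm:p_cms_to_p_dash_cms_lambda_st_0} applies (giving $S^\ast=+\infty$) and the excluded case $\sap{p}{p'}<+\infty$, $\ta{u}=+\infty$, $\nexists\lim_{t\uparrow\ta{u}}u(t)$ (giving $S^\ast=\sap{p}{p'}$), and both use the finite-interval transformation (Lemma~\ref{lem:param_trans_domain_possibly_bounded}) together with hypothesis~\eqref{ass:intro_uniqueness_p_cms_for_all_a} to identify inverse-transformed $p'$-curves with restrictions of the unique $u$.

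The organizational differences are minor. In case~(ii) the paper proves surjectivity by first verifying that any $v_{p'}\in\pcms{p'}{+\infty}$ itself satisfies condition~(c) of Theorem~\ref{thm:p_cms_to_p_dash_cms_lambda_st_0} (the limit exists because the back-transformed curve agrees with $u$ near the endpoint), and then invokes the full theorem; you instead work uniformly through Lemma~\ref{lem:param_trans_domain_possibly_bounded} on finite intervals. In case~(i) the paper rules out $a>\sap{p}{p'}$ by first showing $\ta{v}=a$ for any $v\in\pcms{p'}{a}$ (otherwise the stationary endpoint would force $\lim u$ to exist) and then invoking the bijection $\Lambda_1$ of Corollary~\ref{cor:existence_of_bijective_map}; your continuity argument at $\sa$ amounts to the same obstruction. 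Your anticipated ``bookkeeping of domains'' is exactly the content of the dual and inverse relations~(iv)--(v) in Lemma~\ref{lem:param_trans_domain_possibly_bounded}, which the paper has already isolated.
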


We give two comments for Corollary~\ref{cor:uniqueness_of_p_cms_lambda_st_0_intro}.
For the one, it is previously known in~\cite[Theorem~4.2]{muratori2020108347} that the assumption~\eqref{ass:intro_uniqueness_p_cms_for_all_a} is satisfied among many concrete spaces and $(2, \lambda)$-convex functions on them. See Section~\ref{subsec:applications} for more details.
For another, the condition $p' \in (1, p_0]$ is essential.
We will give an example such that the $p$-curve of maximal slope is unique for $p\in (1, p_0]$, however, $p$-curves of maximal slope are not unique for $p \in (p_0 , +\infty)$. See Example~\ref{ex:p_cms_are_not_unique_for_p_gt_p_0}.

To prove Theorems~\ref{thm:p_cms_to_p_dash_cms_lambda_gt_0} and \ref{thm:p_cms_to_p_dash_cms_lambda_st_0} we show a local convexity of $f$ along $p$-curves of maximal slope.
This convexity also induces regularizing effects:
\begin{thm}[Regularizing effects]
    \label{thm:regularizing_effects}
    Let one of the following hold:
    \begin{enumerate}[label=(\alph*), nolistsep]
        \item $\lambda\geq0$, $p_0 \in (1, +\infty)$ and $p \in (1, +\infty)$;
        \item $\lambda < 0$, $p_0 \in [2, +\infty)$ and $p \in (1, p_0]$.
    \end{enumerate}
    Suppose that $f$ is $(p_0, \lambda)$-convex.
    Then every $u \in \pcms{p}{a}$ satisfies the following properties:
    \begin{enumerate}[leftmargin=*, nolistsep]
        \item the curve $t \mapsto u(t)$ and the map $t \mapsto f\circ u(t)$ are locally Lipschitz on $(0, a)$, and $u(t) \in D(\localslope)$ for every $t \in (0, a)$;
        \item the right limits
        \begin{align*}
            (f\circ u)'_+(t) \coloneqq \lim_{h\:\downarrow\:0} \frac{f(u(t+h)) - f(u(t))}{h}, |u'_+|(t) \coloneqq \lim_{h\:\downarrow\:0}\frac{d(u(t+h), u(t))}{h}
        \end{align*}
        are exist for every $t \in (0, a)$, satisfy that for every $t \in (0, +\infty)$
        \begin{align}
            \label{eq:regularizing_right_energy_identity}
            (f\circ u)'_+(t) = - \localslopeu(t)|u'_+|(t) = - |u'_+|^p(t) = - \localslope^q\circ u(t),
        \end{align}
        and these are almost right continuous in the sense that there exists $A \subset [0, a)$ with full $\ms{L}$-measure in $[0, a)$ and 
        \begin{align}
            \label{eq:regularizing_right_approximate_cont}
            \lim_{A \ni t' \:\downarrow\: t} \localslopeu(t') = \localslopeu(t) \quad \text{for every } t \in (0, a).
        \end{align}
        In addition, if $u_0 \in D(\localslope)$, then  \eqref{eq:regularizing_right_energy_identity} and \eqref{eq:regularizing_right_approximate_cont} hold at $t=0$.
        Moreover, if $\lambda \geq 0$, then $\localslopeu$ is non-increasing and right continuous on $[0, a)$;

        \item if $\lambda \geq 0$, then we have for all $t_0, t \in [0, a)$ with $t_0 < t$
        \begin{align*}
            \localslope^q(u(t)) \leq \frac{f(u(t_0)) - \inf_{\ms{S}}f}{t - t_0}, \quad
             \frac{1}{q}\localslope^q (u(t)) \leq \frac{f(u(t_0)) - f_{t-t_0}(u(t_0))}{t-t_0},
        \end{align*}
        where $f_t(v) \coloneqq \inf_{w \in \ms{S}}\left\{f(w) + \frac{1}{p t^{p-1}}d^p(v, w) \right\}$ and $q$ is the conjugate exponent of $p$:
        
        \item if $\lambda > 0$, then denoting by $\ubar$ the unique minimizer of $f$ it holds that for any $t \in (0, a)$
        \begin{align*}
                \lambda d^{p_0}(u(t), \ubar)
                \leq f(u(t)) - f(\ubar)
                \leq \frac{1}{q_0 \lambda^{q_0/p_0}} \localslope^{q_0}(u(t)),
        \end{align*}
        where $q_0$ is the conjugate exponent of $p_0$.
        \end{enumerate}
\end{thm}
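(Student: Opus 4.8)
The plan is to derive all four properties from one engine: a monotonicity (or, for $\lambda<0$, a Gronwall-type comparison) for the descending slope $g(t)\coloneqq\localslopeu(t)$ along the flow, which is exactly what the local convexity of $f$ along $u$ (the lemma underlying Theorems~\ref{thm:p_cms_to_p_dash_cms_lambda_gt_0}--\ref{thm:p_cms_to_p_dash_cms_lambda_st_0}) provides, fed into the sharp energy dissipation. First I would record that, since $\localslope$ is a strong upper gradient along $u$ under $(p_0,\lambda)$-convexity, the maximal-slope inequality is in fact an equality: for $\ms{L}$-a.e.\ $t$,
\begin{align*}
    (f\circ u)'(t) = -\localslopeu(t)\,|u'|(t) = -\tfrac{1}{p}|u'|^p(t) = -\tfrac{1}{q}\localslope^q(u(t)),
\end{align*}
so that the equality case of Young's inequality forces $|u'|(t)=g^{q-1}(t)$ a.e., and the energy identity $f(u(s))-f(u(t))=\int_s^t g^q(r)\,dr$ holds for all $s<t$. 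Since the right-hand side is bounded by $f(u(0))-\inf_{\ms{S}}f$, this already gives $g\in L^q_{loc}([0,a))$.

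The core step is to promote the local convexity along $u$ into a comparison for $g$. When $\lambda\ge0$ this yields that $g$ is non-increasing on $(0,a)$; its right continuity follows by combining the a.e.\ identity above with the lower semicontinuity of $\localslope$. When $\lambda<0$ the same argument gives instead a bound $g(t)\le C(s,t)\,g(s)$ on compact subintervals. Either way, integrating the comparison in $s$ over $(t_0,t)$ against $g\in L^q_{loc}$ shows $g(t)<+\infty$ for every $t\in(0,a)$ and that $g$ is locally bounded there; this is exactly $u(t)\in D(\localslope)$, and since $|u'|=g^{q-1}$ is then locally bounded, $u$ is locally Lipschitz, and $f\circ u$ is locally Lipschitz through the energy identity, proving (i). For (ii), the right monotonicity of $g$ guarantees that $g$, and hence $|u'_+|=g^{q-1}$ and $(f\circ u)'_+=-g^q$, admit right limits at every $t\in(0,a)$; right continuity of $g$ along a full-measure set $A$ gives \eqref{eq:regularizing_right_approximate_cont}, and passing the a.e.\ chain-rule identity to the right limit along $A$ upgrades it to the pointwise identity \eqref{eq:regularizing_right_energy_identity}. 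The addendum at $t=0$ when $u_0\in D(\localslope)$ is handled by the same limit argument with the comparison extended to the endpoint.

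For (iii) I would use monotonicity directly: since $g$ is non-increasing,
\begin{align*}
    f(u(t_0))-\inf_{\ms{S}}f \;\ge\; f(u(t_0))-f(u(t)) = \int_{t_0}^t g^q(r)\,dr \;\ge\; (t-t_0)\,g^q(t),
\end{align*}
which is the first bound; for the second, taking $w=u(t)$ in the definition of $f_{t-t_0}$ gives $f_{t-t_0}(u(t_0))\le f(u(t))+\frac{1}{p(t-t_0)^{p-1}}d^p(u(t_0),u(t))$, and estimating $d(u(t_0),u(t))\le\int_{t_0}^t g^{q-1}$ with the monotonicity of $g$ reduces the claim to a scalar inequality in $g^q(t)$. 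Finally, (iv) is a consequence of strong convexity. The lower bound $\lambda d^{p_0}(u(t),\ubar)\le f(u(t))-f(\ubar)$ follows by applying the convexity inequality of Definition~\ref{def:p_lambda_convex} with $\gamma_0=\ubar$, $\gamma_1=u(t)$, using $f(\gamma_s)\ge f(\ubar)$ and letting $s\downarrow0$ (so $\psi(s)\to0$). The upper bound is the induced Polyak--Łojasiewicz estimate: the same convexity inequality yields $f(u(t))-f(w)\le\localslopeu(t)\,d(u(t),w)-\lambda d^{p_0}(u(t),w)$, and optimizing the right-hand side over $w$ (a Young/Legendre duality in the distance variable) produces the stated control of $f(u(t))-f(\ubar)$ by $\localslope^{q_0}(u(t))$.

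The main obstacle I expect is the core comparison step of the second paragraph: extracting genuine pointwise monotonicity and right continuity of $g=\localslopeu$ for general $p\neq2$ from the metric $(p_0,\lambda)$-convexity, rather than a merely a.e.\ statement. The nonlinearity couples $g^{q-1}$ (the speed) and $g^q$ (the dissipation), so the differential inequality for $g$ is no longer the clean $p=2$ one, and care is needed both in the limiting argument (interpolation parameter $\downarrow0$) and in the lower-semicontinuity passage that upgrades the a.e.\ identities to hold at every $t\in(0,a)$.
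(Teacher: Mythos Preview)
Your overall plan---feed a comparison inequality for $g(t)=\localslopeu(t)$ into the energy dissipation to obtain local boundedness of $g$, hence local Lipschitz regularity, then pass right limits---is the right architecture, and your treatment of (iii) and (iv) is essentially correct (the second bound in (iii) is cleanest if, after taking $w=u(t)$, you use the \emph{full} energy identity $f(u(t_0))-f(u(t))=\frac{1}{p}\int|u'|^p+\frac{1}{q}\int g^q$ together with Jensen on $\int|u'|^p$, rather than estimating $d$ by monotonicity of $g$, which goes the wrong way).  One small slip: in your displayed chain the correct identities are $(f\circ u)'=-|u'|^p=-g^q$, not $-\tfrac{1}{p}|u'|^p=-\tfrac1q g^q$; the factors $\tfrac1p,\tfrac1q$ already got absorbed by the equality case of Young.

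The genuine gap is exactly the obstacle you flag in your last paragraph, and the paper resolves it by a device you do not mention: the \emph{arc-length reparametrisation}.  Along the original time one has $(f\circ u)'=-g^q$ and $|u'|=g^{q-1}$, so any convexity statement for $f\circ u$ couples nonlinearly with $g$; this is why your ``differential inequality for $g$'' is unclear.  The paper instead reparametrises $u$ by $s\mapsto \paramt(s)$ with $\paramt'(s)=1/|u'|(\paramt(s))$, so that $\tilde u=u\circ\paramt$ has unit speed and, crucially, $(f\circ\tilde u)'(s)=-\localslope\circ\tilde u(s)$ (linear in the slope).  The local convexity lemma is then proved for $f\circ\tilde u$ on intervals of length $\le1$ (this is where the restriction $p_0\ge2$ for $\lambda<0$ enters), yielding directly
\[
\localslope\circ\tilde u(s')\le\localslope\circ\tilde u(s)+2\lambda^-|s'-s|\quad\text{for a.e.\ }s<s'\text{ with }|s'-s|<1,
\]
which is the clean comparison you want.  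Local boundedness of $\localslope\circ\tilde u$, the almost right continuity, and (for $\lambda\ge0$) pointwise monotonicity all follow from this one-dimensional convexity, and are transferred back to the original parameter $t$ via the local bi-Lipschitz property of $\params$ on $(0,\ta u)$.  Without the arc-length trick, I do not see how to carry out your ``core step'' for general $p$; with it, the nonlinear coupling disappears and everything in (i)--(ii) goes through as you outline.
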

Note that the statement (iv) is already known in~\cite{ambrosioGradientFlowsMetric2008,ohta2023gradient} however we quote it for convenience.
On the other hand, the other statements have been known only in partial cases, such as 
for a $p$-curves of maximal slope which is obtained by variational discrete approximations called \textit{generalized minimizing movements} in~\cite[Theorem~2.4.15]{ambrosioGradientFlowsMetric2008}.

\subsection*{Plan of proofs}
The key facts to prove Theorems~\ref{thm:p_cms_to_p_dash_cms_lambda_gt_0} and \ref{thm:p_cms_to_p_dash_cms_lambda_st_0} are
\begin{enumerate}[nolistsep]
    \item (Corollary~\ref{cor:lower_bound_of_local_slope}) the existence of local positive lower bounds of the slope $\localslope$ along $p$-curves of maximal slope;
    \item (Lemma~\ref{lem:upper_bound_of_local_slope}) the existence of local finite upper bounds of $\localslope$ along $p$-curves of maximal slope.
\end{enumerate}

To show the first point we prove a new fact that the slope $\localslope$ along a $p$-curve of maximal slope remains zero and the curve is stationary once the slope is zero.
See Proposition~\ref{prop:positivity_of_localslope}.

The second point follows from another new fact that $f$ is locally convex along the arclength-parametrization of a $p$-curve of maximal slope, which we proved in Lemma~\ref{lem:convexity_of_f_circ_u_tilde}.
This convexity is derived via a proof by contradiction.
Since this plays a key role, let us give a sketch of the proof.

In order to simplify our arguments, consider the case $\lambda \geq 0$ (i.e., $f$ is convex) and the original parametrization of a $p$-curve of maximal slope $u$.
If $f \circ u$ is not convex, then there exist three points $s_0 < \hat{s} < s_1$ such that $f \circ u(\hat{s})$ is greater than a convex combination $\varphi$ of $f\circ u(s_0)$ and $f \circ u(s_1)$.
Then there exists a point $s_l$ such that $(f \circ u)'(s_l)$ is greater than the slope of $\varphi$.
We take a point $s_r > s_l$ such that the slope of $\psi$, which is a convex combination of $f\circ u(s_l)$ and $f\circ u(s_r)$, is equal to the slope of $\varphi$.
Since $f$ is convex, there exists a curve $\gamma$ from $u(s_l)$ to $u(s_r)$ differ from $u$ such that $f \circ \gamma$ is less than or equal to $\psi$.
Recalling that $(f \circ u)'(s_l)$ is greater than the slope of $\psi$, the value of $f$ decreases more along $\gamma$ than along $u$.
This is a contradiction to the fact that $u$ is a gradient flow, i.e., it represents the steepest descent direction.

Combining the two points (i) and (ii), we establish Theorems~\ref{thm:p_cms_to_p_dash_cms_lambda_gt_0} and \ref{thm:p_cms_to_p_dash_cms_lambda_st_0} by constructing explicit parameter transformations. 
Moreover the local convexity of $f$ proved in Lemma~\ref{lem:convexity_of_f_circ_u_tilde} allows us to utilize the similar techniques used to derive regularizing effects in~\cite{ambrosioGradientFlowsMetric2008,muratori2020108347}, 
realizing Theorem~\ref{thm:regularizing_effects}.

The uniqueness result Corollary~\ref{cor:uniqueness_of_p_cms_lambda_gt_0_intro} (resp. Corollary~\ref{cor:uniqueness_of_p_cms_lambda_st_0_intro}) is a direct consequence of Theorem~\ref{thm:p_cms_to_p_dash_cms_lambda_gt_0} (resp. Theorem~\ref{thm:p_cms_to_p_dash_cms_lambda_st_0}), which is proved in Corollary~\ref{cor:uniqueness_p_cms_lambda_gt_0} (resp. Corollary~\ref{cor:uniqueness_p_cms_lambda_st_0}).

\subsection*{Acknowledgement}
I would like to express my gratitude to Professor Shouhei Honda, my master's program supervisor, for his numerous valuable comments and support.

\section{Preliminaries}
\label{sec:preliminaries}
In this section, we recall some basic notions and results related to the gradient flows in metric spaces, called $p$-curves of maximal slope.
Let $(\ms{S}, d)$ be a complete metric space, and $f \colon \ms{S} \to (-\infty, +\infty]$ be lower semicontinuous and proper, i.e., $D(f) \coloneqq \{v \in \ms{S} \mid f(v) < +\infty\} \neq \emptyset$ throughout this section.
We represent an interval in $\mb{R}$ by $I$, and also call any continuous map from an interval to a metric space a \textit{curve}.
We refer to~\cite{ambrosioGradientFlowsMetric2008, ohta2023gradient} for the details.

Before starting the general setting, firstly, we consider a finite-dimensional Euclidian space $\ms{S} \coloneqq \mb{R}^n$ with a Euclidian distance $d$.
For a smooth function $f \colon \mb{R}^n \to \mb{R}$, we say that a smooth curve $u \colon I \to \mb{R}$ is a $p$-gradient flow for $p\in(1, +\infty)$ if it satisfies the following differential equation
\begin{align}
    \label{eq:grad_flow_1}
    \|u'(t)\|^{p-2} \cdot u'(t) = -\nabla f(u(t)) \quad \text{for each } t \in I.
\end{align}
Equation~\eqref{eq:grad_flow_1} cannot be defined on general metric spaces due to the lack of a linear structure.

On the other hand Young's inequality and chain rule reveals that \eqref{eq:grad_flow_1} holds if and only if the following holds
\begin{align}
    \label{eq:grad_flow_2}
    (f \circ u)'(t) \leq -\frac{1}{p}\|u'(t)\|^p - \frac{1}{q} \|\nabla f(u(t))\|^q \quad \text{for each } t \in I,
\end{align}
where $q$ is the conjugate exponent of $p$.
Note that in~\eqref{eq:grad_flow_2}, without linear structure of $\mb{R}^n$, we can define $\|u'(t)\|$ by
\begin{align*}
    |u'|(t) \coloneqq \lim_{s\to t}\frac{d(u(t), u(s))}{|t - s|} = \|u'(t)\|
\end{align*}
and $\|\nabla f(v)\|$ by 
\begin{align*}
    \localslope(v) \coloneqq \limsup_{\mb{R} \ni w \to v} \frac{\left(f(v) - f(w)\right)^+}{d(v, w)} = \|\nabla f(v)\|,
\end{align*}
where $a^+ \coloneqq \max\{a, 0\}$.
This motivates the subsequent discussion.

\subsection{Absolutely continuous curve}
\begin{dfn}[Absolutely continuous curves]
  Let $p \in [1, +\infty]$.
  We say that $v \colon I \rightarrow \ms{S}$ is a $p$-absolutely (resp. $p$-locally absolutely) continuous curve if there exists $m \in L^p(I)$ (resp. $m \in L^p_{loc}(I))$ such that
  \begin{align}
    \label{eq:absolutely_continuous_curve}
  d(v(s), v(t)) \leq \int_s^t m(r) dr \quad \text{for any } s, t \in I \text{ with } s \leq t.
  \end{align}
  Denoted by $AC^p(I;\ms{S})$ (resp. $AC^p_{loc}(I; \ms{S})$) the set of all $p$-absolutely (resp. $p$-locally absolutely) continuous curves from $I$ to $\ms{S}$.
  In the case $p = 1$, we simply denote it with $AC(I;\ms{S})$ (resp. $AC_{loc}(I; \ms{S})$).
\end{dfn}

Any $p$-absolutely continuous curve is $\ms{L}\text{-a.e.}$ differentiable in the following sense.
See~\cite[Theorem~1.1.2] {ambrosioGradientFlowsMetric2008} for the proof.
\begin{thm}[Metric derivative]
  Let $p \in [1,+\infty]$.
  Then for any curve $v \in AC^p(I;\ms{S})$ (resp. $v \in AC^p_{loc}(I; \ms{S})$) the metric derivative
  \begin{align*}
    |v'|(t) \coloneqq \lim_{s \to t} \frac{d(v(s), v(t))}{|s - t|}    
  \end{align*}
  exists for $\ms{L}$-a.e. in $t \in I$.
  Moreover, the function $t \mapsto |v'|(t)$
  belongs to $L^p(I)$ (resp. $L^p_{loc}(I)$) and it is minimal in the sense:
  $
    |v'|(t) \leq m(t), \ms{L} \text{-a.e. } t \in I    
  $
  for every function $m$ satisfying~\eqref{eq:absolutely_continuous_curve}.
\end{thm}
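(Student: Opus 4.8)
The plan is to reduce this metric statement to classical one-dimensional Lebesgue theory by testing the curve against a countable dense family of points. First I would note that $\overline{v(I)}$, being the closure of the continuous image of an interval, is separable, so I may fix a countable dense set $D \subseteq \overline{v(I)}$. For each $w \in D$ define the scalar function $\varphi_w(t) := d(v(t), w)$. The reverse triangle inequality combined with the admissibility bound~\eqref{eq:absolutely_continuous_curve} gives, for $s \le t$,
\[
    |\varphi_w(s) - \varphi_w(t)| \le d(v(s), v(t)) \le \int_s^t m(r)\, dr,
\]
so each $\varphi_w$ is a real absolutely continuous function; by the Lebesgue differentiation theorem it is differentiable $\ms{L}$-a.e.\ with $|\varphi_w'(t)| \le m(t)$ a.e. Since $D$ is countable, there is a single $\ms{L}$-null set $N$ such that $\varphi_w'(t)$ exists for every $w \in D$ and every $t \in I \setminus N$ simultaneously.

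Next I would define the candidate minimal metric speed
\[
    m^*(t) := \sup_{w \in D} |\varphi_w'(t)|, \qquad t \in I \setminus N,
\]
which is measurable as a countable supremum of measurable functions and satisfies $m^* \le m$ a.e. The crucial elementary identity is
\[
    d(v(s), v(t)) = \sup_{w \in D} |\varphi_w(s) - \varphi_w(t)|,
\]
where the bound ``$\ge$'' for the right-hand side is the reverse triangle inequality and the reverse direction follows from choosing $w \in D$ close to $v(t)$ and using continuity. Integrating each $\varphi_w'$ and taking the supremum then yields $d(v(s), v(t)) \le \int_s^t m^*(r)\, dr$ for $s \le t$, so $m^*$ is itself an admissible function in~\eqref{eq:absolutely_continuous_curve}.

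The identification of the metric derivative with $m^*$ is the heart of the proof, and the main obstacle lies here: a priori the admissibility bound controls only the $\limsup$ of the difference quotient, while the scalar derivatives control only the $\liminf$, and one must force them to coincide. For the upper bound, at every Lebesgue point $t$ of $m^*$ — hence $\ms{L}$-a.e.\ — admissibility gives
\[
    \limsup_{s \to t} \frac{d(v(s), v(t))}{|s-t|} \le \lim_{s \to t} \frac{1}{|s-t|}\Bigl|\int_t^s m^*(r)\, dr\Bigr| = m^*(t).
\]
For the matching lower bound I would use the dense family directly: for each fixed $w \in D$ and $t \in I \setminus N$,
\[
    \frac{d(v(s), v(t))}{|s-t|} \ge \frac{|\varphi_w(s) - \varphi_w(t)|}{|s-t|} \to |\varphi_w'(t)| \quad (s \to t),
\]
so $\liminf_{s \to t} d(v(s), v(t))/|s-t| \ge |\varphi_w'(t)|$; taking the supremum over $w \in D$ gives $\ge m^*(t)$. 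On the intersection of the two full-measure sets the $\limsup$ and $\liminf$ coincide, proving that $|v'|(t)$ exists $\ms{L}$-a.e.\ and equals $m^*(t)$.

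Finally, minimality and integrability are immediate consequences: since $|v'| = m^*$ a.e.\ and $m^* \le m$ a.e.\ for every $m$ satisfying~\eqref{eq:absolutely_continuous_curve}, we obtain both $|v'|(t) \le m(t)$ a.e.\ and $|v'| = m^* \le m \in L^p(I)$, whence $|v'| \in L^p(I)$. The local statement follows verbatim by exhausting $I$ with an increasing sequence of compact subintervals and applying the global argument on each. I expect the squeezing step — trapping the difference quotient between the Lebesgue-point upper bound $m^*(t)$ and the supremum lower bound $m^*(t)$ — to be the delicate point, since it is exactly what simultaneously yields existence of the genuine limit and its identification with the minimal admissible function.
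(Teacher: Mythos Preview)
Your argument is correct and is precisely the standard proof of this result. The paper itself does not supply a proof of this theorem; it simply refers the reader to \cite[Theorem~1.1.2]{ambrosioGradientFlowsMetric2008}, whose proof proceeds exactly along the lines you outline: embed the problem into one-dimensional Lebesgue theory via the countable family $\varphi_w(t)=d(v(t),w)$ indexed by a dense set, define the candidate speed as the countable supremum of the $|\varphi_w'|$, and squeeze the metric difference quotient between the Lebesgue-point upper bound and the pointwise lower bound coming from each $\varphi_w$.
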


\subsection{Local slope, \texorpdfstring{$p$}{p}-curves of maximal slope and upper gradient}
We define the \textit{local slope} $\localslope$ of $f$ by 
  \begin{align*}
  \localslope(v) \coloneqq \begin{cases}
    \limsup_{w \to v} \frac{(f(v) - f(w))^+}{d(v, w)}, & v \in D(f)\\
    +\infty, & v \notin D(f)
  \end{cases}.
  \end{align*}

Recalling \eqref{eq:grad_flow_2}, we provide the following definition:
\begin{dfn}[$p$-curves of maximal slope]
  \label{def:p_cms}
  Let $p \in (1,+\infty)$ and $u_0 \in D(f)$.
  We say that $u \in AC_{loc}([0,a);\ms{S})$ is a \textit{$p$-curve of maximal slope} for $f$ starting from $u_0$ if we have $u(0) = u_0$ and there exists a non-increasing map $\varphi \colon [0, a) \to \mb{R}$ such that it is $\ms{L}$-a.e. equal to $f \circ u$ and satisfies the following differential inequality:
  \begin{align}
  \label{eq:p_cms}
  \varphi'(t) \leq -\frac{1}{p} |u'|^p(t) - \frac{1}{q} \localslope^q(u(t)), \quad \ms{L}\text{-a.e. in } t \in [0, a).
  \end{align}
  We denote by $\cmsp$ the set of all $p$-curves of maximal slope for $f$ starting from $u_0$ defined on $[0, a)$.
\end{dfn}

Note that Definition~\ref{def:p_cms} and Definition~\ref{def:p_cms_in_intro} are the same in our study; see Propositions~\ref{prop:local_slope_is_a_strong_upper_gradient} and~\ref{prop:minimizer_lambda_gt_0} for more details.

When $f$ is a smooth function on a Euclidian space, a non-negative continuous function $g \colon \mb{R}^n \to [0, +\infty)$ satisfies $\|\nabla f\| \leq g$ if and only if it satisfies $|f(v(t)) - f(v(s))| \leq \int_{s}^{t} g(v(r)) \cdot |v'|(r) dr$ for all $v \in AC(I; \ms{S})$ and all $s, t \in I$ with $s \leq t$.
Such function $g$ is said to be an upper gradient.
\begin{dfn}[Strong upper gradient]
  We say that a function $ g : \ms{S} \to [0,+\infty] $ is a \textit{strong upper gradient} for $ f $ if for every $ v \in AC(I;\ms{S}) $ with $v(I) \subset D(f)$ the function $t \mapsto g \circ v(t) $ is Borel and 
  \begin{align}
    \label{eq:sug}
    |f(v(t)) - f(v(s))| \leq \int_s^t g\circ v(r)|v'|(r) dr \quad \text{for any } s, t \in I \text{ with } s \leq t.
  \end{align}
  In particular, if $ g \circ v |v'| \in L^1(I) $, then $ f \circ v $ is absolutely continuous and
  \begin{align*}
    |(f \circ v)'(t)| \leq g\circ v(t)|v'|(t), \quad \ms{L}\text{-a.e. in } t \in I.
  \end{align*}
\end{dfn}

\begin{rem}
    A weak notion of the strong upper gradients, called \textit{weak upper gradients}, are proposed in~\cite[Definition~1.2.2]{ambrosioGradientFlowsMetric2008}; however we do not use this weak notion in our study.
\end{rem}

When the local slope is a strong upper gradient, $p$-curves of maximal slope have nice properties as follows.
\begin{prop}
  \label{prop:p_cms_energy_identity}
  Let $p \in (1, +\infty)$ and the local slope $\localslope$ be a strong upper gradient.
  Then for any $u \in \cmsp$
  we have the following properties:
  \begin{enumerate}
      \item the non-increasing map $\varphi$ as in Definition~\ref{def:p_cms} is equal to $f\circ u$ and it is locally absolutely continuous on $[0, a)$;
      \item the maps $t \mapsto |u'|(t)$, $t \mapsto \localslope \circ u(t)$ belong to $L^p_{loc}([0, a))$, $L^q_{loc}([0, a))$, respectively;
      \item the energy dissipation equation holds: for any $s, t \in [0, a)$ with $s \leq t$
      \begin{align*}
        f \circ u(t) - f\circ u(s) = - \frac{1}{p}\int_{s}^{t} |u'|^p(r)dr - \frac{1}{q} \int_{s}^{t} \localslope^q \circ u(r)dr.
      \end{align*}
  \end{enumerate}
  In particular, we have the energy identity
  \begin{align}
    \begin{split}
    \label{eq:energy_identity}
    &(f\circ u)'(t) = - \localslopeu(t) \cdot |u'|(t)
    = -|u'|^p(t) = -\localslope^q \circ u(t),\\
    &\ms{L}\text{-a.e. in } t \in [0, a).
    \end{split}
  \end{align}
\end{prop}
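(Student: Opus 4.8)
The plan is to prove Proposition~\ref{prop:p_cms_energy_identity} by establishing a chain of inequalities that must collapse to equalities, exploiting the fact that the local slope is a strong upper gradient. The core mechanism is that two a priori distinct estimates for the decrease of $f\circ u$ — one coming from the defining differential inequality \eqref{eq:p_cms}, the other from the strong upper gradient property combined with Young's inequality — bound the same quantity from opposite sides, forcing all inequalities to be sharp.

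First I would record that $u \in \cmsp$ means $u \in AC_{loc}([0,a);\ms{S})$, so by the metric derivative theorem $|u'| \in L^p_{loc}([0,a))$, and \eqref{eq:p_cms} gives $\varphi'(t) \leq -\tfrac{1}{p}|u'|^p(t) - \tfrac{1}{q}\localslope^q(u(t))$ almost everywhere. Since $\varphi$ is non-increasing, it is differentiable $\ms{L}$-a.e. with $\varphi' \in L^1_{loc}$, and integrating the inequality over any $[s,t]$ (using that a non-increasing function satisfies $\varphi(t)-\varphi(s) \leq \int_s^t \varphi'(r)\,dr$) yields
\begin{align}
    \label{eq:plan_upper}
    \varphi(t) - \varphi(s) \leq -\frac{1}{p}\int_s^t |u'|^p(r)\,dr - \frac{1}{q}\int_s^t \localslope^q(u(r))\,dr.
\end{align}
In particular the right-hand integrals are finite on compact subintervals, which already delivers part (ii), namely $\localslopeu \in L^q_{loc}([0,a))$.

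Next I would run the strong upper gradient estimate in the reverse direction. Because $\localslope$ is a strong upper gradient and $f\circ u$ is non-increasing (so $u(I)\subset D(f)$ on the relevant range), \eqref{eq:sug} gives $f\circ u(s) - f\circ u(t) \leq \int_s^t \localslopeu(r)\,|u'|(r)\,dr$. The integrand lies in $L^1_{loc}$ by part (ii) and Hölder, so $f\circ u$ is locally absolutely continuous; combined with the fact that $\varphi$ equals $f\circ u$ almost everywhere and both are non-increasing (hence have at most countably many discontinuities), continuity of $f\circ u$ forces $\varphi = f\circ u$ everywhere, giving part (i). Now applying Young's inequality $\localslopeu\cdot|u'| \leq \tfrac1p|u'|^p + \tfrac1q\localslope^q(u)$ pointwise yields
\begin{align}
    \label{eq:plan_lower}
    f\circ u(s) - f\circ u(t) \leq \int_s^t \localslopeu(r)|u'|(r)\,dr \leq \frac{1}{p}\int_s^t |u'|^p(r)\,dr + \frac{1}{q}\int_s^t \localslope^q(u(r))\,dr.
\end{align}
Comparing \eqref{eq:plan_upper} with \eqref{eq:plan_lower} (both now phrased for $f\circ u = \varphi$) shows the two estimates pinch, so every inequality in the chain is an equality for all $s\leq t$; this is precisely the energy dissipation equation of part (iii). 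The pointwise energy identity \eqref{eq:energy_identity} then follows by differentiating the integral identity in $t$ for Lebesgue points and noting that equality in Young's inequality forces $|u'|^p = \localslope^q(u) = \localslopeu\cdot|u'|$ almost everywhere.

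The main obstacle I anticipate is the bookkeeping needed to upgrade the $\ms{L}$-a.e. relation $\varphi = f\circ u$ to genuine identification and to justify that \eqref{eq:plan_upper} holds as a true inequality rather than only in an a.e. sense — this rests on the non-increasing structure of $\varphi$ and the local absolute continuity extracted from the upper gradient bound, and these must be sequenced carefully so that no step presupposes what a later step establishes. Everything else is Young's inequality applied twice in opposite directions and a standard sandwiching argument.
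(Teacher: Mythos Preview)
Your proposal is correct and follows essentially the same route as the paper's proof: use the non-increasing structure of $\varphi$ together with \eqref{eq:p_cms} to obtain the $L^p_{loc}$ and $L^q_{loc}$ bounds, invoke the strong upper gradient property plus H\"older to get local absolute continuity of $f\circ u$ and hence the identification $\varphi = f\circ u$, and then sandwich via \eqref{eq:sug} and Young's inequality to force the energy dissipation equality. One small slip: $u \in AC_{loc}$ only gives $|u'| \in L^1_{loc}$ from the metric derivative theorem, not $L^p_{loc}$ directly---but you correctly recover $|u'|\in L^p_{loc}$ from \eqref{eq:plan_upper} anyway, so the argument is unaffected.
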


\begin{proof}
  Since $u$ satisfies the inequality~\eqref{eq:p_cms} and $\varphi$ is non-increasing, we have $|u'| \in L_{loc}^p([0, a))$ and $\localslope \circ u \in L^q_{loc}([0, a))$.
  Therefore the map $t \mapsto \localslope\circ u(t) \cdot |u'|(t)$ belongs to $L_{loc}^1([0, a))$.
  This implies that $f \circ u$ is locally absolutely continuous on $[0, a)$ because the local slope $\localslope$ is a strong upper gradient for $f$.
  We get $f\circ u = \varphi$ because continuous and monotone functions that are equal in $\ms{L}\text{-a.e.}$ points are identical at every point.
  Combining the integral of \eqref{eq:p_cms} and \eqref{eq:sug}, we get the energy dissipation equation.
\end{proof}

\subsection{\texorpdfstring{$(p, \lambda)$}{plambda}-convex function}
In this section, we recall the properties of the local slope for $(p, \lambda)$-convex functions: see Definition~\ref{def:p_lambda_convex}.

\begin{rem}
    Our definition of $(p, \lambda)$-convexity generalizes $(\lambda, p)$-convexity in~\cite[Definition~2.5]{ROSSI2011205} and $(p, \lambda)$-convexity in~\cite[Definition~4.1]{ohta2023gradient}, which are defined using $\psi(t)=t$, $\psi(t) = t^{p-1}$, respectively.
\end{rem}

\begin{ex}
    \begin{enumerate}
        \item Any convex function on a Euclidian space, more generally any geodesically convex function on a Riemannian manifold or a Banach space, is $(p, \lambda)$-convex for any $\psi$ as in Definition~\ref{def:p_lambda_convex}, $p\in(1, +\infty)$ and $\lambda \leq 0$.
        \item Let $p\geq2$ and $(\ms{S}, d)$ be a $L^p$-space over a measure space, more generally a $p$-uniformly convex metric space.
        For any point $w \in \ms{S}$, the function $d^p(\cdot, w)$ is $(p, \lambda)$-convex for $\psi(t)=t$ and some $\lambda > 0$; see~\cite[(3.4)]{XU19911127} for $L^p$ spaces and see~\cite[Definition~3.2]{Naor_Silberman_2011} for $p$-uniformly convex metric spaces.
    \end{enumerate}
\end{ex}

Before introducing the properties of the local slope, we present some remarks on another convexity condition related to $(p,\lambda)$-convexity.
Namely,
\begin{cond}
    \label{cond:another_convexity}
    Let $p\in(1, +\infty)$ and $\lambda \in \mb{R}$.
    For any $v_0, v_1 \in \ms{S}$ there exists a curve $\gamma \colon [0, 1] \to \ms{S}$ such that
    \begin{align*}
        \begin{split}
        F_p(\gamma_t; \tau, v_0) \leq & (1-t)F_p(\gamma_0; \tau, v_0) + t F_p(\gamma_1; \tau, v_0)\\
        & - t(1-t^{p-1})\left(\frac{1}{p\tau^{p-1}} + \lambda\right) d^p(\gamma_1, \gamma_0)
        \end{split}
    \end{align*}
    for every $t \in [0, 1]$ and $\tau$ with $\tau^{p-1} \in \left(0, \frac{1}{\lambda^-} \right)$, where $F_p(w; \tau, v) \coloneqq f(w) + \frac{1}{p\tau^{p-1}}d^p(w, v)$, $\lambda^- \coloneqq \max\{-\lambda, 0\}$ and $\frac{1}{\lambda^-}\coloneqq +\infty$ if $\lambda^-$ = 0.
\end{cond}

\begin{rem}
    Our main results hold if we assume that Condition~\ref{cond:another_convexity} holds for some $(p, \lambda)$ instead of $(p, \lambda)$-convexity of $f$ 
    because, in the proof of our results, it is sufficient to use Propositions~\ref{prop:local_slope_is_a_strong_upper_gradient} and~\ref{prop:minimizer_lambda_gt_0} introduced below; and these propositions hold when Condition~\ref{cond:another_convexity} are satisfied.
\end{rem}

\begin{ex}
    \begin{enumerate}
        \item Any $(p, \lambda)$-convex function for any $(p, \lambda)$ and $\psi(t)=t^{p-1}$ satisfies Condition~\ref{cond:another_convexity} for $p$ and $\lambda$.
        \item For a 2-Wasserstein space over a separable Hilbert space and any $\lambda < 0$, it is unclear that any $\lambda$-convex function along \textit{generalized geodesics} on such a space is $(2, \lambda)$-convex;
        however such a function satisfies Condition~\ref{cond:another_convexity} for $p=2$ and $\lambda$.
        See \cite[Section~9.2]{ambrosioGradientFlowsMetric2008} for more details.
    \end{enumerate}
\end{ex}

We are now back in the position to deal with the local slope of $(p,\lambda)$-convex functions.
The slope is a strong upper gradient, which allows us to use the valuable result -Proposition~\ref{prop:p_cms_energy_identity}-.
\begin{prop}
    \label{prop:local_slope_is_a_strong_upper_gradient}
    Let $f$ be $(p, \lambda)$-convex for some $p\in(1, +\infty)$ and $\lambda \in \mb{R}$.
    Then the local slope $\localslope$ is a lower semicontinuous strong upper gradient for $f$.
    Moreover the following global formula holds for any $v \in D(f)$:
    \begin{align}
        \localslope (v) &= \sup_{w \neq v} \left\{\frac{f(v) - f(w)}{d(v,w)} + \lambda d^{p-1}(v, w) \right\}^+ \notag \\
        \label{eq:global_formula}
        &= \sup_{w \neq v} \left\{\frac{f(v) - f(w)}{d(v,w)} - \lambda^- d^{p-1}(v, w) \right\}^+.
    \end{align}
\end{prop}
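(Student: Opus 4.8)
The plan is to reduce everything to the two-sided global formula~\eqref{eq:global_formula}, from which both the lower semicontinuity and the strong upper gradient property follow cheaply. For $\mu\in\mb{R}$ and $v\in D(f)$ write $g_\mu(v)\coloneqq\sup_{w\neq v}\bigl(\tfrac{f(v)-f(w)}{d(v,w)}+\mu\,d^{p-1}(v,w)\bigr)^+$, so that $g_\mu$ is non-decreasing in $\mu$ and, since $-\lambda^-\leq\lambda$, we have $g_{-\lambda^-}\leq g_\lambda$. Thus it suffices to prove the sandwich $g_\lambda(v)\leq\localslope(v)\leq g_{-\lambda^-}(v)$; the two displayed identities follow at once, and as a by-product $g_\lambda=g_{-\lambda^-}$. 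First I would prove the easy bound $\localslope(v)\leq g_{-\lambda^-}(v)$, which uses only lower semicontinuity of $f$, not convexity: from the elementary inequality $a^+\leq(a-b)^++b$ valid for $b\geq0$, applied with $a=\tfrac{f(v)-f(w)}{d(v,w)}$ and $b=\lambda^-d^{p-1}(v,w)\geq0$, one gets $\tfrac{(f(v)-f(w))^+}{d(v,w)}\leq g_{-\lambda^-}(v)+\lambda^-d^{p-1}(v,w)$, and letting $w\to v$ kills the last term (as $p>1$) after taking the $\limsup$.

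The core is the reverse bound $\localslope(v)\geq g_\lambda(v)$, where $(p,\lambda)$-convexity enters. Fix $w\in D(f)$, $w\neq v$, and set $c\coloneqq\tfrac{f(v)-f(w)}{d(v,w)}+\lambda\,d^{p-1}(v,w)$; I may assume $c>0$. Taking the curve $\gamma$ from $v_0=v$ to $v_1=w$ provided by Definition~\ref{def:p_lambda_convex}, the convexity inequality rearranges to $f(v)-f(\gamma_t)\geq t\,d(v,w)\bigl(c-\lambda\psi(t)d^{p-1}(v,w)\bigr)$, while the first defining property gives $d(v,\gamma_t)\leq t\,d(v,w)$. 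Since $\psi(t)\to0$ as $t\downarrow0$, the right-hand side is positive for small $t$, so $\gamma_t\neq v$, and moreover $d(v,\gamma_t)\to0$ forces $\gamma_t\to v$; combining the two estimates yields $\tfrac{(f(v)-f(\gamma_t))^+}{d(v,\gamma_t)}\geq c-\lambda\psi(t)d^{p-1}(v,w)$. Letting $t\downarrow0$ along this family approaching $v$ gives $\localslope(v)\geq c$, and the supremum over $w$ gives $\localslope(v)\geq g_\lambda(v)$, closing the sandwich. The delicate point is exactly the interplay of the two defining properties of $(p,\lambda)$-convexity: the speed bound $d(\gamma_t,\gamma_0)\leq t\,d(\gamma_1,\gamma_0)$ guarantees $\gamma_t$ approaches $v$ fast enough to be admissible in the $\limsup$ defining the slope, while $\psi(t)\to0$ is precisely what recovers the full coefficient $\lambda$ in the limit.

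For lower semicontinuity I would read off $\localslope(v)=\sup_{w\neq v}\phi_w(v)$ with $\phi_w(v)=\bigl(\tfrac{f(v)-f(w)}{d(v,w)}+\lambda\,d^{p-1}(v,w)\bigr)^+$: for each fixed $w\in D(f)$ the map $v\mapsto\phi_w(v)$ is lower semicontinuous where defined (numerator lower semicontinuous by lower semicontinuity of $f$, denominator continuous and positive away from $w$, the extra term continuous, and both $(\cdot)^+$ and $\sup$ preserve lower semicontinuity), and at points outside $D(f)$ one has $\phi_w\to+\infty$, matching $\localslope=+\infty$ there; a pointwise supremum of lower semicontinuous functions is lower semicontinuous, the diagonal $w=v$ being harmless since $\localslope\geq0$.

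Finally, for the strong upper gradient property the formula yields the one-sided pointwise estimate $f(v)-f(w)\leq\localslope(v)\,d(v,w)+\lambda^-d^p(v,w)$ for all $v,w\in D(f)$. Since $\localslope$ is lower semicontinuous it is Borel, hence $\localslope\circ v$ is Borel for any curve $v$. Telescoping this estimate over a partition of $[s,t]$, bounding each $d(v(t_i),v(t_{i+1}))$ by $\int_{t_i}^{t_{i+1}}|v'|$ and noting $\sum_i d^p(v(t_i),v(t_{i+1}))\to0$ as the mesh vanishes, reduces~\eqref{eq:sug} to controlling a left-endpoint Riemann sum of the integrand $\localslope\circ v\cdot|v'|$. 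I expect this last step to be the main obstacle: because $\localslope\circ v$ is only lower semicontinuous, such Riemann sums need not converge for an arbitrary partition, so one must reparametrize by arclength and select partitions whose nodes avoid the open super-level sets of the integrand well enough that the sums do not overshoot the integral. This is the standard but technical argument underlying the strong upper gradient property, which one may alternatively import directly from~\cite{ambrosioGradientFlowsMetric2008,ohta2023gradient}.
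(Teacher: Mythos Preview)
The paper does not actually prove this proposition; its proof is a bare citation to \cite[Theorem~2.4.9 and Corollary~2.4.10]{ambrosioGradientFlowsMetric2008} and \cite[Theorem~4.8 and Corollary~4.9]{ohta2023gradient}. Your proposal is essentially the argument one finds in those references, and it is correct. In fact you do slightly more than the cited sources: the paper's Definition~\ref{def:p_lambda_convex} allows a general modulus $\psi$ with $\psi(t)\to0$, whereas the cited works fix $\psi(t)=t$ or $\psi(t)=t^{p-1}$; your derivation of the bound $\localslope(v)\geq g_\lambda(v)$ uses only $\psi(t)\to0$, so it covers the paper's generality directly. The sandwich $g_\lambda\leq\localslope\leq g_{-\lambda^-}$ and the lower semicontinuity argument via $\sup_w\phi_w$ are both clean and standard. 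Your honest flag on the strong upper gradient step is appropriate: the pointwise inequality $f(v)-f(w)\leq\localslope(v)\,d(v,w)+\lambda^-d^p(v,w)$ is exactly what drives the proof in the references, and the passage from it to~\eqref{eq:sug} is the routine but fiddly ``global slope is a strong upper gradient'' argument, which you are right either to sketch or to import.
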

\begin{proof}
    See~\cite[Theorem~2.4.9 and Corollary~2.4.10]{ambrosioGradientFlowsMetric2008} for $p=2$ or~\cite[Theorem~4.8 and Corollary~4.9]{ohta2023gradient} for general $p \in (1, +\infty)$.
\end{proof}

The next result is well-known.
The proofs can be seen in~\cite[Lemma~2.4.13]{ambrosioGradientFlowsMetric2008} for the case $p=2$ and in~\cite[Lemma~4.13]{ohta2023gradient} for the case $p \in (1, +\infty)$.
\begin{prop}
    \label{prop:minimizer_lambda_gt_0}
    Let $f$ be $(p, \lambda)$-convex for some $p\in(1, +\infty)$ and $\lambda \in \mb{R}$.
    If $\lambda > 0$, then we have
    \begin{align*}
        f(v) - \inf_{\ms{S}}f \leq \frac{1}{q \lambda^{q/p}}\localslope^{q}(v) \quad \text{for any } v \in D(f),
    \end{align*}
    where $q$ is the conjugate exponent of $p$.

    Moreover, if $D(\localslope) \neq \emptyset$ and $\lambda > 0$, then $f$ admits the unique minimizer $\bar{u}$ and we have
    \begin{align*}
        \lambda d^{p}(v, \bar{u}) \leq f(v) - f(\bar{u}) \quad \text{for any } v \in D(f).
    \end{align*}
\end{prop}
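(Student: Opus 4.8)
The plan is to treat the two assertions separately, deriving the energy bound first and then using it to build the minimizer. Throughout I would exploit that $\lambda>0$ forces $\lambda^-=0$, so that the global formula of Proposition~\ref{prop:local_slope_is_a_strong_upper_gradient} reads
\begin{align*}
    \localslope(v) = \sup_{w \neq v}\left\{\frac{f(v)-f(w)}{d(v,w)} + \lambda d^{p-1}(v,w)\right\}^+.
\end{align*}
In particular, for every $w \neq v$ one has $\frac{f(v)-f(w)}{d(v,w)} + \lambda d^{p-1}(v,w) \leq \localslope(v)$, which rearranges to the pointwise estimate
\begin{align*}
    f(v) - f(w) \leq \localslope(v)\, d(v,w) - \lambda\, d^p(v,w).
\end{align*}

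For the first inequality I would simply maximize the right-hand side over the single scalar $r = d(v,w) \geq 0$. Applying Young's inequality to $\localslope(v)\,r - \lambda r^p$ bounds it by a constant multiple of $\localslope^q(v)$; a routine computation using $q-1 = q/p$ gives $\sup_{r\geq 0}\{\localslope(v)\, r - \lambda r^p\} = \frac{1}{q(\lambda p)^{q/p}}\localslope^q(v) \leq \frac{1}{q\lambda^{q/p}}\localslope^q(v)$. Since this bound is independent of $w$ and holds trivially when $w=v$, taking the infimum over $w \in \ms{S}$ replaces $f(w)$ by $\inf_{\ms{S}} f$ and yields the claim.

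For the second part I would first note that choosing any $v_0 \in D(\localslope) \neq \emptyset$ in the inequality just proved gives $\inf_{\ms{S}} f \geq f(v_0) - \frac{1}{q\lambda^{q/p}}\localslope^q(v_0) > -\infty$, so $f$ is bounded below. To produce a minimizer I would take a minimizing sequence $(v_n)$ and apply $(p,\lambda)$-convexity to each pair $v_n, v_m$: fixing one $t_0 \in (0,1)$ with $\psi(t_0) < 1$ and evaluating the defining inequality at $t_0$, the bound $f(\gamma_{t_0}) \geq \inf_{\ms{S}} f$ rearranges to
\begin{align*}
    \lambda t_0 (1 - \psi(t_0))\, d^p(v_n, v_m) \leq (1-t_0)\big(f(v_n) - \inf_{\ms{S}} f\big) + t_0\big(f(v_m) - \inf_{\ms{S}} f\big),
\end{align*}
whose right-hand side tends to $0$; hence $(v_n)$ is Cauchy, converges by completeness, and its limit is a minimizer $\ubar$ by lower semicontinuity. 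Uniqueness follows the same way: two distinct minimizers would, through the convexity inequality evaluated at a $t_0$ with $\psi(t_0)<1$, produce a point of strictly smaller value, a contradiction. Finally, for the lower bound I would apply the convexity inequality along a curve from $\gamma_0 = \ubar$ to $\gamma_1 = v$, use $f(\gamma_t) \geq f(\ubar)$ to absorb the $(1-t)f(\ubar)$ term, divide by $t$, and let $t \downarrow 0$ so that $\psi(t) \to 0$, leaving $\lambda d^p(v, \ubar) \leq f(v) - f(\ubar)$.

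I expect the main obstacle to be the existence of the minimizer rather than either inequality: the two estimates are essentially one-line consequences of the slope formula and of the convexity definition respectively, whereas establishing that a minimizing sequence is Cauchy requires combining the strong ($\lambda>0$) convexity with the structural hypotheses on $\psi$ — specifically that $\psi(t)<1$ somewhere, which is exactly what keeps the coercive term $\lambda t_0(1-\psi(t_0))d^p$ nondegenerate — together with completeness and lower semicontinuity of $f$. A secondary point to handle carefully is that the constant from Young's inequality is only an upper bound and not tight, so I would record the comparison $\frac{1}{q(\lambda p)^{q/p}} \leq \frac{1}{q\lambda^{q/p}}$ rather than assert equality.
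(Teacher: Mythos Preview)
Your argument is correct. The paper does not give its own proof of this proposition --- it simply cites \cite[Lemma~2.4.13]{ambrosioGradientFlowsMetric2008} and \cite[Lemma~4.13]{ohta2023gradient} --- and the route you take (extracting $f(v)-f(w)\le \localslope(v)\,d(v,w)-\lambda\,d^p(v,w)$ from the global slope formula and optimizing in $d(v,w)$ for the first estimate, then using the strict convexity $\lambda>0$ together with completeness and lower semicontinuity to produce and characterize the minimizer) is the standard argument underlying those references.
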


\section{Local slope's bounds along \texorpdfstring{$p$}{p}-curves of maximal slope}
\label{sec:bounds_of_local_slope}
The main purpose of this section is to show that the local slope is bounded by a positive finite constant from above and below locally along $p$-curves of maximal slope.
Throughout this section, let $(\ms{S},d)$ be a complete metric space, $f \colon \ms{S} \to (-\infty, +\infty]$ be a proper and lower semicontinuous, and $u_0 \in D(f)$.

\begin{prop}
    \label{prop:positivity_of_localslope}
    Suppose that $f$ is $(p_0, \lambda)$-convex for some $p_0 \in (1, +\infty)$ and $\lambda \leq 0$.
    Let $p \in (1, +\infty)$ if $\lambda = 0$ or $p \in (1, p_0]$ if $\lambda < 0$, and $u \in \pcms{p}{a}$.
    If there exists $t^\ast \in [0, a)$ such that $\localslopeu(t^\ast) = 0$,
    then we have
    \begin{enumerate}
        \item $\localslopeu(t) \equiv 0$ on $[t^\ast, a)$;
        \item $u(t) \equiv u(t^\ast)$ on $[t^\ast, a)$.
    \end{enumerate}
\end{prop}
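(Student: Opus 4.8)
The plan is to derive a single integral inequality coupling the energy dissipated along $u$ with the distance it travels, and then to exploit the constraint $p\le p_0$ (when $\lambda<0$) to force the curve to be stationary. First I would record what the vanishing of the slope at $t^\ast$ means. Since $f$ is $(p_0,\lambda)$-convex, the global formula of Proposition~\ref{prop:local_slope_is_a_strong_upper_gradient} (with exponent $p_0$) shows that $\localslopeu(t^\ast)=0$ is equivalent to
\[
    f(u(t^\ast)) - f(w) \le \lambda^- d^{p_0}(u(t^\ast), w) \quad \text{for every } w \in \ms{S}.
\]
Writing $E(t) \coloneqq f(u(t^\ast)) - f(u(t))$, which is non-negative because $f\circ u$ is non-increasing and is locally absolutely continuous by Proposition~\ref{prop:p_cms_energy_identity}, and taking $w=u(t)$, this yields the upper bound $E(t) \le \lambda^- d^{p_0}(u(t^\ast),u(t)) \le \lambda^- L(t)^{p_0}$, where $L(t)\coloneqq \int_{t^\ast}^t |u'|(r)\,dr$ and $d(u(t^\ast),u(t)) \le L(t)$.

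Next I would produce a matching lower bound for $E(t)$ of order $p$. From the energy dissipation equation and the energy identity $|u'|^p = \localslope^q\circ u$ (both in Proposition~\ref{prop:p_cms_energy_identity}) one has $E(t) = \int_{t^\ast}^t |u'|^p(r)\,dr$, and Hölder's inequality gives $\int_{t^\ast}^t |u'|^p \ge L(t)^p/(t-t^\ast)^{p-1}$. Combining the two estimates produces the master inequality
\[
    \frac{L(t)^p}{(t-t^\ast)^{p-1}} \le \lambda^- L(t)^{p_0}, \qquad t \in (t^\ast, a).
\]
If $\lambda=0$ the right-hand side vanishes, forcing $E\equiv 0$, hence $|u'|=0$ a.e.\ and $u$ constant on $[t^\ast,a)$. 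If $\lambda<0$, then dividing by $L(t)^p$ at any $t$ with $L(t)>0$ gives $L(t)^{p_0-p} \ge (\lambda^-)^{-1}(t-t^\ast)^{1-p}$; this is exactly where the hypothesis $p\le p_0$ is decisive, as it makes the exponent $p_0-p$ non-negative. When $p<p_0$ the right-hand side blows up as $t\downarrow t^\ast$, contradicting $L(t)\to L(t^\ast)=0$ by continuity; when $p=p_0$ the inequality reads $1 \ge (\lambda^-)^{-1}(t-t^\ast)^{1-p}$, which is violated once $t-t^\ast$ is below the explicit threshold $\delta\coloneqq(\lambda^-)^{-1/(p-1)}$. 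In every case $u\equiv u(t^\ast)$ on some right-neighbourhood $[t^\ast,t^\ast+\delta)$ of $t^\ast$.

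Finally I would upgrade this local statement to the whole interval by a continuation argument. Setting $T\coloneqq\sup\{\tau\in[t^\ast,a) : u\equiv u(t^\ast)\text{ on }[t^\ast,\tau]\}$, continuity of $u$ forces $u(T)=u(t^\ast)$ whenever $T<a$, so $\localslopeu(T)=\localslopeu(t^\ast)=0$ and the local step applies afresh at $T$, extending constancy beyond $T$ and contradicting the definition of $T$ unless $T=a$. Hence $u\equiv u(t^\ast)$ on $[t^\ast,a)$, which is (ii), and then (i) follows immediately since $\localslopeu(t)=\localslope(u(t^\ast))=0$ there. I expect the main obstacle to be the borderline case $p=p_0$: there the master inequality only yields a fixed step $\delta$ rather than the entire interval at once, so the continuation argument — and the observation that the slope vanishes again at the new base point, allowing the local step to be re-applied — is indispensable. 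This is also precisely the place where the structural assumption $p\le p_0$ cannot be dropped, consistent with the counterexample announced for $p>p_0$.
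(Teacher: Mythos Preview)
Your argument is correct and follows essentially the same route as the paper: both combine the global slope formula~\eqref{eq:global_formula}, the energy dissipation identity, and H\"older's inequality to produce an inequality of the type $d^p/(t-t^\ast)^{p-1}\le\lambda^-\, d^{p_0}$, then propagate constancy by a continuation (open--closed) argument. The only cosmetic difference is that the paper works directly with $d(u(t_0),u(t_1))$ instead of $L(t)$ and first uses uniform continuity on a compact subinterval to arrange $d\le 1$, so that $d^{p_0}\le d^{p}$ handles the regimes $p<p_0$ and $p=p_0$ in one stroke, avoiding your case split.
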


\begin{proof}
    Note that the energy identity~\eqref{eq:energy_identity} holds and $\localslope$ is lower semicontinuous; (i) follows from (ii).
    Thus let us check (ii).
    Let $A \coloneqq \{t \in [t^\ast, a) \mid u(t) = u(t^\ast) \}$.
    We fix any $T \in (t^\ast, a)$, then consider the restriction $A' \coloneqq A \cap [t^\ast, T]$.
    It is enough to show that there exists $\delta > 0$ such that $[t, t+\delta] \cap [t^\ast, T] \subset A'$ for any $t \in A'$.
    The uniform continuity of $u$ on $[t^\ast, T]$ implies the existence of $\delta_0 > 0$ satisfying that $d(u(t), u(s)) \leq 1$ for any $t, s \in [t^\ast, T]$ with $|s - t| \leq \delta_0$.
    Let $\delta \coloneqq \min\{\delta_0, \left(\frac{1}{2\lambda^-}\right)^{q/p}\}$.
    Suppose that there exists $t_0 \in A'$ with $[t_0, t_0+\delta] \cap [t^\ast, T] \not\subset A'$; hence we can take $t_1 > t_0$ such that $t_1 - t_0 \leq \left(\frac{1}{2\lambda^-}\right)^{q/p}$ and $0 < d(u(t_0), u(t_1)) \leq 1$.
    The global formula~\eqref{eq:global_formula} of the local slope gives
    \begin{align}
        \label{eq:positivity_of_localslope_1}
        f(u(t_0)) - f(u(t_1)) \leq \lambda^- d^{p_0}(u(t_0), u(t_1)).
    \end{align}
    Combining~\eqref{eq:positivity_of_localslope_1} with the energy identity~\eqref{eq:energy_identity} we get
    \begin{align*}
        &d^p(u(t_0), u(t_1))
        \leq \int_{t_0}^{t_1} |u'|^p(r)dr \cdot (t_1 - t_0)^{p/q}\\
        &= \left\{ f(u(t_0)) - f(u(t_1)) \right\} \cdot (t_1 - t_0)^{p/q}
        \leq \begin{cases}
            0 & \text{if } \lambda = 0\\
            \frac{1}{2}d^{p_0}(u(t_0), u(t_1)) & \text{if } \lambda < 0.
        \end{cases}
    \end{align*}
    This is a contradiction.
    Thus we can get the desired result.
\end{proof}

Based on this proposition, we define the supremum time $\ta{u}$ representing the positivity of the local slope along a $p$-curve of maximal slope $u$ by,
\begin{align}
    \label{eq:t_ast_u}
    \ta{u} \coloneqq \sup \{t \in [0, a) \mid \localslopeu(t) > 0\}.
\end{align}
The following lemma immediately follows from Proposition~\ref{prop:positivity_of_localslope} and the lower semicontinuity of the local slope.
\begin{cor}[Slope's lower bound along with $p$-curves of maximal slope]
    \label{cor:lower_bound_of_local_slope}
    Suppose that $f$ is $(p_0, \lambda)$-convex for some $p_0 \in (1, +\infty)$ and $\lambda \leq 0$.
    Let $p \in (1, +\infty)$ if $\lambda = 0$ or $p \in (1, p_0]$ if $\lambda < 0$, and $u \in \pcms{p}{a}$.
    For every $T \in (0, \ta{u})$ there exists a positive constant $c>0$ such that 
    \begin{align*}
        \localslopeu(t) \geq c \quad \text{for every } t \in [0, T].
    \end{align*}
    Moreover, if we also have $\ta{u} < a$, then the following hold:
    \begin{enumerate}
        \item $\localslopeu(t) \equiv 0$ on $[\ta{u}, a)$;
        \item $ u(t) \equiv u(\ta{u})$ on $[\ta{u}, a)$.
    \end{enumerate}
\end{cor}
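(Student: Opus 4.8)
The plan is to exploit the forward invariance of the zero set of the slope supplied by Proposition~\ref{prop:positivity_of_localslope}: the set $\{t \in [0,a) : \localslopeu(t) = 0\}$ is either empty or of the form $[t^\ast, a)$, so $\localslopeu$ is strictly positive on an initial segment and the threshold is recorded by $\ta{u}$. Having located where the slope is positive, I would then upgrade ``positive'' to ``bounded below by a constant'' using lower semicontinuity together with compactness of $[0,T]$.

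First I would prove the lower bound. Fix $T \in (0, \ta{u})$. Since $T < \ta{u} = \sup\{t : \localslopeu(t) > 0\}$, by definition of the supremum there is $t' \in (T, a)$ with $\localslopeu(t') > 0$. I then claim $\localslopeu(t) > 0$ for every $t \in [0, T]$: if instead $\localslopeu(t) = 0$ for some $t \leq T < t'$, then Proposition~\ref{prop:positivity_of_localslope}(i) would force $\localslopeu \equiv 0$ on $[t, a)$, which contains $t'$, contradicting $\localslopeu(t') > 0$. Now $\localslope$ is lower semicontinuous by Proposition~\ref{prop:local_slope_is_a_strong_upper_gradient} and $u$ is continuous, so $\localslopeu$ is lower semicontinuous on the compact interval $[0, T]$ and hence attains its minimum at some $t_0 \in [0, T]$. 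Setting $c \coloneqq \localslopeu(t_0) > 0$ yields $\localslopeu(t) \geq c$ on $[0, T]$, as required.

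For the ``moreover'' part I would assume $\ta{u} < a$, and the key step is to show $\localslopeu(\ta{u}) = 0$. For every $t \in (\ta{u}, a)$ we have $\localslopeu(t) = 0$ directly from the definition of $\ta{u}$ as a supremum; choosing $t_n \:\downarrow\: \ta{u}$ with $t_n \in (\ta{u}, a)$ and invoking lower semicontinuity of $\localslopeu$ gives $0 \leq \localslopeu(\ta{u}) \leq \liminf_{n} \localslopeu(t_n) = 0$, whence $\localslopeu(\ta{u}) = 0$. Applying Proposition~\ref{prop:positivity_of_localslope} with $t^\ast = \ta{u}$ then delivers statements (i) and (ii) verbatim.

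The arguments are short, and the only delicate point is the behaviour at the endpoint $t = \ta{u}$: a priori the supremum defining $\ta{u}$ need not be attained, so one must rule out the slope jumping up at $\ta{u}$, and this is precisely where lower semicontinuity of $\localslope$ is indispensable. Everything else reduces to the standard fact that a lower semicontinuous function on a compact interval attains its infimum.
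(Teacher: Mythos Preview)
Your proof is correct and follows exactly the approach the paper indicates: the paper simply states that the corollary ``immediately follows from Proposition~\ref{prop:positivity_of_localslope} and the lower semicontinuity of the local slope,'' and your argument fills in precisely those details. The only point worth noting is that you have made explicit the compactness-plus-lsc step and the endpoint analysis at $\ta{u}$, which the paper leaves implicit.
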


The following result is an enhanced version of a well-known reparametrization result, e.g., introduced in~\cite[Lemma~1.1.4]{ambrosioGradientFlowsMetric2008}.
\begin{lem}[Arc-length parameter]
    \label{lem:arc_length_param}
    Suppose that $f$ is $(p_0, \lambda)$-convex for some $p_0 \in (1, +\infty)$ and $\lambda \leq 0$.
    Let $p \in (1, +\infty)$ if $\lambda = 0$ or $p \in (1, p_0]$ if $\lambda < 0$.
    For each $u \in \cmsp$ there exist a strictly increasing locally absolutely continuous map $\params \colon [0, \ta{u}) \to [0, \sa_u)$ and its inverse map $\paramt \colon [0, \sa_u) \to [0, \ta{u})$ satisfying the following:
    \begin{enumerate}[nolistsep]
        \item it holds that $\params'(t) = |u'|(t) \neq 0$,  $\ms{L}$-a.e. in $t \in [0, \ta{u})$;
        \item the map $s \mapsto \paramt(s)$ is locally Lipschitz on $[0, \sa_u)$, and it holds that \begin{align}
            \label{eq: differential_of_paramt}
            \paramt'(s) = \frac{1}{|u'|(\paramt(s))},\quad \ms{L}\text{-a.e. in } s\in[0, \sa_u);
        \end{align}
        \item the curve $\tilde{u} \coloneqq u \circ \paramt$ is $1$-Lipschitz on $[0, \sa_u)$;
    \end{enumerate}
    where the constant $\sa_u$ is defined to be $\sa_u \coloneqq \int_{0}^{\ta{u}} |u'|(r)dr$.
    
    We call the map $\paramt$ the arc-length parameter of $u$.
\end{lem}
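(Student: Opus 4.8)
The plan is to define the arc-length function explicitly and then invert it, using the slope lower bound from Corollary~\ref{cor:lower_bound_of_local_slope} to upgrade the classical reparametrization to the stated strict-monotonicity and local-Lipschitz form. First I would set $\params(t) \coloneqq \int_0^t |u'|(r)\,dr$ for $t \in [0, \ta{u})$. Since $u$ is locally absolutely continuous we have $|u'| \in L^1_{loc}([0, \ta{u}))$, so $\params$ is locally absolutely continuous and, by the Lebesgue differentiation theorem, $\params'(t) = |u'|(t)$ for $\ms{L}$-a.e.\ $t$. Moreover $\sup_{t<\ta{u}}\params(t) = \int_0^{\ta{u}}|u'|\,dr = \sa_u$, so $\params$ maps $[0,\ta{u})$ continuously and monotonically into $[0, \sa_u)$.

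The key enhancement is a positive lower bound on the speed. By Proposition~\ref{prop:local_slope_is_a_strong_upper_gradient} the slope is a strong upper gradient (as $f$ is $(p_0,\lambda)$-convex), so the energy identity~\eqref{eq:energy_identity} applies and gives $|u'|(t) = \localslope^{q/p}(u(t))$ for $\ms{L}$-a.e.\ $t$. Fixing $T \in (0, \ta{u})$, Corollary~\ref{cor:lower_bound_of_local_slope} produces $c>0$ with $\localslopeu(t) \geq c$ on $[0, T]$, whence $|u'|(t) \geq c^{q/p} > 0$ for $\ms{L}$-a.e.\ $t \in [0, T]$. Integrating, $\params(t_2) - \params(t_1) \geq c^{q/p}(t_2 - t_1)$ for all $t_1 \le t_2$ in $[0, T]$; since $T$ is arbitrary this proves that $\params$ is strictly increasing and establishes (i). Consequently $\params \colon [0, \ta{u}) \to [0, \sa_u)$ is a continuous increasing bijection, and I define $\paramt$ to be its inverse.

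For (ii), the linear lower bound on the increments of $\params$ immediately yields $\paramt(s_2) - \paramt(s_1) \le c^{-q/p}(s_2 - s_1)$ on each compact subinterval, so $\paramt$ is locally Lipschitz and in particular differentiable $\ms{L}$-a.e. To identify the derivative I would let $N \subset [0, \ta{u})$ be the null set off which $\params'(t) = |u'|(t)$ exists and is strictly positive; since $\params$ is absolutely continuous it enjoys the Luzin (N) property, so $\params(N)$ is $\ms{L}$-null. For $s \notin \params(N)$ at which $\paramt$ is differentiable, writing $t = \paramt(s)$ and using the continuity of $\paramt$ together with $\params'(t)>0$ gives
\begin{align*}
    \paramt'(s) = \lim_{s'\to s}\frac{\paramt(s')-\paramt(s)}{s'-s} = \lim_{t'\to t}\frac{t'-t}{\params(t')-\params(t)} = \frac{1}{\params'(t)} = \frac{1}{|u'|(\paramt(s))},
\end{align*}
which therefore holds $\ms{L}$-a.e. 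Finally, (iii) follows from a direct estimate: for $s_1 \le s_2$ one has, by the absolute continuity bound~\eqref{eq:absolutely_continuous_curve} with $m = |u'|$ and the definition of $\params$,
\begin{align*}
    d(\ut(s_1), \ut(s_2)) = d(u(\paramt(s_1)), u(\paramt(s_2))) \le \int_{\paramt(s_1)}^{\paramt(s_2)} |u'|(r)\,dr = s_2 - s_1 .
\end{align*}

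The main obstacle is the positivity of the speed underlying everything else: without the slope lower bound of Corollary~\ref{cor:lower_bound_of_local_slope} (itself resting on Proposition~\ref{prop:positivity_of_localslope}) the function $\params$ could be constant on intervals and fail to be invertible, so the whole enhancement of the classical statement hinges on combining that corollary with the energy identity. A secondary technical point is the $\ms{L}$-a.e.\ differentiation of the inverse, where the Luzin (N) property of the absolutely continuous $\params$ is what lets me discard the exceptional set after inversion.
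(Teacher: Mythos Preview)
Your proposal is correct and follows essentially the same route as the paper: define $\params(t)=\int_0^t|u'|$, combine Corollary~\ref{cor:lower_bound_of_local_slope} with the energy identity to get a linear lower bound on the increments of $\params$ (hence strict monotonicity and local Lipschitz continuity of the inverse), and read off the $1$-Lipschitz bound for $\tilde u$ directly. If anything you are more explicit than the paper about the a.e.\ formula $\paramt'(s)=|u'|(\paramt(s))^{-1}$, invoking the Luzin~(N) property of $\params$ to transport the exceptional set; the paper leaves this step implicit.
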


\begin{proof}
    We define $\params \colon [0, \ta{u}) \to [0, \sa_u)$ by $t \mapsto \int_{0}^{t} |u'|(r)dr$.
    Proposition~\ref{prop:p_cms_energy_identity} yields that $\params$ is locally absolutely continuous on $[0, \ta{u})$.
    For every $T \in (0, \ta{u})$ Corollary~\ref{cor:lower_bound_of_local_slope} and the energy identity~\eqref{eq:energy_identity}
    show that there exists a constant $c > 0$ satisfying
    \begin{align}
        \label{eq:arc_length_param_positivity_of_s}
        \params(t_2) - \params(t_1) \geq c (t_2 - t_1), \quad 0 \leq \forall t_1 \leq \forall t_2 \leq T;
    \end{align}
    hence $\params$ is strictly increasing.
    This completes the proof of (i).

    Let $\paramt$ be the inverse map of $\params$.
    For every $S \in (0, \sa_u)$ \eqref{eq:arc_length_param_positivity_of_s} induces the existence of $c > 0$ such that
    \begin{align*}
        \frac{\paramt(s_2) - \paramt(s_1)}{s_2 - s_1}
        = \frac{1}{\frac{\params(\paramt(s_2)) - \params(\paramt(s_1))}{\paramt(s_2) - \paramt(s_1)}}
        \leq \frac{1}{c}, \quad 0 \leq \forall s_1 \leq \forall s_2 \leq S.
    \end{align*}
    This ends the check of (ii).

    Finally, letting $\tilde{u} \coloneqq u \circ \paramt$, 
    we get
    \begin{align*}
        d(\tu(s_1), \tu(s_2)) \leq \int_{\paramt(s_1)}^{\paramt(s_2)} |u'|(r)dr
        = s_2 - s_1, \quad 0 \leq \forall s_1 \leq \forall s_2 < \sa_u,
    \end{align*}
    which proves (iii).
\end{proof}

\begin{prop}
\label{prop:property_of_arc_length_param}
The arc-length parametrization $\tilde{u}$ satisfies the following:
    \begin{enumerate}
        \item $f \circ \tilde{u}$ is locally absolutely continuous on $[0, \sa_u)$;
        \item $(f\circ \ut)'(s) = - \localslope \circ \ut (s)$ for  $\ms{L}\text{-a.e.}$ in $s \in [0, \sa_u)$.
    \end{enumerate}
\end{prop}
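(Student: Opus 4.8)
The plan is to transport the energy dissipation equation for $u$ (Proposition~\ref{prop:p_cms_energy_identity}(iii)) through the change of variables induced by the arc-length parameter $\paramt$, obtaining directly an integral representation of $f\circ\ut$ from which both (i) and (ii) follow simultaneously. First I would put the dissipation of $f\circ u$ into single-integral form. Fix $0\le t_1\le t_2<\ta{u}$. The energy identity~\eqref{eq:energy_identity} gives $|u'|^p=\localslope^q\circ u=\localslopeu\,|u'|$ $\ms{L}$-a.e., so $\tfrac1p|u'|^p+\tfrac1q\localslope^q\circ u=\localslopeu\,|u'|$; inserting this into Proposition~\ref{prop:p_cms_energy_identity}(iii) yields
\begin{align*}
    f(u(t_2))-f(u(t_1)) = -\int_{t_1}^{t_2}\localslopeu(r)\,|u'|(r)\,dr,
\end{align*}
whose integrand equals $\localslope^q\circ u\in L^1_{loc}([0,\ta{u}))$ by Proposition~\ref{prop:p_cms_energy_identity}(ii).

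Next I would fix $0\le s_1\le s_2<\sa_u$ and set $t_i=\paramt(s_i)<\ta{u}$. Since $\paramt$ is locally Lipschitz and strictly increasing (Lemma~\ref{lem:arc_length_param}), the substitution $r=\paramt(\sigma)$ is admissible for the nonnegative integrand $g\coloneqq\localslopeu\,|u'|\in L^1_{loc}$, giving
\begin{align*}
    \int_{\paramt(s_1)}^{\paramt(s_2)} g(r)\,dr = \int_{s_1}^{s_2} g(\paramt(\sigma))\,\paramt'(\sigma)\,d\sigma.
\end{align*}
By Lemma~\ref{lem:arc_length_param}(ii) one has $\paramt'(\sigma)=1/|u'|(\paramt(\sigma))$ for $\ms{L}$-a.e.\ $\sigma$, with $|u'|(\paramt(\sigma))$ finite and nonzero a.e., so the integrand collapses to $\localslopeu(\paramt(\sigma))=\localslope\circ\ut(\sigma)$. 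Combining the two displays produces the key identity
\begin{align*}
    f(\ut(s_2))-f(\ut(s_1)) = -\int_{s_1}^{s_2}\localslope\circ\ut(\sigma)\,d\sigma, \qquad 0\le s_1\le s_2<\sa_u.
\end{align*}

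From here both assertions are immediate. The right-hand side exhibits $f\circ\ut$ as the indefinite integral of the nonnegative function $\localslope\circ\ut$, which is locally integrable on $[0,\sa_u)$ precisely because the integral just computed is finite; hence $f\circ\ut$ is locally absolutely continuous, proving (i). Differentiating via the Lebesgue differentiation theorem gives $(f\circ\ut)'(s)=-\localslope\circ\ut(s)$ for $\ms{L}$-a.e.\ $s$, which is (ii).

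The only genuine technical point is the legitimacy of the change of variables under the reparametrization, together with the null-set bookkeeping (that $\paramt'=1/(|u'|\circ\paramt)$ a.e.\ and that $|u'|\circ\paramt\neq0$ a.e., so the integrand simplifies correctly). Both are furnished by Lemma~\ref{lem:arc_length_param}, and since $\paramt$ is monotone and locally Lipschitz the substitution formula holds for nonnegative $L^1_{loc}$ integrands with no further hypotheses, so no delicate Luzin-type argument is required. I would also note the alternative, slightly more fragile route: (i) follows from the fact that $f\circ u$ is locally absolutely continuous (Proposition~\ref{prop:p_cms_energy_identity}(i)) composed with the locally Lipschitz monotone $\paramt$, and (ii) from the chain rule $(f\circ\ut)'(s)=(f\circ u)'(\paramt(s))\,\paramt'(s)$ combined with~\eqref{eq:energy_identity}; this, however, requires verifying that the exceptional null set of $(f\circ u)'$ pulls back to a null set under $\params$, which the substitution argument sidesteps entirely.
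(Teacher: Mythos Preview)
Your argument is correct and the core mechanism---transporting information from $u$ to $\tilde u$ via the change of variables $r=\paramt(\sigma)$---is the same one the paper uses. The execution, however, is organized differently. The paper proves (i) from the strong upper gradient \emph{inequality} \eqref{eq:sug}, obtaining $|f\circ\ut(s_0)-f\circ\ut(s_1)|\le\int_{s_0}^{s_1}\localslope\circ\ut$, and then establishes (ii) separately by the chain rule $(f\circ\ut)'(s)=(f\circ u)'(\paramt(s))\,\paramt'(s)$ together with the null-set bookkeeping (that $\params$ maps the exceptional set of \eqref{eq:energy_identity} to a null set because $\params$ is locally absolutely continuous). You instead start from the exact energy dissipation \emph{equation} (Proposition~\ref{prop:p_cms_energy_identity}(iii)) and push it through the substitution to get a single integral identity $f(\ut(s_2))-f(\ut(s_1))=-\int_{s_1}^{s_2}\localslope\circ\ut$, from which both (i) and (ii) drop out at once by Lebesgue differentiation. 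This is slightly cleaner, since---as you yourself point out at the end---it sidesteps the pullback-of-null-sets verification that the chain-rule route requires; amusingly, the ``alternative, slightly more fragile route'' you describe is precisely the paper's proof of (ii).
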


\begin{proof}
    \textit{(i)}:
    Since the local slope is a strong upper gradient,
    the change of variables for $\paramt$ gives for any $s_0, s_1 \in [0, \sa_u)$ with $s_0 \leq s_1$
    \begin{align}
        \label{eq:arc_length_param_1}
        |f \circ \ut (s_0) - f\circ \ut(s_1)|
        \leq \int_{\paramt(s_0)}^{\paramt(s_1)} \localslope \circ u(r) \cdot |u'|(r) dr
        \underset{\scriptsize \eqref{eq: differential_of_paramt}}{=} \int_{s_0}^{s_1} \localslope \circ \ut (r)dr.
    \end{align}
    The second equation in~\eqref{eq:arc_length_param_1} and Proposition~\ref{prop:p_cms_energy_identity} imply that $\localslope \circ \ut \in L^1_{loc}([0, \sa_u))$; therefore (i) holds.

    \textit{(ii)}:
    Let $E$ be all points in $[0, \ta{u})$ at which the energy identity~\eqref{eq:energy_identity} does not hold, and let $N$ be all points in $[0, \sa_u)$ at which \eqref{eq: differential_of_paramt} does not hold, thus $N$ is $\ms{L}$-negligible.
    The set $\paramt^{-1}(E) = \params(E)$ is also $\ms{L}$-negligible because $\params$ is locally absolutely continuous on $[0, \ta{u})$ and $E$ is $\ms{L}$-negligible.
    The chain rule gives $(f \circ \tu)'(s) = - \localslope \circ \tu(s)$ for any $s \in [0, \sa_u) \setminus \left( \paramt^{-1}(E) \cup N \right)$.
    Thus we conclude.
\end{proof}

Utilizing the arc-length parametrization, we can show the convexity of $f$ along with $p$-curves of maximal slope.
\begin{lem}[Convexity of $f$ along with $p$-curves of maximal slope]
    \label{lem:convexity_of_f_circ_u_tilde} 
    Let one of the following hold:
    \begin{enumerate}[label=(\alph*), nolistsep]
        \item $\lambda=0$, $p_0 \in (1, +\infty)$ and $p \in (1, +\infty)$;
        \item $\lambda < 0$, $p_0 \in [2, +\infty)$ and $p \in (1, p_0]$.
    \end{enumerate}
    Suppose that $f$ is $(p_0, \lambda)$-convex, and let $u \in \pcms{p}{a}$.
    Then for any interval $I \subset [0, \sa_u)$ with the width $|I| \leq 1$ the map $s \mapsto f \circ \tu(s)$ is $(2, -\lambda^-)$-convex on $I$, i.e., for all $s_0, s_1 \in I$, letting $s_\theta \coloneqq (1 - \theta) s_0 + \theta s_1$, we have
    \begin{align*}
        f \circ \tu(s_\theta) \leq (1-\theta)f\circ \tu(s_0) + \theta f\circ \tu(s_1) + \lambda^{-}\theta(1-\theta)|s_1 - s_0|^2
    \end{align*}
    for every $\theta \in [0, 1]$, where $\tu$ is the arc-length parametrization of $u$ and the constant $\sa_u$ is as in Lemma~\ref{lem:arc_length_param}.
\end{lem}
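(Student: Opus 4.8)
The plan is to reduce the asserted $(2,-\lambda^-)$-convexity of $g := f\circ\tu$ to ordinary convexity of the auxiliary function $\phi(s) := g(s) + \lambda^- s^2$ on $I$ — these are equivalent via the identity $(1-\theta)s_0^2 + \theta s_1^2 - s_\theta^2 = \theta(1-\theta)|s_1-s_0|^2$ — and then to argue by contradiction, following the heuristic in the introduction but carried out in the arc-length parameter. All the ingredients are already available: $g$ is locally absolutely continuous, non-increasing, and satisfies $g'(s) = -\localslope(\tu(s))$ for $\ms{L}$-a.e.\ $s$ (Proposition~\ref{prop:property_of_arc_length_param}); $\tu$ is $1$-Lipschitz, so $d(\tu(s),\tu(s'))\le|s-s'|$ (Lemma~\ref{lem:arc_length_param}); $f$ is $(p_0,\lambda)$-convex; and the local slope obeys the global formula~\eqref{eq:global_formula}, equivalently $f(v)-f(w)\le\localslope(v)\,d(v,w)+\lambda^- d^{p_0}(v,w)$ for all $v,w$. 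When $\lambda=0$ one has $\lambda^-=0$, $\phi=g$, and the argument specializes to proving that $f\circ\tu$ is convex.

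I would then set up the contradiction and select two comparison points. Suppose $\phi$ fails to be convex on some $[s_0,s_1]\subset I$ with $s_1-s_0\le 1$; letting $L$ be the affine chord of $\phi$ through the endpoints, of slope $m$, the function $G:=\phi-L$ vanishes at $s_0,s_1$ and attains a strictly positive maximum $M$ at some interior $s^\ast$. Writing $s_\flat$ for the last zero of $G$ in $[s_0,s^\ast)$, the relation $\int_{s_\flat}^{s^\ast}G' = M>0$ forces $G'>0$ on a set of positive measure; intersecting with the full-measure set where $g'(s)=-\localslope(\tu(s))$, I pick $s_l\in(s_\flat,s^\ast)$ with $G(s_l)>0$, $\phi'(s_l)>m$, and the energy identity at $s_l$. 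Since $\phi-\psi=G-G(s_l)$ for the line $\psi$ through $(s_l,\phi(s_l))$ of slope $m$, this difference is positive just right of $s_l$ and equals $-G(s_l)<0$ at $s_1$, so the intermediate value theorem yields $s_r\in(s_l,s_1)$ at which the secant of $\phi$ from $s_l$ has slope exactly $m$. Denoting $m_g:=\frac{g(s_r)-g(s_l)}{s_r-s_l}$, one has $m=m_g+\lambda^-(s_l+s_r)$, and a short argument using $\phi'(s_l)\le 2\lambda^- s_l$ rules out $m_g=0$; hence $m_g<0$ and $D:=d(\tu(s_l),\tu(s_r))>0$.

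The contradiction itself comes next. Applying $(p_0,\lambda)$-convexity to $v_0=\tu(s_l)$, $v_1=\tu(s_r)$ produces a curve $\gamma$ with $d(\gamma_t,\gamma_0)\le tD$ and $f(\gamma_t)\le(1-t)g(s_l)+tg(s_r)+\lambda^- t(1-\psi(t))D^{p_0}$, which lower-bounds the descent $f(\gamma_0)-f(\gamma_t)$. Feeding $\gamma_t$ into the global-formula bound $f(\gamma_0)-f(\gamma_t)\le\localslope(\gamma_0)\,d(\gamma_0,\gamma_t)+\lambda^- d^{p_0}(\gamma_0,\gamma_t)$, using $d(\gamma_0,\gamma_t)\le tD$, dividing by $t$ and letting $t\downarrow 0$ (so $\psi(t)\to 0$ and $t^{p_0-1}\to 0$) gives $\localslope(\tu(s_l))\ge\frac{g(s_l)-g(s_r)}{D}-\lambda^- D^{p_0-1}$. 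Invoking $D\le s_r-s_l\le 1$ and $p_0\ge 2$, so that $D^{p_0-1}\le D\le s_r-s_l$, I conclude $\localslope(\tu(s_l))\ge -m_g-\lambda^-(s_r-s_l)=2\lambda^- s_l-m$, whereas $\phi'(s_l)>m$ forces $\localslope(\tu(s_l))=2\lambda^- s_l-\phi'(s_l)<2\lambda^- s_l-m$, the desired contradiction.

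The hard part will be this last quantitative comparison: the informal statement that ``$f$ decreases faster along $\gamma$ than the slope permits'' must be turned into the exact cancellation $-m_g-\lambda^-(s_r-s_l)=2\lambda^- s_l-m$, and this is precisely where the hypotheses $p_0\ge 2$ and $|I|\le 1$ enter, since they let me replace the error term $\lambda^- D^{p_0-1}$ by the linear term $\lambda^-(s_r-s_l)$ with the correct sign; when $\lambda=0$ this term is absent and no restriction on $p_0$ or $|I|$ is needed, matching hypothesis (a). Secondary points requiring care are the selection of $s_l$ as a point that is simultaneously a point of increase of $G$ and a Lebesgue point of the energy identity, and the verification that $D>0$, for which the strict positivity of the slope along the flow underlying the very existence of the arc-length parameter (Corollary~\ref{cor:lower_bound_of_local_slope}), together with $\lambda^->0$, is used.
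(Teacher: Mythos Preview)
Your proposal is correct and follows essentially the same contradiction scheme as the paper's proof: both select a point $s_l$ at which the defect from the $(2,-\lambda^-)$-convex interpolant has positive derivative and a later point $s_r$ at the same level, then use the global formula~\eqref{eq:global_formula} together with the $1$-Lipschitz bound $d(\tu(s_l),\tu(s_r))\le s_r-s_l\le 1$ and the hypothesis $p_0\ge 2$ to force $\localslope(\tu(s_l))<\localslope(\tu(s_l))$. Your repackaging via $\phi(s)=g(s)+\lambda^- s^2$ is cosmetic, and your detour through the convexity curve $\gamma$ and the limit $t\downarrow 0$ is redundant---the paper applies the global formula directly at $w=\tu(s_r)$, which already yields $\localslope(\tu(s_l))\ge\frac{g(s_l)-g(s_r)}{D}-\lambda^- D^{p_0-1}$ in one step---but neither changes the substance of the argument.
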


\begin{proof}
    Suppose that $f \circ \tu$ is not $(2, -\lambda^-)$-convex on $I$.
    There exist $s_0, s_1 \in I$ with $s_0 < s_1$ and $\theta_0 \in [0, 1]$ such that
    \begin{align}
        \label{eq:convexity_of_f_along_with_pcms_contradiction}
        f\circ \tu(s_{\theta_0}) > \varphi(\theta_0),
    \end{align}
    where $s_\theta \coloneqq (1-\theta)s_0 + \theta s_1$ and $\varphi(\theta) \coloneqq (1 - \theta) f\circ \tu(s_0) + \theta f \circ \tu (s_1) + \lambda^- \theta(1-\theta)|s_0-s_1|^2$ for any $\theta \in [0, 1]$.
    Letting
    \begin{align*}
        g(\theta) \coloneqq f\circ \tu(s_\theta) - \varphi(\theta),
    \end{align*}
    the function $[0,1] \ni \theta \mapsto g(\theta)$ is absolutely continuous because $f \circ \tu$ is so.
    We can take three points $\theta_l, \theta_\ast, \theta_r \in [0, 1]$ with $\theta_l < \theta_\ast \leq \theta_r$ such that $g(\theta_\ast) = \max_{\theta} g(\theta) > 0$,
    $g'(\theta_l) > 0$,
    $g(\theta_l) \in [0, g(\theta_\ast)]$, 
    and $g(\theta_r) = g(\theta_l)$,
    because of~\eqref{eq:convexity_of_f_along_with_pcms_contradiction}, the continuity of $g$ and the intermediate value theorem.
    It follows from $g'(\theta_l) > 0$ and a direct calculation that
    \begin{align}
        \label{eq:convexity_of_f_along_with_pcms_1}
        (s_1 - s_0)\localslope \circ \tu (s_{\theta_l}) < f\circ \tu(s_0) - f\circ \tu(s_1) - \lambda^-(1 - 2\theta_l)|s_0-s_1|^2.
    \end{align}
    By adding $-(\theta_r - \theta_l)\lambda^-(1-2\theta_l)|s_0-s_1|^2$ to both sides of $g(\theta_l)=g(\theta_r)$, one sees that
    \begin{align}
        \begin{split}
        \label{eq:convexity_of_f_along_with_pcms_2}
        (\theta_r - \theta_l)\left( f\circ \tu(s_0) - f\circ \tu(s_1) - \lambda^-(1-2\theta_l)|s_0 - s_1|^2 \right)\\
        = f\circ \tu(s_{\theta_l}) - f\circ \tu(s_{\theta_r}) -\lambda^-(\theta_r-\theta_l)^2|s_0 - s_1|^2.
        \end{split}
    \end{align}
    Recalling that $|I| \leq 1$, $1$-Lipschitz continuity of $\tu$ yields
    \begin{align}
        \label{eq:convexity_of_f_along_with_pcms_3}
        d(\tu(s_{\theta_l}), \tu(s_{\theta_r})) \leq (\theta_r - \theta_l)(s_1 - s_0) \leq 1.
    \end{align}
    Note that $\lambda^-(\theta_r - \theta_l)^2|s_0 - s_1|^2 = 0$ in~\eqref{eq:convexity_of_f_along_with_pcms_2} for the case (a).
    Thus we can assume that $p_0 \geq 2$, and then we have
    \begin{align*}
        d^{p_0}(\tu(s_{\theta_l}), \tu(s_{\theta_r}))
        \leq (\theta_r - \theta_l)^{p_0}(s_1 - s_0)^{p_0}
        \leq (\theta_r - \theta_l)^2(s_1 - s_0)^2.
    \end{align*}
    Combining \eqref{eq:convexity_of_f_along_with_pcms_1} and \eqref{eq:convexity_of_f_along_with_pcms_2} with this inequality, the global formula of the local slope gives
    \begin{align*}
        \localslope \circ \tu (s_{\theta_l})
        < \localslope \circ u(s_{\theta_l}) \frac{d(\tu(s_{\theta_l}), \tu(s_{\theta_r}))}{(s_1 - s_0)(\theta_r - \theta_l)}
        \leq \localslope \circ \tu (s_{\theta_l}).
    \end{align*}
    This is a contradiction.
\end{proof}

Recalling that a convex function $g \colon [a, b] \to \mb{R}$ is differentiable at $\ms{L}$-a.e. points in $(a, b)$ and its derivative bounded from below locally in $(a, b)$.
If $\phi$ is $(2, -\lambda^-)$-convex on $[a, b]$, then the map $x \mapsto \phi(x) + \lambda^-x^2$ is convex on $[a, b]$.
Therefore $\phi$ is differentiable at $\ms{L}$-a.e. points in $(a, b)$, and its derivative $\phi'$ is bounded from below locally. 
This fact and the local $(2, -\lambda^-)$-convexity of $f\circ \tu$ yield the local slope's bound from above along $p$-curves of maximal slope.
Namely:
\begin{lem}[Slope's upper bound along with $p$-curves of maximal slope]
\label{lem:upper_bound_of_local_slope}
    Let one of the following hold:
    \begin{enumerate}[label=(\alph*), nolistsep]
        \item $\lambda=0$, $p_0 \in (1, +\infty)$ and $p \in (1, +\infty)$;
        \item $\lambda < 0$, $p_0 \in [2, +\infty)$ and $p \in (1, p_0]$.
    \end{enumerate}
    Suppose that $f$ is $(p_0, \lambda)$-convex, and let $u \in \pcms{p}{a}$.
    Then for any $[T_1, T_2] \subset (0, \ta{u})$ there exists a constant $M > 0$ such that
    \begin{align}
        \label{eq:upper_bound_of_local_slope}
        \localslopeu (t) \leq M \quad \text{for every } t \in [T_1, T_2].
    \end{align}
    Moreover, if $\lambda = 0$, then the map $t \mapsto |u'|(t)$ is $\ms{L}\text{-a.e.}$ non-increasing in $[0, \ta{u})$.
\end{lem}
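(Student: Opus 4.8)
The plan is to pass to the arc-length parametrization $\tu = u\circ\paramt$ of Lemma~\ref{lem:arc_length_param}, where the convexity of Lemma~\ref{lem:convexity_of_f_circ_u_tilde} is available, and then transport the resulting bound back to the original time variable. Set $\eta(s) \coloneqq f\circ\tu(s) + \lambda^- s^2$. By Lemma~\ref{lem:convexity_of_f_circ_u_tilde}, under either hypothesis (a) or (b) the map $s\mapsto f\circ\tu(s)$ is $(2,-\lambda^-)$-convex on every subinterval of $[0,\sa_u)$ of width at most $1$, which is exactly the statement that $\eta$ is convex on every such subinterval. Since $f\circ\tu$ is continuous (indeed locally absolutely continuous, by Proposition~\ref{prop:property_of_arc_length_param}), and a continuous function that is convex in a neighborhood of each point of an interval is convex on the whole interval, $\eta$ is convex on all of $(0,\sa_u)$. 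Moreover Proposition~\ref{prop:property_of_arc_length_param} gives $(f\circ\tu)'(s) = -\localslope\circ\tu(s)$ for $\ms{L}$-a.e.\ $s$, whence $\eta'(s) = -\localslope\circ\tu(s) + 2\lambda^- s$ at $\ms{L}$-a.e.\ $s\in(0,\sa_u)$.

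For the main bound, fix $[T_1,T_2]\subset(0,\ta{u})$ and put $S_i\coloneqq\params(T_i)$, so that $0 < S_1\le S_2 < \sa_u$ because $\params$ is a strictly increasing continuous bijection of $[0,\ta{u})$ onto $[0,\sa_u)$. Choose an interior point $s_0\in(0,S_1)$ at which the right derivative $\eta'_+(s_0)$ is finite; this is possible since a convex function on an open interval has a finite right derivative at every point. Monotonicity of the derivative of the convex function $\eta$ gives $\eta'(s)\ge\eta'_+(s_0)$ at $\ms{L}$-a.e.\ $s\in[S_1,S_2]$, and hence $\localslope\circ\tu(s) = 2\lambda^- s - \eta'(s) \le 2\lambda^- S_2 - \eta'_+(s_0) =: M$ for $\ms{L}$-a.e.\ such $s$, where $\lambda^-\ge0$ and $s\le S_2$ were used. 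Because $\localslope$ is lower semicontinuous (Proposition~\ref{prop:local_slope_is_a_strong_upper_gradient}) and $\tu$ is continuous, $\localslope\circ\tu$ is lower semicontinuous; an $\ms{L}$-a.e.\ upper bound on a lower semicontinuous function on an interval is in fact an everywhere bound, since at each point one may approximate from within the full-measure set where the bound holds. Thus $\localslope\circ\tu(s)\le M$ for every $s\in[S_1,S_2]$, and $M>0$ because $\localslope\circ\tu$ is bounded below by the positive constant of Corollary~\ref{cor:lower_bound_of_local_slope}. Transporting back through $\localslopeu(t) = \localslope\circ u(\paramt(\params(t))) = \localslope\circ\tu(\params(t))$ yields~\eqref{eq:upper_bound_of_local_slope}.

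For the final assertion take $\lambda=0$, so $\lambda^-=0$ and $\eta = f\circ\tu$ is convex on $(0,\sa_u)$. Then its $\ms{L}$-a.e.\ derivative $-\localslope\circ\tu$ is non-decreasing, i.e.\ $\localslope\circ\tu$ agrees $\ms{L}$-a.e.\ with a non-increasing function of $s$. Since $\params$ is an increasing bijection whose inverse $\paramt$ is locally Lipschitz (Lemma~\ref{lem:arc_length_param}) and therefore maps $\ms{L}$-null sets to $\ms{L}$-null sets, the function $t\mapsto\localslopeu(t) = \localslope\circ\tu(\params(t))$ is $\ms{L}$-a.e.\ non-increasing on $[0,\ta{u})$. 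Finally the energy identity~\eqref{eq:energy_identity} gives $|u'|(t) = (\localslopeu(t))^{q-1}$ for $\ms{L}$-a.e.\ $t$, using $q/p = q-1$, and $x\mapsto x^{q-1}$ is increasing for $q>1$; hence $t\mapsto|u'|(t)$ is $\ms{L}$-a.e.\ non-increasing on $[0,\ta{u})$.

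The main obstacle is the passage from an $\ms{L}$-a.e.\ bound -- which is all that convexity of $\eta$ together with the a.e.\ identity $(f\circ\tu)' = -\localslope\circ\tu$ directly provides -- to the pointwise bound claimed for \emph{every} $t\in[T_1,T_2]$; this is precisely where lower semicontinuity of the slope is indispensable, and one must verify that the exceptional null set does not obstruct the approximation used in the lower-semicontinuity step. A secondary point needing care is the local-to-global convexity of $\eta$, since Lemma~\ref{lem:convexity_of_f_circ_u_tilde} only supplies convexity on intervals of width at most one.
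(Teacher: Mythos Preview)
Your proof is correct and follows the same route as the paper's: pass to the arc-length reparametrization, exploit the local $(2,-\lambda^-)$-convexity of $f\circ\tu$ from Lemma~\ref{lem:convexity_of_f_circ_u_tilde} together with Proposition~\ref{prop:property_of_arc_length_param}, and pull the resulting bound back through $\params$. The only organizational difference is that you glue the local convexity into global convexity of $\eta$ once at the outset and use it for both conclusions, whereas the paper covers $\params([T_1,T_2])$ by finitely many intervals of width at most $1$ for the first claim and invokes the gluing only for the $\lambda=0$ monotonicity statement; your explicit appeal to lower semicontinuity of $\localslope$ to upgrade the $\ms{L}$-a.e.\ bound to a bound for \emph{every} $t\in[T_1,T_2]$ in fact fills in a step that the paper's own argument leaves tacit.
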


\begin{proof}
    Let $J \coloneqq [T_1, T_2] \subset (0, \ta{u})$, and let $\paramt$ be the arc-length parameter and $\params$ be the inverse map of $\paramt$.
    By the continuity of $\params$, we can cover $\params(J)$ by finite intervals $\{I_n\}_{n=1}^N$ such that $I_n \subset (0, \sa_u)$ and the width $|I_n| \leq 1$.
    The $(2, -\lambda^-)$-convexity of $f \circ \tu$ on each $I_n$ gives for each $I_n$ a constant $M_n \in \mb{R}$ satisfying
    \begin{align*}
        - \localslope \circ \tu (s) 
        \underset{\scriptsize \text{Prop.}~\ref{prop:property_of_arc_length_param}}{=}(f\circ\tu)'(s) \geq M_n \quad \text{for } \ms{L}\text{-a.e. in } s \in I_n;
    \end{align*}
    hence letting $M \coloneqq \max\{-M_1, \ldots, -M_N\}$ we have
    \begin{align*}
        \localslope \circ \tu (s) \leq M \quad \text{for } \ms{L}\text{-a.e. in } s \in \params(J).
    \end{align*}
    Let $E$ be the set of all points in $\params(J)$ at which this inequality does not hold, thus
    \begin{align*}
        \localslopeu(t) \leq M \quad \text{for every } t \in J \setminus \params^{-1}(E).
    \end{align*}
    The set $\params^{-1}(E) = \paramt(E)$ is $\ms{L}$-negligible because $E$ is so and $\paramt$ is locally absolutely continuous.
    This completes the proof of \eqref{eq:upper_bound_of_local_slope}.

    If $\lambda = 0$, we can see that $f \circ \tu$ is convex on $[0, \sa_u)$ by gluing the local convexity of it.
    Therefore $\localslope \circ \tu$ is $\ms{L}\text{-a.e.}$ non-increasing on $[0, \sa_u)$ from Proposition~\ref{prop:property_of_arc_length_param}.
    Letting $E \subset [0, \sa_u)$ be that $\localslope \circ \tu$ is non-increasing on $E$, we can see that $\localslope \circ u $ is non-increasing on $[0, \ta{u}) \setminus \params^{-1}(E^c)$.
    The set $\params^{-1}(E^c)$ is $\ms{L}$-negligible because $E^c$ is so and $\params^{-1}=\paramt$ is locally absolutely continuous;
    thus the energy identity~\eqref{eq:energy_identity} gives the desired result.
\end{proof}

\section{Proofs of main theorems}
\label{sec:proofs_of_main_theorems}
This section provides the proof of the main theorems: Theorems~\ref{thm:p_cms_to_p_dash_cms_lambda_gt_0}, \ref{thm:p_cms_to_p_dash_cms_lambda_st_0} and \ref{thm:regularizing_effects}.
Let $(\ms{S},d)$ be a complete metric space, $f \colon \ms{S} \to (-\infty, +\infty]$ be a proper and lower semicontinuous, and $u_0 \in D(f)$ throughout this section.
In what follows, without loss of generality, we can assume that $\lambda \leq 0$ excluding Theorem~\ref{thm:regularizing_effects}.

We begin by constructing a parameter transformation for $p$-curves of maximal slope whose domain is not necessarily unbounded.
\begin{lem}[Parameter transformation from $p$-curve of maximal slope to $p'$-curve of maximal slope]
    \label{lem:param_trans_domain_possibly_bounded}
    Let one of the following hold:
    \begin{enumerate}[label=(\alph*)]
        \item $\lambda=0$, $p_0 \in (1, +\infty)$ and $p \in (1, +\infty)$;
        \item $\lambda < 0$, $p_0 \in [2, +\infty)$ and $p \in (1, p_0]$.
    \end{enumerate}
    Suppose that $f$ is $(p_0, \lambda)$-convex, and let $u \in \pcms{p}{a}$.
    Then for any exponent $p' \in (1, +\infty)$ if $\lambda = 0$ or any exponent $p' \in (1, p_0]$ if $\lambda < 0$ there exist a strictly increasing absolutely continuous map $\tilde{\params}_{p \to p'} \colon [0, \ta{u}) \to [0, \sa_{p \to p'})$ and its inverse map $\tilde{\paramt}_{p \to p'} \colon [0, \sa_{p \to p'}) \to [0, \ta{u})$ with the following: 
    \begin{enumerate}
        \item the map $t \mapsto \tilde{\params}_{p \to p'}(t)$ is locally bi-Lipschitz on $(0, \ta{u})$ and satisfies
        \begin{align}
            \label{eq:derivative_of_params}
            \tilde{\params}'_{p \to p'}(t) = |u'|^\alpha(t) > 0,\quad \ms{L}\text{-a.e. in } t \in [0, \ta{u});
        \end{align}
        \item the map $s \mapsto \tilde{\paramt}_{p \to p'}(s)$ is locally bi-Lipschitz on $(0, \sap{p}{p'})$ and satisfies 
        \begin{align}
            \label{eq:derivative_of_paramt}
            \tilde{\paramt}'_{p \to p'}(s) = |u'|^{-\alpha}(\tilde{\paramt}_{p \to p'}(s)),\quad \ms{L}\text{-a.e. in } s \in [0, \sap{p}{p'});
        \end{align}
        \item the curve $u_{p'} \coloneqq u \circ \tilde{\paramt}_{p \to p'}$ belongs to $\text{CMS}_{p'}([0, \sap{p}{p'}); u_0)$;
    \end{enumerate}
    where the exponent $\alpha$ is defined to be
    $
        \alpha \coloneqq 1 - \frac{p}{q} \cdot \frac{q'}{p'}  
    $,
    and the constant $\sap{p}{p'}$ is defined by 
    $ \sap{p}{p'} \coloneqq \int_{0}^{\ta{u}} |u'|^\alpha(r)dr $.
    \vspace{1.5mm}

    Moreover, by considering the transformations $\tilde{\params}_{p'\to p}$ and $\tilde{\paramt}_{p' \to p}$ of $u_{p'}$, we have
    \begin{enumerate}
        \setcounter{enumi}{3}
        \item (dual relation) $\ta{u_{p'}} = \sap{p}{p'}$ and $\sap{p'}{p} = \ta{u}$;
        \item (inverse relation) $\tilde{\params}_{p' \to p} = \tilde{\paramt}_{p \to p'}$ and $\tilde{\paramt}_{p' \to p} = \tilde{\params}_{p \to p'}$.
    \end{enumerate}
\end{lem}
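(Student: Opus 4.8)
The plan is to construct the reparametrization $\tilde{\params}_{p\to p'}$ explicitly by the formula $\tilde{\params}_{p\to p'}(t)\coloneqq\int_0^t|u'|^\alpha(r)\,dr$ with the stated exponent $\alpha=1-\frac{p}{q}\cdot\frac{q'}{p'}$, and then verify each item in turn. First I would establish that this integral is well-defined and that $\tilde{\params}_{p\to p'}$ is strictly increasing and locally bi-Lipschitz on $(0,\ta{u})$: by Corollary~\ref{cor:lower_bound_of_local_slope} together with the energy identity~\eqref{eq:energy_identity}, on any $[0,T]\subset[0,\ta{u})$ the speed $|u'|$ is bounded below by a positive constant $\ms{L}$-a.e., and by Lemma~\ref{lem:upper_bound_of_local_slope} (via $|u'|^p=\localslope^q\circ u$) it is bounded above on any $[T_1,T_2]\subset(0,\ta{u})$. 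These two-sided bounds make $|u'|^\alpha$ bounded above and below by positive constants on compact subintervals of $(0,\ta{u})$, which yields the local bi-Lipschitz property and~\eqref{eq:derivative_of_params}; differentiating the integral gives the a.e.\ derivative, proving (i). Item (ii) then follows by the standard inverse-function computation as in Lemma~\ref{lem:arc_length_param}, giving~\eqref{eq:derivative_of_paramt}.

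The main substance is item (iii): showing $u_{p'}\coloneqq u\circ\tilde{\paramt}_{p\to p'}$ is a $p'$-curve of maximal slope. Here I would verify the defining energy inequality~\eqref{eq:p_cms} for $u_{p'}$ directly. Using the chain rule for metric derivatives one computes $|u_{p'}'|(s)=|u'|(\tilde{\paramt}_{p\to p'}(s))\cdot\tilde{\paramt}'_{p\to p'}(s)=|u'|^{1-\alpha}(\tilde{\paramt}_{p\to p'}(s))$ a.e., and likewise $(f\circ u_{p'})'(s)=(f\circ u)'(\tilde{\paramt})\cdot\tilde{\paramt}'$. Substituting the energy identity $(f\circ u)'=-|u'|^p=-\localslope^q\circ u$ into the expression for $(f\circ u_{p'})'$ and factoring appropriately, the exponent $\alpha$ is chosen precisely so that the resulting quantity matches $-\frac{1}{p'}|u_{p'}'|^{p'}-\frac{1}{q'}\localslope^{q'}\circ u_{p'}$; concretely one checks $|u_{p'}'|^{p'}(s)=\localslope^{q'}\circ u_{p'}(s)$ holds a.e., which is exactly the $p'$-analogue of the energy balance, and then the inequality~\eqref{eq:p_cms} holds with equality. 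I would also confirm $u_{p'}(0)=u_0$ and that $u_{p'}\in AC_{loc}$, and that $f\circ u_{p'}$ is non-increasing (inherited from $f\circ u$ since $\tilde{\paramt}_{p\to p'}$ is increasing).

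For the dual and inverse relations (iv) and (v), the key observation is that the arc-length-type structure is symmetric under swapping $(p,p')$: the slope $\localslope\circ u_{p'}$ is positive exactly on $[0,\sap{p}{p'})$ (since $\localslope\circ u_{p'}(s)=\localslope\circ u(\tilde{\paramt}(s))$ and $\localslope\circ u>0$ on $[0,\ta{u})$), so $\ta{u_{p'}}=\sap{p}{p'}$. To get $\sap{p'}{p}=\ta{u}$ and the inverse relation, I would compute the exponent $\alpha'$ associated to the reverse transformation $p'\to p$ and check the identity $\alpha'=-\alpha/(1-\alpha)$ (equivalently that the two exponents are conjugate in the relevant sense), so that $|u_{p'}'|^{\alpha'}=|u'|^{-\alpha}\circ\tilde{\paramt}$; then $\tilde{\params}_{p'\to p}(s)=\int_0^s|u_{p'}'|^{\alpha'}(r)\,dr$ reproduces $\tilde{\paramt}_{p\to p'}(s)$ by the change of variables $r=\tilde{\params}_{p\to p'}(\sigma)$, giving $\sap{p'}{p}=\ta{u}$ and $\tilde{\params}_{p'\to p}=\tilde{\paramt}_{p\to p'}$; the other inverse identity follows by taking inverses. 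The hard part will be the bookkeeping with the exponents $\alpha,\alpha'$ and the conjugate exponents $q,q'$: one must track the precise algebraic relations (particularly that $\alpha$ makes the $p'$-energy balance close exactly, and that the reverse exponent is the negative-reciprocal-shift of $\alpha$) so that the change-of-variables computations telescope cleanly and the inverse relation is genuinely an identity rather than merely an a.e.\ equality. I expect the two-sided bounds on $|u'|$ from Corollaries and Lemmas of Section~\ref{sec:bounds_of_local_slope} to do all the analytic heavy lifting, leaving the exponent algebra as the only delicate point.
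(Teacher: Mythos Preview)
Your proposal is correct and follows essentially the same approach as the paper: define $\tilde{\params}_{p\to p'}(t)=\int_0^t|u'|^\alpha(r)\,dr$, use the two-sided slope bounds from Corollary~\ref{cor:lower_bound_of_local_slope} and Lemma~\ref{lem:upper_bound_of_local_slope} to get local bi-Lipschitz behaviour, verify the $p'$-energy identity for $u_{p'}$ via the chain rule, and for (iv)--(v) use the exponent relation (the paper writes it as $(1-\alpha)\beta=-\alpha$ with $\beta=1-\frac{p'}{q'}\cdot\frac{q}{p}$, which is equivalent to your $\alpha'=-\alpha/(1-\alpha)$) together with change of variables. The only small point you should make explicit is that $\alpha<1$ always, so that when $\alpha\geq 0$ the local integrability of $|u'|^\alpha$ near $t=0$ follows from $|u'|\in L^p_{loc}$ (Proposition~\ref{prop:p_cms_energy_identity}) rather than from the upper slope bound, which is only available away from $0$.
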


\begin{proof}
    We define the map $\tilde{\params}_{p \to p'} \colon [0, \ta{u}) \to [0, \sap{p}{p'})$ to be
    \begin{align*}
        \label{eq:bounds_derivative_of_params}
        \tilde{\params}_{p \to p'}(t) \coloneqq \int_{0}^t |u'|^\alpha(r)dr.
    \end{align*}

    Let us check (i) and (ii).
    Note that $\alpha \in (-\infty, 1)$.
    In the case $\alpha \geq 0$ (resp. $\alpha < 0$), the local integrability of $|u'|$ - Proposition~\ref{prop:p_cms_energy_identity}- (resp. the local positive lower bound of $\localslope \circ u$  -Corollary~\ref{cor:lower_bound_of_local_slope}-) implies that $\tilde{\params}$ is locally absolutely continuous on $[0, \ta{u})$.
    For any $[T_1, T_2] \subset (0, \ta{u})$ the local lower and upper bound of $\localslope \circ u$ and the energy identity~\eqref{eq:energy_identity} show the existence of constants $c_0, c_1 > 0$ such that
    \begin{align}
        c_0 \leq |u'|^\alpha(t) \leq c_1,\quad \ms{L}\text{-a.e. in } t \in [T_1, T_2];
    \end{align}
    therefore the map $t \mapsto \tilde{\params}_{p \to p'}(t)$ is strictly increasing and locally bi-Lipshitz on $(0, \ta{u})$.
    The inequality~\eqref{eq:bounds_derivative_of_params} implies \eqref{eq:derivative_of_params}.
    The inverse map $\tilde{\paramt}_{p \to p'} \colon [0, \sap{p}{p'}) \to [0, \ta{u})$ of $\tilde{\params}_{p \to p'}$ is also locally bi-Lipschitz on $(0, \sap{p}{p'})$.
    Let $E \subset [0, \ta{u})$ be the set of all points at which \eqref{eq:derivative_of_params} does not hold, and let $N \subset [0, \sap{p}{p'})$ be the set of all points at which $\tilde{\paramt}_{p \to p'}$ is not differentiable.
    The set $N$ is clearly $\ms{L}$-negligible, and $\tilde{\paramt}^{-1}_{p \to p'}(E) = \tilde{\params}_{p \to p'}(E)$ is so because $\tilde{\params}_{p\to p'}$ is locally absolutely continuous and $E$ is  $\ms{L}$-negligible.
    For any $s \in [0, \sap{p}{p'}) \setminus (\tilde{\paramt}^{-1}_{p \to p'}(E) \cup N)$ the inverse function rule gives
    \begin{align*}
            \tilde{\paramt}'_{p \to p'}(s) = |u'|^{-\alpha}(\tilde{\paramt}_{p \to p'}(s)).
    \end{align*}
    This completes our discussion of (i) and (ii).

    To show (iii), let us check that the curve $u_{p'} \coloneqq u \circ \tilde{\paramt}_{p \to p'} \colon [0, \sap{p}{p'}) \to \ms{S}$ satisfies the conditions as $p'$-CMS.
    It is clear that $f \circ u_{p'} \colon [0, \sap{p}{p'}) \to \mb{R}$ is non-increasing.
    To check \eqref{eq:p_cms}, it is enough to show that the energy identity
    \begin{align}
        \label{eq:paramtrans_energy_identity}
        (f \circ u_{p'})'(s) = -\localslope \circ u_{p'}(s) \cdot |u_{p'}'|(s) = - |u_{p'}'|^{p'}(s) = -\localslope^{q'} \circ u_{p'} (s)
    \end{align}
    holds for $\ms{L}\text{-a.e. } s$ in $[0,\sap{p}{p'})$.
    Let $E_1$ be the set of all points in $[0, \ta{u})$ at which the energy identity~\eqref{eq:energy_identity} does not hold, and let $N_1$ be the set of all points in $[0, \sap{p}{p'})$ at which $\tilde{\paramt}_{p \to p'}$ does not satisfy \eqref{eq:derivative_of_paramt}.
    The set $N \coloneqq \tilde{\paramt}_{p \to p'}^{-1}(E_1) \cup N_1$ is $\ms{L}$-negligible because $\tilde{\params}_{p \to p'}$ sends $\ms{L}$-negligible sets to ones.
    For any point $s \in [0, \sap{p}{p'}) \setminus N$, \eqref{eq:energy_identity} gives \eqref{eq:paramtrans_energy_identity} because of
    \begin{align}
        |u_{p'}'|(s)
        &= \lim_{s'\to s}\frac{d(u(\tilde{\paramt}_{p \to p'}(s)), u(\tilde{\paramt}_{p \to p'}(s'))}{|\tilde{\paramt}_{p \to p'}(s)-\tilde{\paramt}_{p \to p'}(s')|}\cdot\frac{|\tilde{\paramt}_{p \to p'}(s) - \tilde{\paramt}_{p \to p'}(s')|}{|s - s'|} \notag \\
        &= |u'|(\tilde{\paramt}_{p \to p'}(s)) \cdot |u'|^{-\alpha}(\tilde{\paramt}_{p \to p'}(s)) \label{eq:u_tilde_and_u_prime}\\
        &= |u'|^{\frac{p}{q}\cdot\frac{q'}{p'}}(\tilde{\paramt}_{p \to p'}(s))
        = \localslope^{\frac{q'}{p'}}\circ u_{p'} (s) \notag
    \end{align}
    and
    \begin{align*}
        (f\circ u_{p'})'(s) = (f \circ u)'(\tilde{\paramt}_{p \to p'}(s))\cdot \tilde{\paramt}_{p \to p'}'(s)
        = -\localslope\circ u_{p'}(s) \cdot |u_{p'}'|(s).
    \end{align*}
    The monotonicity and the local Lipschitz continuity of $\tilde{\paramt}_{p \to p'}$ allow us to apply the change of variables; hence for any $s_1, s_2 \in [0, \sap{p}{p'})$ with $s_1 \leq s_2$
    \begin{align}
        \label{eq:check_ut_acloc}
        d(u_{p'}(s_1), u_{p'}(s_2))
        \leq \int_{\tilde{\paramt}_{p \to p'}(s_1)}^{\tilde{\paramt}_{p \to p'}(s_2)}|u'|(r)dr
        \underset{\eqref{eq:u_tilde_and_u_prime}}{=} \int_{s_1}^{s_2} |u_{p'}'|(r)dr.
    \end{align}
    Since the map $s \mapsto |u_{p'}'|(s)$ belongs to $L_{loc}^1([0, \sap{p}{p'}))$, $u_{p'}$ is a member of $AC_{loc}([0, \sap{p}{p'}); \ms{S})$.
    This completes the proof of (iii) in Lemma~\ref{lem:param_trans_domain_possibly_bounded}.

    Finally, let us check that (iv) and (v) hold.
    It is clear that $\ta{u_{p'}} = \sap{p}{p'}$.
    Letting $\beta \coloneqq 1 - \frac{p'}{q'}\cdot\frac{q}{p}$ one sees that for every $t \in [0, \ta{u_{p'}})$
    \begin{align*}
        &\tilde{\params}_{p' \to p}(t)
        \coloneqq \int_{0}^t |u_{p'}'|^{\beta}(r)dr
        \underset{\eqref{eq:u_tilde_and_u_prime}}{=} \int_0^t |u'|^{(1-\alpha)(\beta)}(\tilde{\paramt}_{p \to p'}(r))dr\\
        &= \int_0^t |u'|^{-\alpha}(\tilde{\paramt}_{p \to p'}(r))dr
        \underset{\text{(ii)}}{=} \int_0^t \tilde{\paramt}'_{p \to p'}(r)dr
        \underset{\text{(ii)}}{=} \tilde{\paramt}_{p \to p'}(t).
    \end{align*}
    Thus we have $\sap{p'}{p} = \ta{u}$ and the inverse map $\tilde{\paramt}_{p' \to p}$ of $\tilde{\params}_{p' \to p}$ is equal to $\tilde{\params}_{p \to p'}$.
\end{proof}

We are now in a position to prove Theorem~\ref{thm:p_cms_to_p_dash_cms_lambda_gt_0}.
Note that the constant $\ta{u}$ in this theorem is given by \eqref{eq:t_ast_u}.

\begin{proof}[proof of Theorem~\ref{thm:p_cms_to_p_dash_cms_lambda_gt_0}]
We write $\sap{p}{p'}$ in Lemma~\ref{lem:param_trans_domain_possibly_bounded} as $S^\ast_u$ for simplicity.
Let $\paramt_{p \to p'} \coloneqq \tilde{\paramt}_{p\to p'} \colon [0, S^\ast_u) \to [0, \ta{u})$ as in Lemma~\ref{lem:param_trans_domain_possibly_bounded}.

Firts, we extend $\paramt_{p \to p'}$ over $[0, +\infty)$, and check (i) and (ii) in Theorem~\ref{thm:p_cms_to_p_dash_cms_lambda_gt_0}.
We divide the proof into the following four cases, (A) - (D).

\begin{proof}[(A) The case $S^\ast_u = +\infty$ and $\ta{u} = +\infty$]
    In this case, the map $s \mapsto \paramt_{p\to p'}(s)$ is already defined over $[0, +\infty)$.
    From Lemma~\ref{lem:param_trans_domain_possibly_bounded}, we have (i) and (ii).
    \renewcommand{\qedsymbol}{}
\end{proof}

\begin{proof}[(B) The case $S^\ast_u = +\infty$ and $\ta{u} < +\infty$]
    Similarly as case (A), (i) and (ii) hold; however we extend $\paramt_{p\to p'}$ over $[0, +\infty]$ by $\paramt_{p \to p'}(+\infty) \coloneqq \ta{u}$, which is needed in the proof of (iii).
    \renewcommand{\qedsymbol}{}
\end{proof}

\begin{proof}[(C) The case $S^\ast_u < +\infty$ and $\ta{u} < +\infty$]
    We extend $\paramt_{p\to p'}$ over $[0, +\infty)$ by satisfying 
    \begin{align*}
     \paramt_{p \to p'}(s) \coloneqq \ta{u} \quad \text{for any } s \in [S^\ast_u, +\infty).    
    \end{align*}
    It follows from Lemma~\ref{lem:param_trans_domain_possibly_bounded} that (ii) holds.
    Let us check (i).
    It is clear that $f\circ u_{p'}$ is non-increasing.
    To check \eqref{eq:p_cms}, it is enough to show that the energy identity
    \begin{align*}
        (f\circ u_{p'})'(s) = - \localslope \circ u_{p'}(s) |u_{p'}'|(s) = - |u_{p'}'|^{p'}(s) = -\localslope^{q'} \circ u_{p'}(s)
    \end{align*}
    holds for $\ms{L}\text{-a.e.}$ in $s \in [S^\ast_u, +\infty)$;
    however this equality holds for any $s \in [S^\ast_u, +\infty)$ since $\localslope \circ u_{p'}(s) = \localslope \circ u(\ta{u}) = 0$ and $u_{p'}$ is stationary on $[S^\ast_u, +\infty)$.
    Finally, we show that $u_{p'} \in AC_{loc}([0, +\infty); \ms{S})$.
    Since $\tilde{\paramt}_{p \to p'}$ is locally Lipshictz on $(0, \sa_u)$, the change of variable yields that for any $s_1, s_2 \in (0, \sa_u)$ with $s_1 \leq s_2$
    \begin{align*}
        d(u_{p'}(s_1), u_{p'}(s_2))
        \leq \int_{\paramt_{p \to p'}(s_1)}^{\paramt_{p\to p'}(s_2)} |u'|(r)dr
        = \int_{s_1}^{s_2}|u_{p'}'|(r)dr.
    \end{align*}
    Since $u_{p'}$ is stationary on $[S^\ast_u, +\infty)$, the monotone convergence theorem gives for any $s_1, s_2 \in [0, +\infty)$ with $s_1 \leq s_2$
    \begin{align*}
        d(u_{p'}(s_1), u_{p'}(s_2)) \leq \int_{s_1}^{s_2} |u_{p'}'|(r)dr;
    \end{align*}
    in addition the local integrability of $|u'|$ on $[0, +\infty)$ shows $|u_{p'}'| \in L^1([0, +\infty))$.
    This completes the proof of (i).
    \renewcommand{\qedsymbol}{}
\end{proof}

\begin{proof}[(D) The case $S^\ast_u < +\infty$ and $\ta{u} = +\infty$]
    We extend $\paramt_{p\to p'}$ over $[0, +\infty)$ by satisfying
    \begin{align*}
        \paramt_{p \to p'}(s) \coloneqq \ta{u} \quad \text{for any } s \in [S^\ast_u, +\infty).    
    \end{align*}
    Then it is trivial that the statement (ii) holds.

    To define $u_{p'}$ on $[S^\ast_u, +\infty)$ and check (i), we show that 
    \begin{align}
        \begin{split}
            \label{cond:existence_point_at_infinity}
            &|u'| \in L^1([0, +\infty)),
            \text{ there exists } u(\ta{u}) \coloneqq \lim_{t \:\uparrow\: \ta{u}}u(t)\\
            &\text{ and } \localslope (u(\ta{u})) = 0.
        \end{split}
    \end{align}
    The map $t \mapsto |u'|(t)$ is $\ms{L}\text{-a.e.}$ non-increasing -Lemma~\ref{lem:upper_bound_of_local_slope}- and $S^\ast_u < + \infty$ so that $\alpha > 0$, which gives
    \begin{align}
        \label{eq:metric_derivative_to_zero}
        \liminf_{t\to+\infty}|u'|(t) = 0.
    \end{align}
    Hence there exists $t_0 \in [0, +\infty)$ such that $|u'|(t) \leq 1$ for $\ms{L}\text{-a.e.}$ in $t \in [t_0, +\infty)$.
    Note that $\alpha < 1$ and $\sa_u < +\infty$ the local integrability of $|u'|$ yields
    \begin{align*}
        \int_{0}^{+\infty} |u'|(r)dr \leq \int_{0}^{t_0} |u'|(r) dr + \int_{t_0}^{+\infty} |u'|^\alpha(r)dr < +\infty.
    \end{align*}
    This implies that $u$ is absolutely continuous on $[0, +\infty)$; therefore there exists $u(\ta{u})$ because $\ms{S}$ is complete.
    Using the lower semicontinuity of $\localslope \circ u$ and \eqref{eq:metric_derivative_to_zero}, the energy identity~\eqref{eq:energy_identity} induces $\localslope(u(\ta{u}))=0$. 
    This ends the check of~\eqref{cond:existence_point_at_infinity}.

    From \eqref{cond:existence_point_at_infinity}, we can extend $u$ over $[0, \ta{u}]$; thus we can define $u_{p'} \coloneqq u \circ \paramt_{p \to p'}$ on $[0, +\infty)$.
    It is clear that $f \circ u_{p'}$ is non-increasing.
    Similarly as in case (C), the facts that $u_{p'}$ is stationary and $\localslope \circ u_{p'} \equiv  0$ on $[S^\ast_u, +\infty)$ induce \eqref{eq:p_cms}, and the fact $|u'| \in L^1([0, +\infty))$ yields $u_{p'} \in AC([0, +\infty); \ms{S})$.
    This completes the proof of (i).
    \renewcommand{\qedsymbol}{}
\end{proof}

In the above, we end the check of the statements (i) and (ii) in Theorem~\ref{thm:p_cms_to_p_dash_cms_lambda_gt_0}.
Finally, let us prove (iii): we show it only in case (B) because we can prove it similarly in the other cases.

From the definition of $u_{p'}$, we can easily check that
\begin{align*}
    \ta{u_{p'}} = S^\ast_u = +\infty.
\end{align*}
From the definition of $u_{p'}$, we have
\begin{align*}
    S^\ast_{u_{p'}} \coloneqq S^\ast_{p' \to p, u_{p'}} = \ta{u} < +\infty
\end{align*}
and there exists the limit
\begin{align*}
    \lim_{t \:\uparrow\: \ta{u_{p'}}}u_{p'}(t) = \lim_{t \:\uparrow\: S^\ast_u} u \circ \paramt_{p \to p'}(t) = u(\ta{u}).
\end{align*}
Hence we are now in case (D) and the parameter transformation $\paramt_{p' \to p}$ for $u_{p'}$ is defined to be
\begin{align*}
    \paramt_{p' \to p}(s) = \begin{cases}
        \tilde{\paramt}_{p' \to p}(s) & \text{ on } [0, S^\ast_{u_{p'}})\\
        \ta{u_{p'}} & \text{ on } [S^\ast_{u_{p'}}, +\infty)
    \end{cases},
\end{align*}
where $\tilde{\paramt}_{p' \to p}$ is as in Lemma~\ref{lem:param_trans_domain_possibly_bounded}.
A simple calculation yields
\begin{align*}
    \paramt_{p' \to p}([0, S^\ast_{u_{p'}})) = \tilde{\paramt}_{p' \to p}([0, S^\ast_{u_{p'}})) = [0, \ta{u_{p'}}) = [0, S^\ast_u);
\end{align*}
therefore we have for any $s \in [0, S^\ast_{u_{p'}})$
\begin{align*}
    u_{p'}\circ \paramt_{p' \to p}(s) = u \circ \tilde{\paramt}_{p \to p'} \circ \tilde{\paramt}_{p' \to p}(s) = u(s).
\end{align*}
Moreover we get for any $s \in [S^\ast_{u_{p'}}, +\infty) = [\ta{u}, +\infty)$
\begin{align*}
    u_{p'}\circ \paramt_{p' \to p}(s) = u \circ \paramt_{p \to p'}(S^\ast_u) = u(\ta{u}) \underset{\scriptsize \text{Cor.}~\ref{cor:lower_bound_of_local_slope}}{=} u(s).
\end{align*}
This completes our check of (iii).
\end{proof}

Next, let us give the proof of Theorem~\ref{thm:p_cms_to_p_dash_cms_lambda_st_0}, which has many common parts with the proof of Theorem~\ref{thm:p_cms_to_p_dash_cms_lambda_gt_0}.
We also note that, in Theorem~\ref{thm:p_cms_to_p_dash_cms_lambda_st_0}, the constant $\ta{u}$ is \eqref{eq:t_ast_u} and $\sa_{p \to p'}$ is as in Lemma~\ref{lem:param_trans_domain_possibly_bounded}.
\begin{proof}[proof of Theorem~\ref{thm:p_cms_to_p_dash_cms_lambda_st_0}]
    We denote $\sap{p}{p'}$ by $S^\ast_u$ for simplicity.

    First, let us check (i) and (ii).
    Let $\paramt_{p \to p'} \coloneqq \tilde{\paramt}_{p\to p'} \colon [0, S^\ast_u) \to [0, \ta{u})$ as in Lemma~\ref{lem:param_trans_domain_possibly_bounded}.
    By the similar argument as the proof of Theorem~\ref{thm:p_cms_to_p_dash_cms_lambda_gt_0}, in the following cases
    \begin{enumerate}[label=(\Alph*)]
        \item $\sa_u = +\infty$ and $\ta{u} = +\infty$;
        \item $\sa_u = +\infty$ and $\ta{u} < +\infty$;
        \item $\sa_u < +\infty$ and $\ta{u} < +\infty$;
    \end{enumerate}
    we can extend $\paramt_{p \to p'}$ over $[0, +\infty)$ and check (i) and (ii).
    Furthermore, by the same way as the proof of Theorem~\ref{thm:p_cms_to_p_dash_cms_lambda_gt_0},
    in case (D) $\sa_u < +\infty$ and $\ta{u} = +\infty$,
    we can extend $\paramt_{p \to p'}$ over $[0, +\infty)$ and show that (ii) holds.
    Thus let us check (i) in case (D).

    To define $u_{p'}$ on $[S^\ast_u, +\infty)$ and check (i), we show that 
    \begin{align}
        \begin{split}
            \label{cond:existence_point_at_infinity_lambda_st_0}
            |u'| \in L^1([0, +\infty)) \text{ and } \localslope (u(\ta{u})) = 0.
        \end{split}
    \end{align}
    Note that the existence of $u(\ta{u})$ is assumed in Theorem~\ref{thm:p_cms_to_p_dash_cms_lambda_st_0},
    combining this and the lower semicontinuity of $f$, we have
    \begin{align*}
        \int_0^{+\infty} |u'|^p(r) dr \underset{\eqref{eq:energy_identity}}{=} \limsup_{t \to +\infty} \left(f\circ u(0) - f\circ u (t) \right)
        \leq f\circ u(0) - f(u(\ta{u})) < +\infty.
    \end{align*}
    Since $\alpha < 1$ and $p > 1$, one sees that
    \begin{align*}
        \int_{0}^{\infty} |u'|(r)dr
        \leq \int_{\{|u'| \leq 1\}} |u'|^\alpha(r) dr + \int_{\{|u'| > 1\}} |u'|^p(r)dr < +\infty;
    \end{align*}
    thus we get $\liminf_{t\to+\infty}|u'|(t) = 0$, and then $\localslope(u(\ta{u})) = 0$.
    This completes the verification of~\eqref{cond:existence_point_at_infinity_lambda_st_0}.
    It follows from \eqref{cond:existence_point_at_infinity_lambda_st_0} that (i) holds by the same discussion as in the proof of Theorem~\ref{thm:p_cms_to_p_dash_cms_lambda_gt_0}.

     The statement (iii) can be checked in the same way as in the proof of Theorem~\ref{thm:p_cms_to_p_dash_cms_lambda_gt_0}.
\end{proof}

The existence assumption of $\lim_{t \:\uparrow\: +\infty} u(t)$ in Theorem~\ref{thm:p_cms_to_p_dash_cms_lambda_st_0} (c) cannot be dropped because of the following example:
\begin{ex}
    \label{ex:assumption_cannot_be_dropped}
    Let $(\ms{S}, d)$ be a Euclidian space $(\mb{R}, d)$ and $f(x) \coloneqq -\frac{1}{2}x^2$.
    This function $f$ is $(2, -2)$-convex, and
    the curve $u \colon [0, +\infty) \to \mb{R}$ defined by $u(t) \coloneqq e^t$ belongs to $\pcms{2}{+\infty}$ with $u_0 = 1$.

    Let $p' \in (1, 2)$.
    Applying Lemma~\ref{lem:param_trans_domain_possibly_bounded}, we can easily check that $u_{p'}(s) = (1 + \alpha s)^{1/\alpha}$ and its domain is $[0, -\frac{1}{\alpha})$, where $\alpha = 1 - \frac{q'}{p'} < 0$ and $q'$ is the conjugate exponent of $p'$.
    Therefore $u_{p'}$ blows up in finite time and cannot be extended to infinite time.
\end{ex}
\noindent
It is also worth mentioning that this example cannot be obtained by minimizing movements scheme, discussed in~\cite[Chapter~2 and 3]{ambrosioGradientFlowsMetric2008}, because $f$ is not $p$-coercive defined in the sense of~\cite[(2.1.2b)]{ambrosioGradientFlowsMetric2008} for $p \in (1, 2)$.

Finally, let us prove the remaining main result -Theorem~\ref{thm:regularizing_effects}-.
Note that this theorem relaxes some assumptions imposed in previous results, for example, the assumption that a $p$-curve of maximal slope for $(p, \lambda)$-convex functions with the same $p$ and some $\lambda \in \mb{R}$ is given by discrete approximations called generalized minimizing movements in~\cite[Theorem~2.4.15]{ambrosioGradientFlowsMetric2008}.

\begin{proof}[proof of Theorem~\ref{thm:regularizing_effects}]
    Let $\tilde{u}$ be the arc-length parametrization of $u$ and $\sa_u, \ta{u}$ be as in Lemma~\ref{lem:arc_length_param}.
    
    First, let us check (i).
    If $\ta{u} < a$, then the locally integrability of $|u'|$ implies that $S^\ast_u < +\infty$.
    Thus we can extend the result of Lemma~\ref{lem:convexity_of_f_circ_u_tilde} on any interval $I \subset [0, S^\ast_u]$ with $|I| \leq 1$, which induces that $\localslope \circ \tilde{u}$ is locally bounded from above on $[0, S^\ast_u]$.
    Therefore we can assume that $\ta{u} = a$.
    For any $[T_0, T_1] \subset (0, a)$, the local upper bound of $\localslope$ yields the existence of $M > 0$ such that for any $t_0, t_1 \in [T_0, T_1]$ with $t_0 \leq t_1$
    \begin{align*}
        d(u(t_0), u(t_1))
        \leq \int_{t_0}^{t_1}|u'|(r) dr 
        \underset{\eqref{eq:energy_identity}}{\leq}
        M \cdot (t_1 - t_0),
    \end{align*}
    and
    \begin{align*}
        |f(u(t_1)) - f(u(t_0))| 
        \leq \int_{t_0}^{t_1} |(f\circ u)'(r)|dr 
        \underset{\eqref{eq:energy_identity}}{\leq}
        M \cdot (t_1 - t_0).
    \end{align*}
    These show the local Lipschitz continuity of $u$ and $f \circ u$.
    Let us take $A \coloneqq \{ s \in [0, S^\ast_u) \mid (f \circ \tu)'(s) = - \localslope \circ \tu (s) \}$ that has full $\ms{L}$-measure in $[0, \sa_u)$.
    Since $f \circ \tu$ is $(2, -\lambda^-)$-convex on every $I \subset [0, \sa_u)$ with $|I| \leq 1$, we have for any $s, t \in A$ with $0 \leq s - t < 1$
    \begin{align}
        \label{eq:slope_noninc_lambda_convex}
        \localslope \circ \tu(s) \leq \localslope \circ \tu(t) + 2\lambda^-|t - s|.
    \end{align}
    Fixing any $s \in (0, \sa_u)$ and then taking $s_0 \in A$ with $0 < s - s_0 < 1$, we get
    \begin{align*}
        \localslope \circ \tu(s)
        \underset{\text{l.s.c.}}{\leq} \liminf_{A \ni s' \uparrow s}\localslope \circ \tu(s')
        \underset{\eqref{eq:slope_noninc_lambda_convex}}{\leq} \localslope \circ \tu(s_0) + 2\lambda^-|s - s_0| < + \infty.
    \end{align*}
    These complete our discussion of (i).
    
    Next, let us check (ii).
    For any $s \in [0, \sa_u)$ and $s' \in A$ with $s < s'$ letting $h \coloneqq s' - s$ we have
    \begin{align*}
        &h \localslope \circ \tu(s')
        \underset{\eqref{eq:slope_noninc_lambda_convex}}{\leq} \int_{s}^{s'} \{ \localslope \circ \tu(r) + 2 \lambda^-|s' -r|\}dr\\
        &\underset{\scriptsize \text{Prop.}~\ref{prop:property_of_arc_length_param}}{=} f \circ \tu(s) - f \circ \tu(s') + \lambda^-h^2
        \underset{\scriptsize \text{Prop.}~\ref{prop:local_slope_is_a_strong_upper_gradient}}{\leq} \localslope \circ \tu(s)h + \lambda^-(h^{p_0} + h^2);
    \end{align*}
    hence dividing both sides by $h > 0$ and then letting $A \ni s'\:\downarrow\: s$ we get
    \begin{align*}
        \limsup_{A \ni s' \downarrow s} \localslope \circ \tu (s') \leq \localslope \circ \tu(s) \quad \text{for every } s \in [0, \sa_u);
    \end{align*}
    therefore the lower semicontinuity of $\localslope \circ \tu$ implies the almost right continuity of $\localslope \circ \tu$.
    Writing $\params^{-1}(A)$ again as $A$, since $\paramt$ is locally Lipschitz on $(0, \sa_u)$, we get the almost right continuity of $\localslope \circ u$ on $[0, a)$ w.r.t. $A$.
    This yields that
    \begin{align}
        \label{eq:right_derivative_1}
        (f\circ u)'_+(t) \underset{\eqref{eq:energy_identity}}{=} \lim_{h\:\downarrow\:0}\frac{1}{h} \int_{t}^{t+h} -\localslope^q \circ u(r) dr
        =  -\localslope^q \circ u(t) 
    \end{align}
    for every $t \in (0, a)$; furthermore this holds at $t=0$ if $u_0 \in D(\localslope)$.
    Similarly, we obtain
    \begin{align*}
        \limsup_{h\:\downarrow\:0} \frac{d(u(t+h), u(t))}{h}
        \underset{\eqref{eq:energy_identity}}{\leq} \limsup_{h\:\downarrow\:0} \frac{1}{h} \int_{t}^{t+h} \localslope^{q/p}\circ u(r)dr
        = \localslope^{q/p}\circ u(t)
    \end{align*}
    for every $t \in (0, a)$ and for $t=0$ if $u_0 \in D(\localslope)$.
    The global formula~\eqref{eq:global_formula} of $\localslope$ induces that for any $t \in [0, a)$ and $h > 0$
    \begin{align*}
        \frac{d(u(t+h), u(t))}{h}\localslope\circ u(t)
        \geq \frac{f(u(t)) - f(u(t+h))}{h} - \lambda^-\frac{d^{p_0}(u(t+h), u(t))}{h};
    \end{align*}
    hence \eqref{eq:right_derivative_1} yields
    \begin{align*}
        \liminf_{h\:\downarrow\:0}\frac{d(u(t+h), u(t))}{h}
        \geq \localslope^{q/p} \circ u(t)
    \end{align*}
    for every $t \in (0, a)$ and for $t = 0$ if $u_0 \in D(\localslope)$.
    This ends the check of~\eqref{eq:regularizing_right_energy_identity} and \eqref{eq:regularizing_right_approximate_cont}.\\
    If $\lambda \geq 0$, then we can extend the convexity of $f\circ \tu$ over $[0, \sa_u)$;
    therefore there exists $A \subset [0, \sa_u)$ with full $\ms{L}$-measure in $[0, \sa_u)$ such that $\localslope \circ \tu$ is non-increasing on $A$.
    For any $s \in (0, \sa_u)$ and $t_0 \in A$ with $t_0 < s$
    \begin{align*}
        \localslope \circ \tu(s)
        \underset{\text{l.s.c.}}{\leq} \liminf_{A \ni s' \:\uparrow\:s} \localslope\circ\tu(s') 
        \leq \localslope\circ\tu(t_0);
    \end{align*}
    on the other hand, for any $s \in [0, \sa_u)$ and $t_0 \in A$ with $s < t_0$, letting $h \coloneqq t_0 - s$, the global formula of $\localslope$ gives
    \begin{align*}
        \localslope\circ\tu(t_0)
        \leq \frac{1}{h}\int_{s}^{t_0} \localslope \circ \tu(r)dr
        \underset{\scriptsize \text{Prop.}~\ref{prop:property_of_arc_length_param}}{\leq}
        \frac{f(\tu(s)) - f(\tu(t_0))}{h}
        \leq \localslope\circ\tu(s).
    \end{align*}
    Therefore $\localslope \circ \tu$ is non-increasing on $[0, \sa_u)$, leading to that $\localslope \circ u$ is non-increasing on $[0, a)$.
    Because $\localslope \circ u$ is lower semicontinuous and non-increasing, it is right continuous.
    This completes the proof of (ii).

    Note that (iv) is a direct consequence of Proposition~\ref{prop:minimizer_lambda_gt_0}, thus let us finally check (iii).
    Since $\localslopeu$ is non-increasing, we have for any $t, t_0 \in [0, a)$ with $t_0 < t$
    \begin{align*}
        (t - t_0) \localslope^q \circ u(t)
        \leq \int_{t_0}^t \localslope^q \circ u(r)dr
        \underset{\eqref{eq:energy_identity}}{=} f(u(t_0)) - f(u(t))
    \end{align*}
    and
    \begin{align*}
        &\frac{(t - t_0)}{q} \localslope^q \circ u(t)
        \leq \frac{1}{q} \int_{t_0}^t \localslope^q \circ u(r)dr\\
        &\underset{\eqref{eq:energy_identity}}{=} f(u(t_0)) - f(u(t)) - \frac{1}{p} \int_{t_0}^t |u'|^p(r)dr\\
        &\leq f(u(t_0)) - f(u(t)) - \frac{1}{p(t-t_0)^{p-1}}d^p(u(t_0), u(t)).
    \end{align*}
    This completes our check of (iii).
\end{proof}

\section{Existence of canonical bijective maps and uniqueness of \texorpdfstring{$p$}{p}-curves of maximal slope}
\label{sec:bijective_map_and_uniqueness}
Let $(\ms{S},d)$ be a complete metric space, $f \colon \ms{S} \to (-\infty, +\infty]$ be a proper and lower semicontinuous, and $u_0 \in D(f)$.
This section aims to prove the existence of canonical bijective maps from $p$-curves of maximal slope to $p'$-ones, and the uniqueness results -Corollaries~\ref{cor:uniqueness_of_p_cms_lambda_gt_0_intro} and \ref{cor:uniqueness_of_p_cms_lambda_st_0_intro}-.

For any $u_0 \in D(f)$, let
\begin{align*}
    \text{CMS}_p^\ast(u_0) \coloneqq \bigcup_{a \in (0, +\infty]} \{ u \in \pcms{p}{a} \mid \ta{u} = a\},
\end{align*}
and
\begin{align*}
    \text{CMS}_{p, \infty}(u_0) \coloneqq \{ u \in \pcms{p}{+\infty} \mid \exists \lim_{t\to+\infty}u(t) \}.
\end{align*}
Recalling Proposition~\ref{prop:positivity_of_localslope}, for any $a \in (0, +\infty]$ we can consider
\begin{align*}
 \pcms{p}{a} \setminus \text{CMS}_p^\ast(u_0) \subset \text{CMS}_{p, \infty}(u_0),
\end{align*}
with appropriate extensions of the curves' domain if necessary.

Let us construct canonical bijective maps that make clear the difference between $p$ and $p'$-curves of maximal slope.
\begin{cor}[Canonical bijective maps between $p$-curves of maximal slope and $p'$-ones]
    \label{cor:existence_of_bijective_map}
    Let one of the following hold:
    \begin{enumerate}[label=(\alph*)]
        \item $\lambda=0$ and $p_0 \in (1, +\infty)$;
        \item $\lambda < 0$ and $p_0 \in [2, +\infty)$.
    \end{enumerate}
    Suppose that $f$ is $(p_0, \lambda)$-convex.
    For any $p, p' \in (1, + \infty)$ if $\lambda = 0$ or any $p, p' \in (1, p_0]$ if $\lambda < 0$, we define the map $\Lambda_1 \colon \text{CMS}_p^\ast(u_0) \to \text{CMS}_{p'}^\ast(u_0)$ and the map $\Lambda_2 \colon \text{CMS}_{p, \infty}(u_0) \to \text{CMS}_{p', \infty}(u_0)$ by
    \begin{align*}
        \Lambda_1(u) &\coloneqq u \circ \tilde{\paramt}_{p \to p'} \quad \text{for any } u \in \text{CMS}_p^\ast(u_0), \\
        \Lambda_2(u) &\coloneqq u \circ \paramt_{p \to p'} \quad \text{for any } u \in \text{CMS}_{p, \infty}(u_0),
    \end{align*}
     where $\tilde{\paramt}_{p \to p'}$ is as in Lemma~\ref{lem:param_trans_domain_possibly_bounded}, and $\paramt_{p\to p'}$ is as in Theorems~\ref{thm:p_cms_to_p_dash_cms_lambda_gt_0}, \ref{thm:p_cms_to_p_dash_cms_lambda_st_0} if $\lambda = 0$, if $\lambda < 0$, respectively.
     Then they are bijective.
\end{cor}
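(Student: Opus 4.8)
The plan is to prove bijectivity of each map by exhibiting an explicit two-sided inverse, namely the transformation with the exponents $p$ and $p'$ interchanged, and then to verify that both compositions reduce to the identity by invoking the dual relation (iv), the inverse relation (v), and conclusion (iii) already recorded. No new estimate is needed: the entire content of the corollary is encoded in those relations, so the work consists of checking that the two maps land in the asserted target classes and then reading off the identities.

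First I would treat $\Lambda_1$. For $u \in \text{CMS}_p^\ast(u_0)$ we have by definition $u \in \pcms{p}{a}$ with $\ta{u} = a$, so Lemma~\ref{lem:param_trans_domain_possibly_bounded} applies; part (iii) gives $\Lambda_1(u) = u \circ \tilde{\paramt}_{p\to p'} \in \pcms{p'}{\sap{p}{p'}}$, and the dual relation (iv) gives $\ta{u_{p'}} = \sap{p}{p'}$, which is precisely the right endpoint of the domain, so $\Lambda_1(u) \in \text{CMS}_{p'}^\ast(u_0)$ and $\Lambda_1$ is well defined. I would then define $\Lambda_1^{-1}$ as the analogous map with $p$ and $p'$ swapped, $v \mapsto v \circ \tilde{\paramt}_{p'\to p}$. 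The inverse relation (v), applied to the transformation attached to $u_{p'}$, states that $\tilde{\paramt}_{p'\to p}$ (for $u_{p'}$) equals the forward map $\tilde{\params}_{p\to p'}$ (for $u$); since $\tilde{\paramt}_{p\to p'}$ and $\tilde{\params}_{p\to p'}$ are mutually inverse, we obtain
\[
\Lambda_1^{-1}(\Lambda_1(u)) = u \circ \tilde{\paramt}_{p\to p'} \circ \tilde{\paramt}_{p'\to p} = u \circ \tilde{\paramt}_{p\to p'} \circ \tilde{\params}_{p\to p'} = u.
\]
Running the identical argument with the roles of $p$ and $p'$ interchanged yields $\Lambda_1 \circ \Lambda_1^{-1} = \mathrm{id}$, so $\Lambda_1$ is a bijection.

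Next I would treat $\Lambda_2$, which follows the same scheme with Theorems~\ref{thm:p_cms_to_p_dash_cms_lambda_gt_0} and \ref{thm:p_cms_to_p_dash_cms_lambda_st_0} in place of the lemma. For $u \in \text{CMS}_{p, \infty}(u_0)$ I first check that the relevant theorem applies: when $\lambda = 0$ it holds for every $p'$, while when $\lambda < 0$ the existence of $\lim_{t\to+\infty} u(t)$ guarantees that one of the conditions (a)--(c) is met (condition (c) when $\ta{u} = +\infty$; conditions (a) or (b) when $\ta{u} < +\infty$, where the curve is eventually stationary by Corollary~\ref{cor:lower_bound_of_local_slope}). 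Conclusion (i) then gives $\Lambda_2(u) = u \circ \paramt_{p\to p'} \in \pcms{p'}{+\infty}$. To confirm $\Lambda_2(u) \in \text{CMS}_{p', \infty}(u_0)$ I would inspect the construction in the four cases (A)--(D) of the theorem proofs: $\paramt_{p\to p'}$ is either eventually constant equal to $\ta{u}$ (when $S^\ast_u < +\infty$) or tends to $\ta{u}$ as $s \to +\infty$ (when $S^\ast_u = +\infty$), so in every case $u_{p'}(s) \to u(\ta{u})$, a limit that exists. The inverse map is again the swapped transformation $v \mapsto v \circ \paramt_{p'\to p}$, and conclusion (iii) of the theorems gives directly $u_{p'} \circ \paramt_{p'\to p} = u$, i.e. $\Lambda_2^{-1}(\Lambda_2(u)) = u$; the reverse composition is the same identity with $p, p'$ interchanged. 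Hence $\Lambda_2$ is a bijection.

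The only genuinely non-formal step is the well-definedness of $\Lambda_2$ on the target side, namely confirming that $\Lambda_2(u)$ lands in $\text{CMS}_{p', \infty}(u_0)$ rather than merely in $\pcms{p'}{+\infty}$; this is where one must read off from cases (A)--(D) that the transformed curve either becomes stationary or inherits the limit of $u$, so that $\lim_{t\to+\infty} u_{p'}(t)$ exists. Everything else is a bookkeeping consequence of (iv), (v), and (iii), with bijectivity following at once from the explicit two-sided inverses.
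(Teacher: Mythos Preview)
Your proposal is correct and follows exactly the intended route. The paper in fact gives no proof at all for this corollary, treating it as an immediate consequence of Lemma~\ref{lem:param_trans_domain_possibly_bounded} (iii)--(v) and of conclusions (i) and (iii) in Theorems~\ref{thm:p_cms_to_p_dash_cms_lambda_gt_0}--\ref{thm:p_cms_to_p_dash_cms_lambda_st_0}; your write-up simply makes that implicit reasoning explicit, including the one point that does require a moment's thought, namely that $\Lambda_2(u)$ again has a limit at infinity (your case analysis (A)--(D) handles this correctly).
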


Next, focusing on $p$-curves of maximal slope defined over the interval $[0, \infty)$, let's prove Corollary~\ref{cor:uniqueness_of_p_cms_lambda_gt_0_intro}.
\begin{cor}[Uniqueness of $p$-curves of maximal slope; the case $\lambda = 0$]
    \label{cor:uniqueness_p_cms_lambda_gt_0}
    Suppose that $f$ is $(p_0, \lambda)$-convex for some $(p_0, \lambda)$ satisfying $\lambda = 0$ and $p_0 \in (1, +\infty)$.
    For any $p, p' \in (1, + \infty)$  we define the map $\Lambda \colon \pcms{p}{+\infty} \to \pcms{p'}{+\infty}$ by
    \begin{align*}
        \Lambda(u) \coloneqq u \circ \paramt_{p \to p'} \quad \text{for any } u \in \pcms{p}{+\infty},
    \end{align*}
    where $\paramt_{p \to p'}$ is as in Theorem~\ref{thm:p_cms_to_p_dash_cms_lambda_gt_0}.
    Then this is bijective.

    In particular, if $\# \pcms{p}{+\infty} = 1$ for an exponent $p \in (1, +\infty)$, then $\# \pcms{p'}{+\infty} = 1$ for all $p' \in (1, +\infty)$.
\end{cor}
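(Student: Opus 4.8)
The plan is to establish bijectivity of $\Lambda$ by exhibiting an explicit two-sided inverse, namely the reverse transformation $\Lambda' \colon \pcms{p'}{+\infty} \to \pcms{p}{+\infty}$ given by $\Lambda'(v) \coloneqq v \circ \paramt_{p' \to p}$, where $\paramt_{p' \to p}$ is the map furnished by Theorem~\ref{thm:p_cms_to_p_dash_cms_lambda_gt_0} applied with the two exponents interchanged. Since we are in the case $\lambda = 0$, that theorem imposes no restriction on the exponents, so it applies equally to the ordered pairs $(p, p')$ and $(p', p)$; in particular both $\Lambda$ and $\Lambda'$ are defined on the stated domains. Their well-definedness as genuine (single-valued) functions, independent of any auxiliary choices in the construction, is exactly what statement (ii) of Theorem~\ref{thm:p_cms_to_p_dash_cms_lambda_gt_0} provides: the transformation depends only on the underlying curve.

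First I would record that the two maps land where claimed: for $u \in \pcms{p}{+\infty}$, statement (i) gives $u_{p'} \coloneqq u \circ \paramt_{p \to p'} \in \pcms{p'}{+\infty}$, so $\Lambda$ maps into $\pcms{p'}{+\infty}$, and symmetrically $\Lambda'$ maps into $\pcms{p}{+\infty}$. Next I would verify $\Lambda' \circ \Lambda = \mathrm{id}$. Fix $u \in \pcms{p}{+\infty}$ and set $u_{p'} = \Lambda(u)$. Statement (iii) asserts precisely that the transformation $\paramt_{p' \to p}$ attached to the curve $u_{p'}$ realizes $u = u_{p'} \circ \paramt_{p' \to p}$; as this is exactly the map used to form $\Lambda'(u_{p'})$, we get $\Lambda'(\Lambda(u)) = u_{p'} \circ \paramt_{p' \to p} = u$. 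The relation $\Lambda \circ \Lambda' = \mathrm{id}$ follows by running the identical argument with $p$ and $p'$ swapped. Hence $\Lambda' = \Lambda^{-1}$ and $\Lambda$ is bijective.

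The ``in particular'' clause is then immediate, since a bijection preserves cardinality: if $\# \pcms{p}{+\infty} = 1$ for some $p$, then for every $p' \in (1, +\infty)$ the bijection $\Lambda$ between $\pcms{p}{+\infty}$ and $\pcms{p'}{+\infty}$ forces $\# \pcms{p'}{+\infty} = 1$. The one point requiring care, and the only real obstacle (a mild bookkeeping one), is to confirm that the map $\paramt_{p' \to p}$ appearing in the definition of $\Lambda'(u_{p'})$ coincides with the one produced by statement (iii) for that specific curve $u_{p'}$; this identification is precisely guaranteed by statement (ii), so no genuine difficulty remains beyond tracking the exponent-swap symmetry.
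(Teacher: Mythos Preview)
Your proposal is correct and matches the paper's approach: the paper states this corollary without proof, treating it as an immediate consequence of Theorem~\ref{thm:p_cms_to_p_dash_cms_lambda_gt_0}, and your argument is precisely the routine verification that (i)--(iii) of that theorem yield a two-sided inverse. The only minor remark is that your closing caveat about identifying the two instances of $\paramt_{p'\to p}$ is slightly overstated: statement~(iii) already refers to ``the map $\paramt_{p'\to p}$ for $u_{p'}$'', i.e., the very map produced by the theorem for that curve, so the identification is definitional rather than something requiring (ii).
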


By the above corollary, we have done the proof of Corollary~\ref{cor:uniqueness_of_p_cms_lambda_gt_0_intro}.

Finally, let us prove Corollary~\ref{cor:uniqueness_of_p_cms_lambda_st_0_intro}.
\begin{cor}[Uniqueness of $p$-curves of maximal slope; the case $\lambda < 0$]
    \label{cor:uniqueness_p_cms_lambda_st_0}
    Suppose that $f$ is $(p_0, \lambda)$-convex for some $(p_0, \lambda)$ satisfying $\lambda < 0$ and $p_0 \in [2, +\infty)$.
    If there exists an exponent $p \in (1, p_0]$ such that
    \begin{align}
        \label{ass:uniqueness_p_cms_for_all_a}
        \#\pcms{p}{a}=1 \quad \text{for any } a \in (0, +\infty],
    \end{align}
    then for any $p' \in (1, p_0]$ we have
    \begin{align*}
        &T_{p'} \coloneqq \sup \{a \in (0, +\infty] \mid \# \pcms{p'}{a} \geq 1\}\\ 
        & = 
        \begin{cases}
            \sa_{p\to p'} & \text{(i) } \sa_{p \to p'} < +\infty, \ta{u} = +\infty \text{ and } \nexists \lim_{t\:\uparrow\:\ta{u}}u(t) \\
            +\infty & \text{(ii) otherwise}
         \end{cases},
    \end{align*}
    and the map $\Lambda \colon \pcms{p}{+\infty} \to \pcms{p'}{T_{p'}}$ defined to be
    \begin{align*}
        \Lambda(u) \coloneqq \begin{cases}
            u \circ \tilde{\paramt}_{p \to p'} & \text{if (i)}\\
            u \circ \paramt_{p \to p'} & \text{if (ii)}
        \end{cases}
    \end{align*}
    is bijective, where $u$ is the unique element in $\pcms{p}{+\infty}$, and $\tilde{\paramt}_{p\to p'}$, $\paramt_{p\to p'}$ is as in Lemma~\ref{lem:param_trans_domain_possibly_bounded}, Theorem~\ref{thm:p_cms_to_p_dash_cms_lambda_st_0}, respectively.
\end{cor}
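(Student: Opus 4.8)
The plan is to convert the uniqueness hypothesis for the exponent $p$ into uniqueness for $p'$ via the parameter transformations, treating the dichotomy (i)/(ii) as exactly the split of Theorem~\ref{thm:p_cms_to_p_dash_cms_lambda_st_0} into its excluded case and its admissible cases (a)--(c). First I would fix the unique $u \in \pcms{p}{+\infty}$ provided by \eqref{ass:uniqueness_p_cms_for_all_a} with $a=+\infty$, and record $\ta{u}$, $\sa_u$, together with $\alpha$ and $\sa_{p\to p'}$ as in Lemma~\ref{lem:param_trans_domain_possibly_bounded}. The existence half is then immediate: in case (ii) we are in one of the situations (a)--(c) of Theorem~\ref{thm:p_cms_to_p_dash_cms_lambda_st_0}, which yields $\Lambda(u)=u\circ\paramt_{p\to p'}\in\pcms{p'}{+\infty}$, so $T_{p'}=+\infty$; in case (i) Lemma~\ref{lem:param_trans_domain_possibly_bounded} yields $u\circ\tilde{\paramt}_{p\to p'}\in\pcms{p'}{\sa_{p\to p'}}$, so $T_{p'}\geq \sa_{p\to p'}$.

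The core step is the reverse inequality $T_{p'}\leq \sa_{p\to p'}$ in case (i), together with uniqueness of the target. For this I would take an arbitrary $v\in\pcms{p'}{a}$ and transform it backward: Lemma~\ref{lem:param_trans_domain_possibly_bounded}, with the roles of $p$ and $p'$ interchanged, produces $w:=v\circ\tilde{\paramt}_{p'\to p}\in\pcms{p}{\,\sa_{p'\to p,v}}$. By the uniqueness assumption \eqref{ass:uniqueness_p_cms_for_all_a}, $w$ must coincide with the restriction of $u$ to $[0,b)$, where $b:=\sa_{p'\to p,v}$; hence $|w'|=|u'|$ on $[0,b)$, so $\tilde{\params}_{p\to p',w}(t)=\int_0^t|u'|^\alpha(r)\,dr=\tilde{\params}_{p\to p',u}(t)$ for $t<b$ and the two forward transformations agree on their common range. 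The inverse relation of Lemma~\ref{lem:param_trans_domain_possibly_bounded} then identifies $v$ with $u\circ\tilde{\paramt}_{p\to p'}$ on $[0,\ta{v})$ and gives $\ta{v}=\int_0^b|u'|^\alpha(r)\,dr\leq\int_0^{\ta{u}}|u'|^\alpha(r)\,dr=\sa_{p\to p'}$.

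It then remains to control the possibly stationary tail $[\ta{v},a)$. If $\ta{v}<a$, Corollary~\ref{cor:lower_bound_of_local_slope} forces $v$ to be stationary there with $\localslope(v(\ta{v}))=0$; but tracing the reparametrization shows $v(\ta{v})=\lim_{s\:\uparrow\:b}u(s)$, which either equals $u(b)$ with $\localslope(u(b))>0$ when $b<+\infty=\ta{u}$ (a contradiction), or, when $b=+\infty$, equals the nonexistent limit $\lim_{s\to+\infty}u(s)$, contradicting the continuity of $v$ at $\ta{v}$. Thus $\ta{v}=a\leq\sa_{p\to p'}$, which yields $T_{p'}=\sa_{p\to p'}$, and the same backward--then--forward computation (now with $|w'|=|u'|$ on the full $[0,b)$) identifies $v$ with $\Lambda(u)$ on all of $[0,T_{p'})$, so $\#\pcms{p'}{T_{p'}}=1$; in case (ii) the identical backward--uniqueness--forward argument shows every $v\in\pcms{p'}{+\infty}$ equals $\Lambda(u)$. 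Since $\#\pcms{p}{+\infty}=1$, the map $\Lambda$ sends the single element to the single element and is therefore bijective. The main obstacle I anticipate is the bookkeeping in this tail analysis: matching $\ta{v}$, the truncation $b$, and the endpoint behaviour of $u$ so that the nonexistence of $\lim_{t\:\uparrow\:\ta{u}}u(t)$ is turned into the sharp bound $a\leq\sa_{p\to p'}$ with no gap between the sub-cases $b<+\infty$ and $b=+\infty$.
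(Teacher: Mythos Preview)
Your proposal is correct and follows essentially the same strategy as the paper: transform an arbitrary $p'$-curve $v$ backward via Lemma~\ref{lem:param_trans_domain_possibly_bounded}, invoke the uniqueness hypothesis~\eqref{ass:uniqueness_p_cms_for_all_a} to identify the result with a restriction of $u$, and then go forward again to pin down $v$ and the value of $T_{p'}$. The paper organizes the steps slightly differently---in case~(i) it quotes Corollary~\ref{cor:existence_of_bijective_map} to obtain the truncation $b$ with $\Lambda_1(u|_{[0,b)})=v$, and in case~(ii) it verifies that $v_{p'}$ satisfies hypothesis~(c) of Theorem~\ref{thm:p_cms_to_p_dash_cms_lambda_st_0} so that the full inverse $\paramt_{p'\to p}$ is available---but these are equivalent to your direct integral computation $\ta{v}=\int_0^b|u'|^\alpha\leq \sa_{p\to p'}$ together with your tail analysis.

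One point worth noting: your tail argument in case~(i), splitting into $b<+\infty$ (contradiction via $\localslope(u(b))>0$) and $b=+\infty$ (contradiction via the nonexistence of $\lim_{t\to+\infty}u(t)$), is in fact more explicit than the paper's, which only states the second contradiction and leaves the first implicit. Your anticipated ``bookkeeping obstacle'' is therefore exactly the place where you are already more careful than the source; the same two-sub-case split is what makes the ``identical'' argument in case~(ii) go through as well, so you should expect to spell it out there too rather than treat it as a literal repetition.
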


\begin{proof}
    Let $u$ be the unique element of $\pcms{p}{+\infty}$.

    First, let us consider the case (ii).
    By Theorem~\ref{thm:p_cms_to_p_dash_cms_lambda_st_0}, we have $T_{p'} = +\infty$.
    The injectivity of $\Lambda$ is clear; thus we check the subjectivity of it.
    Let $v_{p'} \in \pcms{p'}{+\infty}$.
    To use Theorem~\ref{thm:p_cms_to_p_dash_cms_lambda_st_0} let us check that the condition (c) in Theorem~\ref{thm:p_cms_to_p_dash_cms_lambda_st_0} holds.
    Suppose that $\sa_{p' \to p} < +\infty$ and $\ta{v_{p'}} = +\infty$ for $v_{p'}$.
    Letting $v \coloneqq v_{p'} \circ \tilde{\paramt}_{p' \to p}$ where $\tilde{\paramt}_{p'\to p}$ is as in Lemma~\ref{lem:param_trans_domain_possibly_bounded} for $v_{p'}$, the uniqueness assumption~\eqref{ass:uniqueness_p_cms_for_all_a} induces that $u = v$ on $[0, \sa_{p' \to p})$; hence we get
    \begin{align*}
        \lim_{t \:\uparrow\: \ta{v_{p'}}} v_{p'}(t)
        = \lim_{t \:\uparrow\: \ta{v_{p'}}} v \circ \tilde{\params}_{p' \to p}(t)
        = \lim_{s \:\uparrow\: \sa_{p' \to p}} u(s) = u(\sa_{p' \to p}).
    \end{align*}
    This allow us to use Theorem~\ref{thm:p_cms_to_p_dash_cms_lambda_st_0}, leading to the equality
    \begin{align*}
        v_{p'} = \Lambda(v_{p'} \circ \paramt_{p' \to p}).
    \end{align*}
    This completes our discussion in case (ii).

    Next, let us deal with the remaining case (i).
    By Lemma~\ref{lem:param_trans_domain_possibly_bounded}, we have $T_{p'} \geq \sa_{p \to p'}$.
    We check that $T_{p'} = \sa_{p \to p'}$.
    Let $v \in \pcms{p'}{a}$.
    If $\ta{v} < a$, then letting $v_p \coloneqq v \circ \tilde{\paramt}_{p' \to p}$ the uniqueness assumption~\eqref{ass:uniqueness_p_cms_for_all_a} gives
    \begin{align*}
        \lim_{s\:\uparrow\: \sa_{p' \to p}}u(s)
        = \lim_{s\:\uparrow\: \sa_{p' \to p}} v_p(s)
        = v(\ta{v}).
    \end{align*}
    This is a contradiction to $\nexists \lim_{t\:\uparrow\:\ta{u}}u(t)$; thus $\ta{v} = a$.
    Corollary~\ref{cor:existence_of_bijective_map} and the assumption~\eqref{ass:uniqueness_p_cms_for_all_a} show the existence of $b \in (0, +\infty]$ such that $\Lambda_1(u|_{[0, b)}) = v$, which leads to $a = \sa_{p\to p', u|_{[0, b)}} \leq \sa_{p\to p'}$.
    This completes our check of $T_{p'} = \sa_{p \to p'}$.
    Then, the bijectivity of $\Lambda$ is proved as follows.
    The injectivity of $\Lambda$ is clear.
    Let $v_{p'} \in \pcms{p'}{T_{p'}}$.
    Similarly as the above discussion, we have $\ta{v_{p'}} = T_{p'}$.
    If $\sa_{p' \to p}$ for $v_{p'}$ satisfies  $\sa_{p' \to p} < \ta{u}$, then letting $v \coloneqq v_{p'} \circ \tilde{\paramt}_{p' \to p}$  
    this gives
    \begin{align*}
        \sa_{p \to p'}
        = \ta{v_{p'}}
        = \tilde{\paramt}_{p' \to p, v_{p'}}(\sa_{p'\to p})
        = \tilde{\params}_{p \to p', v}(\sa_{p'\to p})
        \underset{\eqref{ass:uniqueness_p_cms_for_all_a}}{=} \tilde{\params}_{p \to p', u}(\sa_{p'\to p})
        < \sa_{p \to p'}.
    \end{align*}
    This is a contradiction; thus $\sa_{p' \to p} = \ta{u} = +\infty$. 
    This implies that $v$ is a member of $\pcms{p}{+\infty}$;
    therefore $v_{p'} = \Lambda(v)$.
\end{proof}

The condition $p' \in (1, p_0]$ is essential in Corollary~\ref{cor:uniqueness_p_cms_lambda_st_0}:
\begin{ex}
    \label{ex:p_cms_are_not_unique_for_p_gt_p_0}
    We consider a $2$-dimentional Euclidian space $(\mb{R}^2, d)$ as $(\ms{S}, d)$.
    Let $f(x) \coloneqq - \frac{1}{2}(x_1^2 + x_2^2)$.
    This function $f$ is $(2, -2)$-convex and Assumption~\eqref{ass:uniqueness_p_cms_for_all_a} holds for $p=2$ because of the following: 
    any $(2, \lambda)$-convex function with some $\lambda \in \mb{R}$ on a Hilbert space generates an EVI-curve starting from $u_0$ for each $u_0 \in D(f)$ (see Section~\ref{subsec:applications} for more details).

    Let $u_0 = (0, 0)$. This is a critical point of $f$ in the sense $\localslope (u_0) = 0$ so that the unique element $u \in \pcms{p'}{\sap{2}{p'}}$ satisfies $u(t) = u_0$ for any $t \in [0, \sap{2}{p'})$ for all $p' \in (1, 2]$.
    On the other hand for any $p' \in (2, +\infty)$ letting $\alpha \coloneqq 1 - \frac{q'}{p'} \in (0, 1)$ the curve $u(t) \coloneqq \left( \cos \theta \cdot (\alpha t)^{1/\alpha}, \sin \theta \cdot (\alpha t)^{1/\alpha} \right)$ belongs to $\pcms{p'}{+\infty}$ for any $\theta \in [0, 2\pi)$; therefore $p'$-curves of maximal slope are not unique.
\end{ex}

\subsection{Application: unique existence of \texorpdfstring{$p$}{p}-curves of maximal slope when EVI-flow exists}
\label{subsec:applications}

In this section, we aim to show the unique existence of $p$-curves of maximal slope under the existence assumption of the evolution variational inequality (EVI) gradient flow which is introduced in~\cite{ambrosioGradientFlowsMetric2008} and studied in~\cite{muratori2020108347}.
In the sequel, let $\lambda \in \mb{R}$, and we describe $(2, \lambda)$-convex as $\lambda$-convex for simplicity.

An $\evilam$-curve $u \colon [0, +\infty) \to \ms{S}$ is a continuous map satisfying $u(t) \in D(f)$ for every $t > 0$ and the \textit{evolution variational inequality}
\begin{align*}
    \frac{d}{dt}^+ d^2(u(t), V) + \frac{\lambda}{2}d^2(u(t), V) \leq f(V) - f(u(t)) \text{ for every } t > 0, V \in D(f),
\end{align*}
where $\frac{d}{dt}^+\phi(t)$ is the right upper Dini derivative $\limsup_{h\downarrow 0} \frac{\phi(t+h) - \phi(t)}{h}$ at $t \in (0, +\infty)$.
$\evilam$-curves induce useful properties; we quote a few of them from~\cite[Theorem~3.5]{muratori2020108347}:

\begin{thm}[\cite{muratori2020108347}]
    Let $u, u^1, u^2$ be $\evilam$-curves.
    Then we have
    \begin{itemize}
        \item (\textbf{$\lambda$-contraction}) for every $0 \leq s \leq t < +\infty$
        \begin{align*}
            d(u^1(t), u^2(t)) \leq e^{-\lambda(t-s)}d(u^1(s), u^2(s)).
        \end{align*}
        In particular, for each $v_0 \in \cldomain$ there is at most one $\evilam$-curve $u$ starting from $v_0$.
        \item (\textbf{Energy identity}) for every $t > 0$ the right limits
            \begin{align*}
                |u'_+|(t) \coloneqq \lim_{h \downarrow 0} \frac{d(u(t+h), u(t))}{h},
                (f\circ u)'_+(t) \coloneqq \lim_{h \downarrow 0} \frac{f\circ u(t + h) - f\circ u(t)}{h}
            \end{align*}
            exist, satisfy
            \begin{align*}
                (f \circ u)'_+(t) = - |u'_+|^2(t) = - \localslope^2\circ u(t),
            \end{align*}
            and are right continuous on $(0, +\infty)$.
    
            Moreover the map $t \mapsto f \circ u(t)$ is non-increasing continuous on $[0, +\infty)$.
    \end{itemize}
\end{thm}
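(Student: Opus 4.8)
The plan is to treat the two assertions separately: the $\lambda$-contraction is a self-contained consequence of testing the EVI symmetrically, whereas the energy identity is obtained by first recognizing an $\evilam$-curve as a $2$-curve of maximal slope and then invoking the machinery already developed in this paper.

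For the contraction I would fix two $\evilam$-curves $u^1,u^2$ and study the scalar function $\phi(t)\coloneqq d^2(u^1(t),u^2(t))$. Since $\evilam$-curves are locally Lipschitz on $(0,+\infty)$, the first step is the sum rule for the upper right Dini derivative,
\begin{align*}
    \frac{d}{dt}^+\phi(t) \leq \left.\frac{d}{ds}^+ d^2(u^1(s),u^2(t))\right|_{s=t} + \left.\frac{d}{ds}^+ d^2(u^1(t),u^2(s))\right|_{s=t},
\end{align*}
whose two right members are exactly the Dini terms appearing in the EVI for $u^1$ with test point $V=u^2(t)$ and in the EVI for $u^2$ with $V=u^1(t)$. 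Substituting both inequalities and adding, the energy differences $f(u^2(t))-f(u^1(t))$ and $f(u^1(t))-f(u^2(t))$ cancel, leaving a linear differential inequality of the form $\frac{d}{dt}^+\phi\leq-2\lambda\phi$, which integrates to $\phi(t)\leq e^{-2\lambda(t-s)}\phi(s)$, i.e.\ the stated decay of $d$. Uniqueness from a common starting point is then the case $\phi(s)=0$.

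For the energy identity I would first prove that every $\evilam$-curve $u$ is a $2$-curve of maximal slope, i.e.\ $(f\circ u)'\leq-\tfrac12|u'|^2-\tfrac12\localslope^2\circ u$ almost everywhere on $(0,+\infty)$. The upper bound $-(f\circ u)'\leq\localslopeu\,|u'|\leq\tfrac12|u'|^2+\tfrac12\localslope^2\circ u$ is automatic because $\localslope$ is a strong upper gradient (Proposition~\ref{prop:local_slope_is_a_strong_upper_gradient}); the content lies in the reverse dissipation bound, which must come from the EVI. Testing the EVI against points $V\to u(t)$ together with the elementary estimate $\tfrac12\frac{d}{dt}^+d^2(u(t),V)\geq-|u'|(t)\,d(u(t),V)$ already gives $\localslope(u(t))\leq|u'|(t)$, and the complementary lower bound on $-(f\circ u)'$ forces equality throughout, so that $u$ is a $2$-curve of maximal slope and $f\circ u$ is non-increasing and (locally absolutely) continuous. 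Granting this, Proposition~\ref{prop:p_cms_energy_identity} supplies the $\ms{L}$-a.e.\ identity $(f\circ u)'=-|u'|^2=-\localslope^2\circ u$; and since the function is $(2,\lambda)$-convex (i.e.\ $p_0=2$), Theorem~\ref{thm:regularizing_effects} applies with $p=2$ in both cases $\lambda\geq0$ and $\lambda<0$, yielding for every $t>0$ the existence of the right limits $|u'_+|(t),(f\circ u)'_+(t)$, their coincidence \eqref{eq:regularizing_right_energy_identity}, and the almost right-continuity \eqref{eq:regularizing_right_approximate_cont}. The full right-continuity on $(0,+\infty)$ asserted here for every $\lambda\in\mb{R}$ then follows from the right-continuity of the one-sided derivative of the semiconvex map $s\mapsto f\circ\tu(s)$, which is $(2,-\lambda^-)$-convex on unit intervals by Lemma~\ref{lem:convexity_of_f_circ_u_tilde}, transported back through the locally bi-Lipschitz arc-length parameter.

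The main obstacle is precisely the reverse dissipation estimate in the passage from EVI to curves of maximal slope: the one-sided EVI readily yields only $\localslope(u(t))\leq|u'|(t)$, while closing the energy balance requires a sharp lower bound on $-(f\circ u)'$, obtained classically by integrating the EVI and optimizing over the test point $V$ using the global slope formula~\eqref{eq:global_formula}. A secondary, more technical point is the rigorous justification of the Dini sum rule in the contraction step, which rests on the local Lipschitz regularity of $\evilam$-curves on $(0,+\infty)$.
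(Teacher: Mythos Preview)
The paper does not prove this theorem; it is quoted from \cite[Theorem~3.5]{muratori2020108347} as a black box and used only to feed into the applications in Section~\ref{subsec:applications}. There is thus no in-paper argument to compare your proposal against.

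On the contraction part, your symmetric-testing argument is the standard one and is essentially correct. Note, though, that you justify the Dini sum rule by appealing to local Lipschitz regularity of $\evilam$-curves on $(0,+\infty)$; in the original reference that regularity is itself a consequence of the EVI, so relying on it at this stage is logically out of order. The usual remedy is to work with the integrated form of the EVI (or with a distributional Gr\"onwall lemma for upper Dini derivatives of continuous functions), which avoids the regularity issue entirely.

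On the energy identity, there is a genuine gap. Your plan is to reduce to the machinery of this paper: Proposition~\ref{prop:local_slope_is_a_strong_upper_gradient} (so that $\localslope$ is a strong upper gradient), Theorem~\ref{thm:regularizing_effects}, and Lemma~\ref{lem:convexity_of_f_circ_u_tilde}. Every one of these results has $(p_0,\lambda)$-convexity of $f$ as a standing hypothesis, and the theorem you are asked to prove assumes only that $u$ is an $\evilam$-curve, nothing about convexity of $f$. Your sentence ``since the function is $(2,\lambda)$-convex (i.e.\ $p_0=2$)'' is therefore unjustified. It is true that the existence of a full $\evilam$-\emph{flow} forces geodesic $\lambda$-convexity of $f$ under additional structural assumptions on the space, but that is a separate theorem of \cite{muratori2020108347}, requires more than the mere existence of a single $\evilam$-curve, and invoking it here either imports hypotheses not present in the statement or is circular. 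In the original reference the energy identity, the existence of right limits, and their right-continuity are obtained directly from the variational inequality (via the integrated EVI, monotonicity of $t\mapsto e^{\lambda t}\localslope(u(t))$, and a right-slope computation), with no appeal whatsoever to convexity of $f$. Your route through this paper's $(p_0,\lambda)$-convex machinery cannot substitute for that.
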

Note that, from the energy identity in the above theorem, for any $v_0 \in D(f)$ an $\evilam$-curve starting from $v_0$ is a $2$-curves of maximal slope starting from $v_0$ defined on $[0, +\infty)$.

An $\evilam$-flow is a family of continuous maps $\{S_t \colon D(f) \to D(f)\}_{t\geq0}$ satisfying that, for any $v_0 \in D(f)$, 
$S_0[v_0] = v_0$
and the map $t \in [0, +\infty) \mapsto u(t) \coloneqq S_t[v_0]$ is an $\evilam$-curve.
Notably, it has been shown that
\begin{align}
    \label{cond:evi_lambda_exists}
    \text{an }\evilam\text{-flow } \{S_t\}_{t\geq0} \text{ exists} \tag{EVI$_\lambda$}
\end{align}
for several interesting settings listed below.
\begin{itemize}[nolistsep, leftmargin=*]
    \item any $\lambda$-convex function along geodesics on connected and complete smooth Riemannian manifolds (see e.g.~\cite{villani2008optimal}), where a geodesic is a curve $\gamma $ from $[0, 1]$ to $\ms{S}$ such that $d(\gamma_t, \gamma_s) = d(\gamma_0, \gamma_1)|s-t|$ for every $s , t \in [0, 1]$ with $s \leq t$;
    \item any $\lambda$-convex function along geodesics on complete CAT$(0)$ spaces in~\cite{Mayer1998GradientFO}, moreover on complete CAT$(k)$-spaces with $k \in \mb{R}$ in~\cite{ohtapalfia2017};
    \item any $\lambda$-convex function along \textit{generalized geodesics} (see~\cite[Definition~9.2.4]{ambrosioGradientFlowsMetric2008}) on 2-Wasserstein spaces over separable Hilbert spaces in~\cite{ambrosioGradientFlowsMetric2008};
    \item the relative entropy functional on 2-Wasserstein spaces over connected and complete Riemannian manifolds in~\cite{erbar2010};
    \item any continuous $\lambda$-convex functions along geodesics, satisfying an appropriate lower bound condition, on locally compact complete RCD$(K, +\infty)$ spaces for some $K\in \mb{R}$ in~\cite{strum2018gradflow}.
\end{itemize}
We can also find a result in~\cite{savare07diffusionsemi} (without a proof), stating that \eqref{cond:evi_lambda_exists} holds for any $\lambda$-convex function along geodesics on positively curved Alexandrov spaces or 2-Wasserstein spaces over them.
See~\cite[Introduction and Section~3.4]{muratori2020108347} for more examples.

We recall the following unique existence result shown by Muratori and Savar\'e in~\cite[Theorem~4.2]{muratori2020108347}:
\begin{thm}[\cite{muratori2020108347}]
    \label{thm:muratori_uniqueness_2_cms}
    Suppose that $(\ms{S}, d)$ and $f$ satisfy \eqref{cond:evi_lambda_exists} for some $\lambda \in \mb{R}$.
    Then for any $a \in (0, +\infty]$ there exists the unique $2$-curve of maximal slope starting from $u_0$ defined on $[0, a)$.
\end{thm}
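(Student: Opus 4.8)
The plan is to split the statement into an existence part, which is immediate from the hypothesis, and a uniqueness part, which I would reduce to identifying every $2$-curve of maximal slope with the given EVI-flow. For existence, the EVI$_\lambda$-flow furnishes the curve $u(t):=S_t[u_0]$, which is an $\evilam$-curve starting from $u_0$; as already observed after the energy identity for EVI-curves above, such a curve is a $2$-curve of maximal slope on $[0,+\infty)$, and its restriction to $[0,a)$ belongs to $\text{CMS}_2([0,a);u_0)$. Hence existence holds for every $a\in(0,+\infty]$, and only uniqueness remains.

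For uniqueness I would establish the following \emph{Key Claim}: every $u\in\text{CMS}_2([0,a);u_0)$ satisfies the evolution variational inequality on $(0,a)$, i.e.\ $\frac{d}{dt}^+ d^2(u(t),V)+\frac{\lambda}{2}d^2(u(t),V)\le f(V)-f(u(t))$ for all $t\in(0,a)$ and $V\in D(f)$. Granting this, adding this inequality to the EVI satisfied by $S_\cdot[u_0]$ and running the same doubling-of-variables computation that yields the $\lambda$-contraction gives $d(u(t),S_t[u_0])\le e^{-\lambda t}d(u_0,u_0)=0$ on $[0,a)$, so $u=S_\cdot[u_0]$ and the element is unique. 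To prove the Key Claim I would fix $t\in(0,a)$ and $V\in D(f)$ and feed in two ingredients: the regularizing effects of Theorem~\ref{thm:regularizing_effects}, which (for $p=2$) guarantee that the right metric derivative and right slope exist and satisfy the energy identity $|u'_+|(t)=\localslopeu(t)$ together with $(f\circ u)'_+(t)=-\localslope^2(u(t))$; and the $(p_0,\lambda)$-convexity with $p_0=2$, which supplies a curve $\gamma\colon[0,1]\to\ms{S}$ from $\gamma_0=u(t)$ to $\gamma_1=V$ with $d(\gamma_\theta,\gamma_0)\le\theta\,d(u(t),V)$ and $f(\gamma_\theta)\le(1-\theta)f(u(t))+\theta f(V)-\lambda\theta(1-\psi(\theta))d^2(u(t),V)$. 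Differentiating this convexity inequality at $\theta=0$ and combining it with the global formula for the slope (Proposition~\ref{prop:local_slope_is_a_strong_upper_gradient}) links the rate of descent of $f$ along $\gamma$ toward $V$ with $\localslope(u(t))=|u'_+|(t)$, which is exactly the quantity needed to convert the first variation of $t\mapsto d^2(u(t),V)$ into the desired one-sided inequality.

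The main obstacle is precisely this last conversion. A generic $2$-curve of maximal slope comes only with the crude bound $\frac{d}{dt}^+ d^2(u(t),V)\le 2\,d(u(t),V)\,|u'_+|(t)$, whose positivity makes it useless for the EVI; to obtain the sharp one-sided estimate one must extract \emph{directional} information, namely that the right velocity of $u$ points along the steepest descent of $f$ and that, by $(2,\lambda)$-convexity, this descent is compatible with moving toward $V$. Making this rigorous in a bare metric space without geodesics is the delicate heart of the argument: it amounts to the metric implication ``curve of maximal slope $\Rightarrow$ EVI-solution'', it requires coupling the convexity inequality for $\gamma$ with the energy identity while controlling the remainder, and in particular it requires handling the correction $\psi(\theta)$ as $\theta\downarrow0$ so that the $\lambda d^2$ term emerges with the correct constant. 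I expect this step to consume the bulk of the work and all the genuine hypotheses (existence of the EVI-flow and $(2,\lambda)$-convexity); once it is in place, the contraction estimate closes uniqueness immediately.
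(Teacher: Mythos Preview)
The paper does not prove this theorem: it is quoted from \cite{muratori2020108347} and used as a black box, so there is no ``paper's own proof'' to compare against. That said, your plan has a genuine gap that is worth naming.

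Your Key Claim is that every $u\in\text{CMS}_2([0,a);u_0)$ satisfies the EVI. The sketch you give cannot close this. Differentiating the convexity inequality for $\gamma$ at $\theta=0$ yields at best the global slope formula
\[
-\localslope(u(t))\,d(u(t),V)\ \le\ f(V)-f(u(t))-\lambda\,d^2(u(t),V),
\]
while the only metric first-variation estimate available for $t\mapsto d^2(u(t),V)$ is the triangle-inequality bound
\[
\tfrac{d}{dt}^+\tfrac12 d^2(u(t),V)\ \le\ d(u(t),V)\,|u'_+|(t)\ =\ d(u(t),V)\,\localslope(u(t)).
\]
These two inequalities have \emph{opposite} signs on the term $d(u(t),V)\,\localslope(u(t))$ and therefore do not combine into the EVI; this is precisely the ``directional information'' you say must be extracted, and nothing in your toolkit (slope, energy identity, convexity along $\gamma$) supplies it in a bare metric space. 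The implication ``curve of maximal slope $\Rightarrow$ EVI'' is in fact not a purely metric consequence of $(2,\lambda)$-convexity, and Muratori--Savar\'e do not argue this way: their uniqueness proof exploits the assumed EVI-flow itself as a comparison object (via contraction and the semigroup structure against the given CMS), rather than upgrading an arbitrary CMS to an EVI solution first.

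A secondary issue: you invoke Theorem~\ref{thm:regularizing_effects} and $(2,\lambda)$-convexity of $f$, but the hypothesis of the statement is only that an $\evilam$-flow exists. Deducing $\lambda$-convexity from the existence of the flow is itself a nontrivial result of \cite{muratori2020108347} (and requires a suitable geodesic structure), so at a minimum this step would have to be justified before the rest of your argument can even start.
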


Combining Theorem~\ref{thm:muratori_uniqueness_2_cms} with our uniqueness results -Corollaries~\ref{cor:uniqueness_p_cms_lambda_gt_0} and \ref{cor:uniqueness_p_cms_lambda_st_0}-, we get the unique existence of $p$-curves of maximal slope even if $p\neq2$:
\begin{thm}[Unique existence of $p$-curves of maximal slope; the case $\lambda \geq 0$]
    \label{thm:application_unique_existence_p_cms_lambda_gt_0}
    Suppose that $f$ is $(p_0, \lambda_0)$-convex for some $(p_0, \lambda_0)$ satisfying $\lambda_0 = 0$ and $p_0 \in (1, +\infty)$, and $(\ms{S}, d)$ and $f$ satisfy \eqref{cond:evi_lambda_exists} for some $\lambda \in \mb{R}$.
    Then for all $p \in (1, +\infty)$ there exists the unique $p$-curve of maximal slope starting from $u_0$ defined on $[0, +\infty)$.
\end{thm}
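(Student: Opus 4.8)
The plan is to assemble the theorem from the two pillars already in place: the uniqueness of $2$-curves of maximal slope supplied by the existence of an $\evilam$-flow, and the exponent-transfer of uniqueness supplied by Corollary~\ref{cor:uniqueness_p_cms_lambda_gt_0}. The guiding observation is that the two hypotheses play disjoint roles. The $(p_0,\lambda_0)$-convexity with $\lambda_0=0$ is precisely what activates the $\lambda=0$ transformation machinery, whereas the assumption \eqref{cond:evi_lambda_exists} (for a possibly different, possibly negative $\lambda$) is used only to seed uniqueness at the single exponent $p=2$. Keeping these two $\lambda$'s conceptually separate is what makes the combination legitimate.

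First I would record that, since $(\ms{S},d)$ and $f$ satisfy \eqref{cond:evi_lambda_exists} for some $\lambda\in\mb{R}$, Theorem~\ref{thm:muratori_uniqueness_2_cms} applies and yields, for every $a\in(0,+\infty]$, a unique $2$-curve of maximal slope starting from $u_0$ on $[0,a)$. Specializing to $a=+\infty$ gives $\#\pcms{2}{+\infty}=1$; this single statement encodes both existence (the $\evilam$-curve itself is a $2$-curve of maximal slope on $[0,+\infty)$ by the energy identity recorded for $\evilam$-curves) and uniqueness, which is exactly the input the next step requires.

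Next I would invoke Corollary~\ref{cor:uniqueness_p_cms_lambda_gt_0} with reference exponent $p=2$. Its hypotheses hold because $f$ is $(p_0,\lambda_0)$-convex with $\lambda_0=0$ and $p_0\in(1,+\infty)$. The corollary furnishes, for each $p'\in(1,+\infty)$, a canonical bijection $\Lambda\colon\pcms{2}{+\infty}\to\pcms{p'}{+\infty}$, and by its final clause the equality $\#\pcms{2}{+\infty}=1$ forces $\#\pcms{p'}{+\infty}=1$ for all $p'\in(1,+\infty)$. Renaming $p'$ as $p$ delivers the asserted unique existence.

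I do not expect a genuine analytic obstacle here, since both constituent results are already available; the proof is an assembly rather than a new estimate. The only points demanding care are bookkeeping ones: I would verify that $a=+\infty$ is admissible in Theorem~\ref{thm:muratori_uniqueness_2_cms} (it is, as that theorem ranges over all $a\in(0,+\infty]$), so that the seed uniqueness lives on the unbounded interval expected by Corollary~\ref{cor:uniqueness_p_cms_lambda_gt_0}, and I would make explicit that the $\lambda$ of the EVI condition need not equal the $\lambda_0=0$ of the convexity hypothesis and hence does not interfere with the $\lambda=0$ transformation results.
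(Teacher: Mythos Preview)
Your proposal is correct and matches the paper's own argument, which simply states that the theorem follows by combining Theorem~\ref{thm:muratori_uniqueness_2_cms} with Corollary~\ref{cor:uniqueness_p_cms_lambda_gt_0}. Your additional remarks about keeping the two parameters $\lambda$ and $\lambda_0$ conceptually separate, and about checking that $a=+\infty$ is admissible, are accurate clarifications of points the paper leaves implicit.
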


\begin{thm}[Unique existence of $p$-curves of maximal slope; the case $\lambda < 0$]
    \label{thm:application_unique_existence_p_cms_lambda_st_0}
    Suppose that $f$ is $(p_0, \lambda_0)$-convex for some $(p_0, \lambda_0)$ satisfying $\lambda_0 < 0$ and $p_0 \in [2, +\infty)$, and $(\ms{S}, d)$ and $f$ satisfy \eqref{cond:evi_lambda_exists} for some $\lambda \in \mb{R}$.
    Then for all $p \in (1, p_0]$ there exists the unique $p$-curves of maximal slope starting from $u_0$ defined on $[0, T_p)$ and
    \begin{align*}
        T_p = \sup\{a \in (0, +\infty] \mid \# \pcms{p}{a} \geq 1\},
    \end{align*}
    where $T_p$ is as in Corollary~\ref{cor:uniqueness_p_cms_lambda_st_0}.
\end{thm}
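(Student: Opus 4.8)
The plan is to use the exponent $p=2$ as a pivot: I will feed the unique existence statement for $2$-curves of maximal slope, which is available from the theory of $\evilam$-flows, into the parameter-transformation uniqueness result of Corollary~\ref{cor:uniqueness_p_cms_lambda_st_0}. Since $p_0 \in [2, +\infty)$, the exponent $2$ lies in the admissible range $(1, p_0]$, so it is a legitimate reference exponent for the machinery developed above.

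First I would observe that the hypothesis \eqref{cond:evi_lambda_exists} lets us apply Theorem~\ref{thm:muratori_uniqueness_2_cms}, which yields $\#\pcms{2}{a} = 1$ for every $a \in (0, +\infty]$. This is precisely the uniqueness assumption~\eqref{ass:uniqueness_p_cms_for_all_a} required by Corollary~\ref{cor:uniqueness_p_cms_lambda_st_0}, with the reference exponent taken to be $2$. Note that the two standing hypotheses play distinct roles here: the $(p_0, \lambda_0)$-convexity of $f$ is what makes the parameter transformations of Lemma~\ref{lem:param_trans_domain_possibly_bounded} and Corollary~\ref{cor:uniqueness_p_cms_lambda_st_0} available, whereas \eqref{cond:evi_lambda_exists} is used solely to secure the base case $p=2$.

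Next I would invoke Corollary~\ref{cor:uniqueness_p_cms_lambda_st_0} with reference exponent $2$ and target exponent $p \in (1, p_0]$. This produces the value $T_p = \sup\{a \in (0, +\infty] \mid \#\pcms{p}{a} \geq 1\}$ together with a bijective map $\Lambda \colon \pcms{2}{+\infty} \to \pcms{p}{T_p}$. Since the case $a = +\infty$ of Theorem~\ref{thm:muratori_uniqueness_2_cms} gives $\#\pcms{2}{+\infty} = 1$, the bijectivity of $\Lambda$ forces $\#\pcms{p}{T_p} = 1$. This simultaneously delivers existence (the codomain is nonempty) and uniqueness of the $p$-curve of maximal slope on $[0, T_p)$, and the displayed formula for $T_p$ is read off directly from the corollary.

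The argument carries no serious technical obstacle of its own: all the analytic difficulty resides inside Corollary~\ref{cor:uniqueness_p_cms_lambda_st_0}, whose proof already handles the delicate possibility that a $p$-curve blows up in finite rescaled time. The only point demanding care is the bookkeeping of which exponent serves as reference and which as target when matching the notation of the corollary to that of the theorem; once $2$ is fixed as the pivot and $p$ as the target, the proof reduces to a direct citation of the two results.
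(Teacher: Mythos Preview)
Your proposal is correct and matches the paper's own approach: the paper does not give a standalone proof of this theorem but simply states that it follows by combining Theorem~\ref{thm:muratori_uniqueness_2_cms} with Corollary~\ref{cor:uniqueness_p_cms_lambda_st_0}, which is exactly the pivot-at-$p=2$ argument you describe. Your observation that $p_0 \ge 2$ guarantees $2 \in (1, p_0]$, so that the reference exponent is admissible, is the one small check the paper leaves implicit.
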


\begingroup
\setlength\bibitemsep{0pt}
\printbibliography
\endgroup
{\small
 (Sho Shimoyama), \textsc{Graduate School of Mathematical Sciences, The University of Tokyo, Tokyo, Japan}\\
\textit{Email address}: \href{mailto:sho-shimoyama@g.ecc.u-tokyo.ac.jp}{sho-shimoyama@g.ecc.u-tokyo.ac.jp}
}

\end{document}